\documentclass[hidelinks,onefignum,onetabnum]{siamart250211}

\usepackage[utf8]{inputenc}

\usepackage[english]{babel}

\usepackage{amssymb}
\usepackage{braket,amsfonts}
\usepackage{mathrsfs}
\usepackage{mathtools}
\usepackage{enumerate}
\usepackage{enumitem}
\usepackage{bbm}
\usepackage{hyperref}
\usepackage{extarrows}
\usepackage{comment}
\usepackage{soul}
\usepackage{xcolor}
\usepackage{graphicx}
\usepackage[export]{adjustbox} 
\usepackage{cleveref} 
\usepackage{cite}

 \newtheorem{assump}{Assumption}
\newenvironment{myassump}[2][]
  {%
   \renewcommand\theassump{#2}%
   \if\relax\detokenize{#1}\relax
     \begin{assump}%
   \else
     \begin{assump}[#1]%
   \fi
  }
  {\end{assump}}

\newtheorem{claim}{Claim}
\newtheorem{remark}{Remark}

\usepackage{scalerel}

\theoremstyle{empty}

\DeclareMathOperator*{\argmin}{arg\,min}

\newcommand{\R}{\mathbb{R}}
\def\P{{\mathcal P}}
\newcommand{\bR}{\mathbf{R}}
\newcommand{\bC}{\mathbf{C}}
\newcommand{\bP}{\mathbf{P}}
\newcommand{\bB}{\mathbf{B}}
\newcommand{\bb}{\mathbf{b}}
\newcommand{\bc}{\mathbf{c}}
\newcommand{\bp}{\mathbf{p}}
\newcommand{\bd}{\mathbf{d}}
\newcommand{\bA}{\mathbf{A}}
\newcommand{\bK}{\mathbf{K}}
\def\bf{\mathbf{f}}
\def\bg{\mathbf{g}}
\newcommand{\bu}{\mathbf{u}}

\newcommand{\bv}{\mathbf{v}}

\newcommand{\bw}{\mathbf{w}}
\newcommand{\bx}{\mathbf{x}}
\newcommand{\by}{\mathbf{y}}
\newcommand{\bz}{\mathbf{z}}
\newcommand{\bnu}{\boldsymbol{\nu}}
\newcommand{\bsigma}{\boldsymbol{\sigma}}
\newcommand{\bGamma}{\boldsymbol{\Gamma}}
\newcommand{\bgamma}{\boldsymbol{\gamma}}
\newcommand{\la}{\langle}
\newcommand{\ra}{\rangle}

\usepackage{amsmath, amssymb}
\usepackage{bbm} 
\usepackage{tikz}
\usetikzlibrary{positioning, arrows.meta, calc, shapes.misc}

\author{
  Katy Craig\thanks{Department of Mathematics, University of California,
    Santa Barbara, (\email{kcraig@ucsb.edu}).}
  \and
  Benjamin Faktor\thanks{Department of Mathematics, University of California,
    Los Angeles, (\email{benjaminfaktor@math.ucla.edu}).}
  \and
  Benjamin Nachman\thanks{Department of Particle Physics and Astrophysics, Stanford University, and Fundamental Physics Directorate, SLAC National Accelerator Laboratory (\email{nachman@stanford.edu}).}
}

\begin{document}

\title{Unfolding with a Wasserstein Loss } 
\maketitle

\tableofcontents

\begin{abstract}
Data unfolding --- the removal of noise or artifacts from
measurements --- is a fundamental task across the experimental sciences. Of particular interest in the present work are applications of data unfolding in  physics, in which context the dominant approach is Richardson-Lucy (RL) deconvolution. The classical RL approach  aims to find   denoised data that, once passed through the noise model,  is as close as possible to the measured data, in terms of   Kullback--Leibler (KL) divergence. Fundamental to this approach is the hypothesis that the support of the measured data overlaps with the output of the noise model, so that the KL divergence correctly captures their similarity. In practice, this hypothesis is typically enforced by binning the measured data and noise model, introducing numerical error into the unfolding process.

As a counterpoint to classical binned methods for unfolding, the present work studies an alternative formulation of the unfolding problem, using a Wasserstein loss instead of the KL divergence to quantify the similarity between the measured data and the output of the noise model.   We establish sharp conditions for 
existence and uniqueness of optimizers; as a consequence we answer open questions of Li, et al.  \cite{Li25},  regarding   necessary conditions for  uniqueness in the case of transport map noise models. Following these theoretical results, we then develop a provably convergent
generalized Sinkhorn algorithm to compute approximate optimizers. Our algorithm requires only empirical observations of the noise model and measured data and scales
with the size of the data, rather than the ambient dimension.  Numerical
experiments on one- and two-dimensional problems inspired by jet mass
unfolding in particle physics demonstrate that the optimal transport
approach offers robust, accurate performance compared to classical
Richardson--Lucy deconvolution, particularly when binning artifacts are
significant.\end{abstract}

\begin{keywords}
optimal transport, Wasserstein distance, data unfolding, deconvolution 
\end{keywords}

\begin{MSCcodes}
49J45, 65K10, 62G07, 28A33
\end{MSCcodes}

\section{Introduction} 

\emph{Data unfolding} is a fundamental problem across the   sciences, in which one seeks to remove noise or   artifacts from data. The ``true''   denoised data  can then be   compared with the output of other experiments or numerical simulations. 
Given a measurable space $\mathcal{X}$ and $\mathcal{Y} = \mathbb{R}^d$, an  unfolding problem is specified in terms of three components: a noise operator $N: \mathcal{P}(\mathcal{X}) \to \mathcal{P}(\mathcal{Y})   $ that sends true data $\sigma \in \mathcal{P}(\mathcal{X})$ to   corresponding noisy measurements $N(\sigma)   \in \mathcal{P}(\mathcal{Y})$, the measured data $ {\nu} \in \mathcal{P}(\mathcal{Y})$, and a  loss function $D:   \mathcal{P}(\mathcal{Y}) \times \mathcal{P}(\mathcal{Y}) \to \R \cup \{+\infty\}$.  
One aims to solve
\begin{equation}
\label{general denoising problem}
\sigma_* \in \argmin_{\sigma } D(N(\sigma), \nu)
\end{equation}
The measure $\sigma_* \in \mathcal{P}(\mathcal{X})$ represents true data that best agrees with   observations ${\nu}$.

Of particular interest in the present work are applications to particle, nuclear, and astro physics, such as at the Large Hadron Collider, where unfolding is a critical step in measuring differential cross sections --- reaction rates that connect the data to quantum field theory predictions. In these applications, the primary role of data unfolding is to correct observations from distortions due to detector effects. The most common choice of loss function in this context is the KL divergence,
 \begin{align} \label{KLdef}
D_{\rm KL}(N(\sigma),  {\nu}) = {\rm KL}( {\nu} | N(\sigma)) 
\end{align}
and $\sigma_*$ is known as the \emph{non-parametric maximum likelihood estimator} (MLE). 
Provided that there exists  a reference measure $m \in \mathcal{P}(\mathcal{Y})$ so that  
\begin{align} \label{KLnecessities}    {\nu}   \ll N(\sigma) \ll m  \text{ and } {\rm KL}({\nu}|m)<+\infty,
\end{align} 
minimizing  $D_{\rm KL}$ is equivalent to minimizing  
$D_{\rm MLE}(N(\sigma),  {\nu}) = - \int \log \left( \frac{d N(\sigma)}{ d m} \right) d  {\nu}   .
$
Furthermore, when the measured data is empirical, ${\nu} = \frac{1}{K} \sum_{k=1}^K \delta_{\bar{y}_k}$,   $D_{\rm MLE}$ is the negative log-likelihood 
\begin{align} \label{MLEN}
D_{\rm MLE} \left(N(\sigma), \frac{1}{K} \sum_{k=1}^K \delta_{\bar{y}_k} \right) = -\frac{1}{K} \sum_{k=1}^K \log \left( \left(\frac{d N(\sigma)}{ d m} \right)(\bar{y}_k) \right).
\end{align} The standard approach to minimizing $D_{\rm MLE}$  is via Expectation-Maximization (EM), also known as Iterative Bayesian Unfolding (IBU) or Richardson-Lucy (RL) deconvolution \cite{dempster1977maximum, richardson1972bayesian, lucy1974iterative}.

To solve the unfolding problem (\ref{general denoising problem}), a   method must not only be well-suited to  the    loss function but also to   the noise operator. A classical noise operator in the case $\mathcal{X}=\mathcal{Y}$ is the mixture model with respect to  $m \in \P(\mathcal{Y})$,
\begin{align} \label{GMM}
N(\sigma) := \varphi *\sigma d m , \quad \varphi*\sigma(\cdot) : = \int \varphi(\cdot-x) d \sigma(x) , \quad \text{ for } \varphi \in C_b(\mathcal{Y})  \text{ fixed.}
\end{align}
In contrast, in particle physics, the noise operator is often only known   through simulation and is  modeled by a Markov kernel $\rho: \mathcal{X} \to \mathcal{P}(\mathcal{Y})$, where ``true'' signal at location $x$, once passed through the noise model, leads to measurements distributed as $\rho_x \in \P(\mathcal{Y})$. This more general class of noise models will be the focus of the present work. In particular,  we take   $N(\sigma) := \nu_\sigma$, where $\nu_\sigma$ satisfies
\begin{align} \label{Markovnoise}
\int f(y) d \nu_\sigma(y) = \int_\mathcal{X} \left( \int_\mathcal{Y} f(y) d \rho_x(y) \right) d \sigma(x) , \quad \forall f \in C_b(\mathcal{Y}).
\end{align}

In spite of the practical success of the KL divergence as a loss function for the unfolding problem, in order for the equivalence between  minimizing (\ref{KLdef}) and  (\ref{MLEN}) to hold, which is necessary if one seeks to compute the optimizer via the classical EM approach, one must find a   reference measure $m$ satisfying  the hypotheses (\ref{KLnecessities}).   In most particle physics applications, $m$ is chosen to be an empirical measure supported on a grid, $m = \frac{1}{M} \sum_{i=1}^M \delta_{y_i}$, and both the measured data $ {\nu}$ and   noise model $N$ are binned to enforce (\ref{KLnecessities}). However, this approach has   major limitations.  Experiments must coordinate on the bin centers ahead of time and derivative measurements, such as distribution moments, carry significant biases from binning.
%
%
Furthermore, since the computational complexity of grid-based methods scales exponentially with the dimension, to make the problem computationally tractable in practice,   $\nu$ and $N(\sigma)$ are typically replaced with lower dimensional projections,  throwing away additional information.

A recent approach to overcoming these limitations in particle physics, known as OmniFold,  uses machine-learning models to approximate the density of ${\nu}$ with respect to $N(\sigma)$, based on empirical observations, and takes these as input to the classical EM algorithm \cite{andreassen2020omnifold, Andreassen:2021zzk}. Subsequent work has incorporated the (sliced) Wasserstein distance into this framework for the reweighting step \cite{pan2024swdfold}, considered non-iterative (and thus non-EM) variants \cite{Ore:2026qgp}, along with other modifications \cite{Arratia:2021otl,Huetsch:2024quz}. However, all such methods are fundamentally based on the KL loss (\ref{KLdef}).  While the original OmniFold has been used for a growing number of measurements across experiments~\cite{Canelli:2025ybb}, fundamental challenges remain.  For example, without significant overlapping support between the observed data $\nu$ and forward-folded estimates $N(\sigma)$,   the KL loss is uninformative.  Furthermore, 
the black-box nature of the machine-learning approximation makes  theoretical guarantees   challenging. 

The goal of the present work is to overcome these limitations  by considering an alternative loss function, given by the $p$-Wasserstein metric,
\begin{align} \label{Wploss}
D_{W_p}(N(\sigma),  {\nu}) = W_p^p(N(\sigma),  {\nu}) , \quad  p \geq 1.
\end{align} 
This loss function removes the requirement that $\nu$ be absolutely continuous with respect to $N(\sigma)$, as in the KL case. Furthermore, it quantifies the difference between $N(\sigma)$ and $\nu$ in a way that captures the geometry of the distribution of mass in the underlying space $\mathcal{Y}$, rather than just the differences in relative  magnitude where $N(\sigma)$ and $\nu$ overlap, as with KL. This distinction is especially important in particle physics applications, where the underlying space $\mathcal{Y}$ often has a strong geometric interpretation---for example, the location at which particles hit the detector.

The main contributions of the present work are to establish existence and uniqueness of solutions to the   unfolding problem \eqref{general denoising problem} with $p$-Wasserstein loss and develop a numerical method to   approximate minimizers.  For noise models given by Markov kernels, we give sharp conditions under which solutions  exist (Theorem \ref{thm:existence}) and are unique (Theorem \ref{thm:uniqueness}). The conditions themselves are natural in the context of optimal transport, and  the main novelty  is the demonstration of sharpness. Next,   we develop a provably convergent generalized Sinkhorn algorithm for   approximating minimizers, which merely requires empirical observations of the Markov kernel, rather than an explicit functional form.

\subsection{Relation to Previous Work}
Several previous works have considered the role of $p$-Wasserstein distances in unfolding problems. The first line of work in this direction arises in the study of statistical estimators. Bassetti, Bodini, and Regazzini \cite{bassetti2006minimum} considered the Wasserstein loss (\ref{Wploss}) in the case when the measured data $\nu$ is an empirical observation of an underlying continuum measure of the form $\tilde{\nu} = N(\sigma_\text{true})$. The authors developed sufficient conditions for existence of an optimizer, which they called the \emph{minimal Kantorovich estimator}, as well as its consistency as the empirical observations of the measured data converge to $\tilde{\nu}$. Subsequent work in this line of inquiry considered the case when the state space $\mathcal{X}$ is finite. Bernton et al. \cite{bernton2019parameter}   succeeded in  removing the hypothesis $\tilde{\nu} = N(\sigma_\text{true})$, again developing sufficient conditions for existence and consistency. Bernton, et al. also introduced a numerical approach for computing the minimizer based on Monte-Carlo expectation maximization.  Finally, the  Wasserstein loss likewise arises in Garc\'ia-Trillos, Jaffe, and Sen's work on the Cram\'er-Rao theory of unbiased estimation, in which case they refer to the solution of the unfolding problem as  the \emph{Wasserstein projection estimator} (WPE) \cite{trillos2025wasserstein}. Under sufficient regularity hypotheses on the noise model, the authors show that the WPE is \emph{asymptotically sensitivity-efficient}. 

A second line of research that considered $p$-Wasserstein distances in unfolding problems arises in the context of learning generative models. Assuming the noise operator is of the form $N(\sigma) = g_\sigma \# \sigma$, where $g_\sigma$ is a differentiable function parametrized by $\sigma$, such as a neural network, Genevay, Peyr\'e, and Cuturi developed a numerical scheme for approximating optimizers via auto-differentiation  \cite{genevay2018learning}. In subsequent work, the same authors observed that an alternative numerical scheme, based on a dual formulation of the Wasserstein metric and  the ansatz that the Kantorovich potential belongs to a parametric class, leads to the Wasserstein GAN problem \cite{genevay2017gan}. Recent work by Akyildiz, Girolami, Vadeboncouer, and Stuart \cite{vadeboncoeur2025efficient, akyildiz2025efficient} consider the case of noise models given by the pushforward via a transport map and convolution with a mollifier, $N(\sigma) = \varphi* (t \# \sigma)$. When the measured data is concentrated at a single point $\nu = \delta_y$ for $y \in \mathcal{Y}$, minimizers of the unfolding problem are characterized, and for general $\nu$, a numerical method is developed using autodifferentiation to perform gradient descent on the sliced Wasserstein metric. 

The present work differs from these previous works in two respects. First, we study  existence as well as uniqueness of solutions for general measured data $\nu \in \mathcal{P}(\mathbb{R}^d)$ and an arbitrary Polish space $\mathcal{X}$, so that so that (\ref{general denoising problem}) is an optimization problem over an infinite dimensional space of measures. Furthermore, beyond developing sufficient conditions for solutions to exist and be unique, we show that our conditions are sharp, in the sense that the result fails in general if any condition is removed. A second key difference of the present work is that  our numerical approach, based on a generalized Sinkhorn algorithm, computes an approximate minimizer via alternating Bregman projections. A major benefit of this perspective is that it is well-suited to the empirically observed measures and noise models arising in particle physics applications, without requiring additional hypotheses that the measures, noise models, or Kantorovich potentials belong to a prescribed parametric class. Furthermore, existing results in the optimization literature guarantee convergence to a unique minimizer and provide a rate of convergence in dual variables; see Remark \ref{rate of convergence}.

Two recent works that are closely aligned with the present work are recent work by   Li, et al. \cite{li2024stochastic, Li25} and Lasserre \cite{La24}. Li, et al. investigate the   unfolding problem \eqref{general denoising problem} with  noise operator $N$ defined via pushforward, $N(\sigma) = t \# \sigma$, which we call \emph{transport map noise}. Several  choices of loss function are considered, including the $p$-Wasserstein loss. The authors develop sufficient conditions   for existence of minimizers and, under certain hypotheses on $t$, an explicit characterization of one of the minimizers. The question of uniqueness is left open. Continuing in this line of research, our Corollaries \ref{cor:existence}-\ref{cor:uniqueness} provide sharp conditions that ensure existence and uniqueness of minimizers in the case of transport map noise.

  Lasserre \cite{La24} considers the unfolding problem  \eqref{general denoising problem} with 2-Wasserstein metric and a Gaussian noise model $N(\sigma)$, where $\mathcal{X}$ is a compact set of mean and variance parameters. Lasserre characterizes the unique minimizer in terms of a moment relaxation of the unfolding problem. A special case of our main existence result, Theorem \ref{thm:existence}, generalize Lasserre's result on existence of minimizers to when $\mathcal{X}$ is an arbitrary nonempty Polish space; see Remark \ref{LasserreRemark}.

 
Finally, while the present work focuses on unfolding with a  Wasserstein loss, there are close connections with  unfolding with the KL divergence (\ref{KLdef}). Notably, Rigollet and Weed \cite{rigollet2018entropic} proved that, when the noise is given by a Gaussian mixture model---equation (\ref{GMM}) with  $\phi$ a Gaussian with standard deviation $\epsilon>0$---the \emph{maximum likelihood estimator} $\sigma$ that solves the unfolding problem with loss given by (\ref{MLEN}) is also the measure that minimizes $\sigma \mapsto W_{2,\epsilon}^2(\sigma, \nu)$, where $W_{2,\epsilon}$ is the \emph{entropically regularized} 2-Wasserstein distance; see equation \eqref{entropicWp}. However, this result is largely complementary to the present study, since our focus is on noise models given by general Markov kernels $N(\sigma) = \nu_\sigma$.

\subsection{Main Results}
Let $\mathcal{X}$ be a nonempty Polish space and $\mathcal{Y} = \mathbb{R}^{d}$. Let $\mathcal{P}(\mathcal{X})$ denote the set of Borel probability measures on $\mathcal{X}$ and $\mathcal{P}_p(\mathcal{Y})$ denote the elements of $\mathcal{P}(\mathcal{Y})$ with finite $p$th moment $M_p$.
Let $\rho:\mathcal{X} \rightarrow \mathcal{P}(\mathcal{Y})$ be a Markov kernel, i.e. such that the evaluation of every fixed Borel set is a Borel-measurable function. 
For each $\sigma \in \mathcal{P}(\mathcal{X})$, we consider the noise model $N(\sigma):= \nu_\sigma$, where $\nu_\sigma$ is given by \eqref{Markovnoise}.

Let $p \geq 1$ and $\nu \in \mathcal{P}_p(\mathcal{Y})$. We consider the denoising problem
\begin{equation}
\label{minimization problem}
\sigma_* := \arg \min_{\sigma \in \mathcal{P}(\mathcal{X})} W_p^p(\nu_\sigma, \nu).
\end{equation}
The goal of the present work is to identify sharp hypotheses on the noise model $\rho$ and the measured data $\nu$ under which solutions exist and are unique. For the noise model, we consider the following  continuity and coercivity assumptions on the Markov kernel.
\begin{myassump}{(CTY)} \label{CTYas}
$x \mapsto \rho_{x}$ is continuous in the narrow topology.
\end{myassump}

\begin{myassump}{(CRC)} \label{CRCas}
$x \mapsto M_p(\rho_{x})$ has totally bounded sublevel sets.
\end{myassump}
These assumptions are satisfied for most reasonable choices of noise models that arise in applications. For example, if $\mathcal{X}$ is finite, as is the case for collider data and the discrete setting explored in \S\ref{section:numerical method}, then trivially conditions \ref{CTYas} and \ref{CRCas} hold.  

We also identify an injectivity condition on $\rho$ that characterizes uniqueness of minimizers:

\begin{myassump}{(INJ)} \label{INJas}
$\sigma \mapsto \nu_\sigma$ is injective on 
\begin{align} \label{Adef}
\mathscr{A} := \{\sigma \in \mathcal{P}(\mathcal{X}): \sigma \text{ minimizes } \eqref{minimization problem}\}.
\end{align}
\end{myassump}

We now state our main well-posedness results. Let $\mathcal{L}^{d}$ denote the  Lebesgue measure on $\mathbb{R}^d$. We consider $\sigma_*$ to be a \textit{minimizer} of (\ref{minimization problem}) if $W_p^p(\nu_{\sigma_*},\nu) = \inf_\sigma  W_p^p(\nu_{\sigma},\nu)$, and we call (\ref{minimization problem}) \textit{feasible} if $\inf_{\sigma} W_p^p(\nu_{\sigma},\nu) <+\infty$. 

\begin{theorem}
\label{thm:existence}
If   \ref{CTYas} and \ref{CRCas} hold,
 a minimizer of \eqref{minimization problem} exists.
\end{theorem}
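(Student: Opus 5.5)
The plan is the direct method of the calculus of variations: take a minimizing sequence, extract a narrowly convergent subsequence by Prokhorov's theorem, and pass to the limit using lower semicontinuity of $W_p^p$.

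First I dispose of the infeasible case. If $\inf_\sigma W_p^p(\nu_\sigma,\nu)=+\infty$, then every $\sigma\in\mathcal{P}(\mathcal{X})$ attains the infimum. Since $\mathcal{X}$ is nonempty, $\mathcal{P}(\mathcal{X})$ is nonempty (it contains a Dirac mass), so a minimizer exists.

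In the feasible case, let $(\sigma_n)$ be a minimizing sequence with $W_p^p(\nu_{\sigma_n},\nu)\le C$ for all $n$.

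\emph{Uniform moment bound.} By the triangle inequality for $W_p$ with respect to $\delta_0$,
\begin{equation*}
M_p(\nu_{\sigma_n})^{1/p}=W_p(\nu_{\sigma_n},\delta_0)\le W_p(\nu_{\sigma_n},\nu)+M_p(\nu)^{1/p}\le C^{1/p}+M_p(\nu)^{1/p}.
\end{equation*}
Testing \eqref{Markovnoise} against the truncations $\min(|y|^p,k)$ and applying monotone convergence gives
\begin{equation*}
M_p(\nu_\sigma)=\int_{\mathcal{X}} M_p(\rho_x)\,d\sigma(x).
\end{equation*}
Hence $\sup_n\int M_p(\rho_x)\,d\sigma_n(x)\le C'<\infty$. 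The same truncation argument shows that $x\mapsto M_p(\rho_x)$ is Borel measurable.

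\emph{Tightness.} Let $K_R:=\{x: M_p(\rho_x)\le R\}$. I expect this step to be the main point requiring care. Under \ref{CTYas}, the map $x\mapsto M_p(\rho_x)$ is lower semicontinuous, because $\mu\mapsto\int|y|^p\,d\mu$ is narrowly lower semicontinuous (it is a supremum of continuous functionals $\mu\mapsto\int\min(|y|^p,k)\,d\mu$). So $K_R$ is closed. By \ref{CRCas}, $K_R$ is also totally bounded. Being closed in a complete metric space, $K_R$ is complete, and a complete totally bounded set is compact. If closedness failed for some reason, I would instead use the closure of $K_R$, which is compact by the same reasoning, and the bound below would be unchanged. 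Markov's inequality gives
\begin{equation*}
\sigma_n(\mathcal{X}\setminus K_R)\le C'/R,
\end{equation*}
so $(\sigma_n)$ is tight. By Prokhorov's theorem, a subsequence converges narrowly to some $\sigma_*\in\mathcal{P}(\mathcal{X})$.

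\emph{Passing to the limit.} Fix $f\in C_b(\mathcal{Y})$. By \ref{CTYas}, the function $x\mapsto\int f\,d\rho_x$ is continuous, and it is bounded by $\|f\|_\infty$. Narrow convergence of $\sigma_n$ together with \eqref{Markovnoise} then gives $\int f\,d\nu_{\sigma_n}\to\int f\,d\nu_{\sigma_*}$, so $\nu_{\sigma_n}\to\nu_{\sigma_*}$ narrowly. Finally, $W_p^p(\cdot,\nu)$ is lower semicontinuous under narrow convergence: optimal plans are tight, a limit of optimal plans couples the limit marginals, and the cost $|x-y|^p$ is lower semicontinuous and nonnegative. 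Therefore
\begin{equation*}
W_p^p(\nu_{\sigma_*},\nu)\le\liminf_n W_p^p(\nu_{\sigma_n},\nu)=\inf_\sigma W_p^p(\nu_\sigma,\nu),
\end{equation*}
so $\sigma_*$ is a minimizer.
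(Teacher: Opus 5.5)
Your proposal is correct and follows essentially the same route as the paper. It handles the infeasible case trivially and gets lower semicontinuity of $\sigma\mapsto W_p^p(\nu_\sigma,\nu)$ from narrow continuity of $\sigma\mapsto\nu_\sigma$ under \ref{CTYas}. It then gets compactness of a minimizing sequence from the triangle-inequality bound on $\int M_p(\rho_x)\,d\sigma_n(x)$, using that the sublevel sets of $x\mapsto M_p(\rho_x)$ are compact. Your use of Markov's inequality for tightness, and your remark that the closure of $K_R$ would suffice, differ only cosmetically from the paper's appeal to the integral form of Prokhorov's criterion.
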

\begin{theorem}
\label{thm:uniqueness}
Suppose   $p>1$
and $\nu \ll \mathcal{L}^{d}$. Then minimizers of \eqref{minimization problem}  are unique if and only if \ref{INJas} holds and \eqref{minimization problem} is feasible.
\end{theorem}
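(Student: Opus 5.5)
Both directions of Theorem \ref{thm:uniqueness} rest on two structural facts: linearity of $\sigma \mapsto \nu_\sigma$, and strict convexity of $\mu \mapsto W_p^p(\mu,\nu)$ when $p>1$ and $\nu \ll \mathcal{L}^d$.

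\emph{Necessity.} Suppose minimizers are unique. Uniqueness presupposes that a minimizer $\sigma_*$ exists. For that $\sigma_*$ the value $W_p^p(\nu_{\sigma_*},\nu)$ must be finite. Otherwise every $\sigma$ attains the infimum $+\infty$, and every element of $\mathcal{P}(\mathcal{X})$ is a minimizer. This contradicts uniqueness as soon as $\mathcal{P}(\mathcal{X})$ has two elements, i.e.\ $|\mathcal{X}|\ge 2$. If $\mathcal{X}$ is a single point, then $W_p^p(\nu_\sigma,\nu)=+\infty$ can genuinely occur and the statement must be read with this degenerate case in mind; I would flag it and handle it by convention. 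So the problem is feasible. Moreover $\mathscr{A}=\{\sigma_*\}$, so \ref{INJas} holds trivially.

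\emph{Sufficiency.} Assume feasibility and \ref{INJas}. I do not invoke Theorem \ref{thm:existence}, since uniqueness only concerns sets with at most one element. Let $\sigma_1,\sigma_2 \in \mathscr{A}$, so $W_p^p(\nu_{\sigma_i},\nu)=m<\infty$. Set $\sigma_t=(1-t)\sigma_1+t\sigma_2$. By \eqref{Markovnoise}, $\nu_{\sigma_t}=(1-t)\nu_{\sigma_1}+t\nu_{\sigma_2}$.

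The key step is strict convexity of $\mu\mapsto W_p^p(\mu,\nu)$ along linear interpolation, on measures at finite distance from $\nu$. I would prove it as follows.
\begin{itemize}
\item Since $\nu\ll\mathcal{L}^d$ and $p>1$, Brenier--McCann (Gangbo--McCann for general strictly convex costs $|x-y|^p$) gives unique optimal plans from $\nu$ to $\nu_{\sigma_i}$. These plans are induced by maps $T_i$.
\item The mixture $\gamma=(1-t)(\mathrm{id},T_1)_\#\nu+t(\mathrm{id},T_2)_\#\nu$ couples $\nu$ with $\nu_{\sigma_t}$. Its cost is $(1-t)m+tm=m$.
\item Hence $W_p^p(\nu_{\sigma_t},\nu)\le m$. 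Minimality of $m$ forces equality, so $\gamma$ is optimal.
\item By the same uniqueness theorem, the optimal plan from $\nu$ to $\nu_{\sigma_t}$ is induced by a map. The mixture $\gamma$ is induced by a map only if $T_1=T_2$ $\nu$-a.e.; otherwise it splits mass on a set of positive $\nu$-measure.
\item Therefore $\nu_{\sigma_1}=T_{1\#}\nu=T_{2\#}\nu=\nu_{\sigma_2}$, and \ref{INJas} yields $\sigma_1=\sigma_2$.
\end{itemize}

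\emph{Main obstacle.} The delicate point is applying Gangbo--McCann when the target measures $\nu_{\sigma_i}$ need not have finite $p$th moment; only $W_p^p(\nu_{\sigma_i},\nu)<\infty$ is known. I would use the version of the theorem requiring only a finite-cost coupling. Since $\nu\in\mathcal{P}_p$, finite cost also forces $\nu_{\sigma_i}\in\mathcal{P}_p$, so the standard hypotheses are met anyway. This should close the gap.
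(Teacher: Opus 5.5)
Your proof is correct and follows the paper's argument. Necessity is immediate, and sufficiency combines linearity of $\sigma\mapsto\nu_\sigma$ with strict convexity of $W_p^p(\cdot,\nu)$ along linear interpolations; the paper simply cites this convexity (Santambrogio, Prop.~7.19), while you re-derive it on $\mathbb{R}^d$ from Gangbo--McCann optimal maps and the fact that a mixture of two graph plans is a graph plan only if the maps agree $\nu$-a.e. You are also right that the forward direction genuinely fails when $\mathcal{X}$ is a single point with $M_p(\rho_x)=+\infty$, a degenerate case the paper's ``trivial'' glosses over.
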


In Propositions \ref{necessity of existence hypotheses examples} and \ref{examples showing necessity of uniqueness hypotheses} we show by counterexample that none of the hypotheses of Theorems \ref{thm:existence} and \ref{thm:uniqueness} may be omitted: see   Figure \ref{fig:well posedness summary for general denoising problem} for a summary of these results.\footnote{Note that it is immediate that \ref{INJas} and
  feasibility of (\ref{minimization problem}) are necessary for  Theorem \ref{thm:uniqueness}; for brevity, these are omitted from the figure. Furthermore,  the only use of the assumption $\nu \ll \mathcal{L}^{d}$ in Theorem \ref{thm:uniqueness} is to ensure the strict convexity of $W_p^p$ along linear interpolations, which holds as long as the optimal transport plan from $\nu$ to any other measure is induced by a map. Consequently, one may relax this assumption to only require that $\nu$ does not concentrate mass on sets of Hausdorff dimension $d-1$  \cite{Ga96}.}
\begin{figure}[h]
\[
{\setlength{\arraycolsep}{4pt}
\begin{array}{|c|c|c|}
\hline
\exists \eqref{minimization problem} & \mathrm{\ref{CTYas}} & \neg\mathrm{\ref{CTYas}} \\
\hline
\mathrm{\ref{CRCas}} 
  & \begin{array}{c}
      \text{Yes} \\
      \text{Thm. \ref{thm:existence}}
    \end{array}
  & \begin{array}{c}
      \text{No} \\
      \text{Prop.\ \ref{necessity of existence hypotheses examples}}
    \end{array}
\\
\hline
\neg\mathrm{\ref{CRCas}} 
  & \begin{array}{c}
      \text{No} \\
      \text{Prop.\ \ref{necessity of existence hypotheses examples}}
    \end{array}
  & {}
\\
\hline
\end{array}
}
\ \ \ \ \
{\setlength{\arraycolsep}{4pt}
\begin{array}{|c|c|c|}
\hline
! \eqref{minimization problem} & p>1 & \neg(p>1) \\
\hline
\nu \ll \mathcal{L}^{d}
  & \begin{array}{c}
      \text{Yes} \\
      \text{Thm.\ \ref{thm:uniqueness}}
    \end{array}
  & \begin{array}{c}
      \text{No} \\
      \text{Prop.\ \ref{examples showing necessity of uniqueness hypotheses}}
    \end{array}
\\
\hline
\neg(\nu \ll \mathcal{L}^{d})
  & \begin{array}{c}
      \text{No} \\
      \text{Prop.\ \ref{examples showing necessity of uniqueness hypotheses}}
    \end{array}
  & {}
\\
\hline
\end{array}
}
\]
\caption{Summary of main results on existence and uniqueness of optimizers for unfolding with a Wasserstein loss. Left: sharp conditions for existence of minimizers of (\ref{minimization problem}) Right: sharp conditions for uniqueness of minimizers of (\ref{minimization problem}). }
\label{fig:well posedness summary for general denoising problem}
\end{figure}

\begin{remark} \label{LasserreRemark}
Lasserre \cite{La24} studies the problem \eqref{minimization problem} where the noise model $\rho$ assigns points $(\alpha, \Sigma)$ of a compact parameter set $\mathcal{X} \subseteq \mathbb{R}^d \times \mathbb{R}^{d(d+1)/2}$ to the measure $\rho_{(\alpha, \Sigma)}$ on $\mathcal{Y} = \mathbb{R}^d$ with density w.r.t. $\mathcal{L}^d$ given by a multivariate Gaussian of mean $\alpha$ and covariance $\Sigma$. In this way, $\nu_{\sigma}$ is a mixture of Gaussians according to the distribution $\sigma$ of mean and variance parameters. Seeing as this noise model satisfies the conditions of Theorem \ref{thm:existence}, as an immediate corollary we generalize Lasserre's main existence result to nonempty Polish subspaces of $\mathbb{R}^d \times \mathbb{R}^{d(d+1)/2}$. 
\end{remark}

In Propositions \ref{(CTY1), (CTY2), and (CRC) sufficient conditions}
and \ref{(INJ) sufficient conditions}, we provide examples of noise models $\rho$ that satisfy the hypotheses of our main  Theorems \ref{thm:existence} and \ref{thm:uniqueness}.
As a particular case of these results, we resolve an open problem asked in Li, et al. \cite{Li25} on the well-posedness of the denoising problem 
\begin{equation}
\label{Li et al denoising problem}
\sigma_* := \arg \min_{\sigma \in \mathcal{P}(\mathcal{X})} W_p^p(\phi \# \sigma, \nu)
\end{equation}
where $\phi:\mathcal{X} \rightarrow \mathcal{Y}$ is measurable. Indeed, \eqref{Li et al denoising problem} is of the form \eqref{minimization problem} with noise model $\rho_{x} = \delta_{\phi(x)}$, which we call \textit{transport map} noise, where   $\delta_\alpha$ denotes the Dirac delta measure at $\alpha \in \mathbb{R}^{d}$.

\begin{corollary}
\label{cor:existence}
Suppose that $\phi$ is continuous and $|\phi|$ has totally bounded sublevel sets.
Then a minimizer of \eqref{Li et al denoising problem} exists.
\end{corollary}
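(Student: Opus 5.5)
The plan is to deduce Corollary \ref{cor:existence} directly from Theorem \ref{thm:existence}. Problem \eqref{Li et al denoising problem} is the special case of \eqref{minimization problem} with transport map noise $\rho_x = \delta_{\phi(x)}$. First we check that this $\rho$ is a Markov kernel and that $\nu_\sigma = \phi\#\sigma$. For a Borel set $B \subseteq \mathcal{Y}$ we have $x \mapsto \rho_x(B) = \mathbbm{1}_{\phi^{-1}(B)}(x)$, which is Borel measurable because $\phi$ is continuous and hence Borel. Next, for $f \in C_b(\mathcal{Y})$ we have $\int_{\mathcal{Y}} f \, d\rho_x = f(\phi(x))$, so \eqref{Markovnoise} gives
\[
\int f \, d\nu_\sigma = \int_{\mathcal{X}} f(\phi(x)) \, d\sigma(x) = \int f \, d(\phi\#\sigma).
\]
Hence $\nu_\sigma = \phi\#\sigma$, and the two minimization problems coincide.

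It then remains to verify \ref{CTYas} and \ref{CRCas}.

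For \ref{CTYas}, take $x_n \to x$ in $\mathcal{X}$ and $f \in C_b(\mathcal{Y})$. Then $\int f \, d\rho_{x_n} = f(\phi(x_n)) \to f(\phi(x)) = \int f \, d\rho_x$, because $f \circ \phi$ is continuous. Since $\mathcal{X}$ is Polish, and in particular metrizable, sequential continuity is enough to conclude that $x \mapsto \rho_x$ is narrowly continuous.

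For \ref{CRCas}, compute $M_p(\rho_x) = \int |y|^p \, d\delta_{\phi(x)}(y) = |\phi(x)|^p$. For each $c \geq 0$,
\[
\{x : M_p(\rho_x) \le c\} = \{x : |\phi(x)| \le c^{1/p}\},
\]
and for $c < 0$ the set is empty. Every sublevel set of $M_p(\rho_\cdot)$ is therefore a sublevel set of $|\phi|$, which is totally bounded by hypothesis.

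With both assumptions in place, Theorem \ref{thm:existence} yields a minimizer of \eqref{minimization problem}, which is a minimizer of \eqref{Li et al denoising problem}. There is no real obstacle here. The only points needing care are the identification $\nu_\sigma = \phi\#\sigma$ and the correct translation of sublevel sets through the monotone map $t \mapsto t^p$.
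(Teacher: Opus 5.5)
Your proposal is correct and follows essentially the same route as the paper. The paper also obtains the corollary from Theorem \ref{thm:existence}, checking \ref{CTYas} and \ref{CRCas} through Proposition \ref{(CTY1), (CTY2), and (CRC) sufficient conditions} with $h=\delta_0$, so that $\rho_x = t_{\phi(x)}\#\delta_0 = \delta_{\phi(x)}$. Your direct verification is that same specialization; in the Dirac case the \ref{CRCas} check reduces to the exact identity $\{M_p(\rho_\cdot)\le c\} = \{|\phi|\le c^{1/p}\}$, instead of the inclusion argument the paper uses for general $h$.
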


\begin{corollary}
\label{cor:uniqueness}
Suppose $p>1$, $\nu \ll \mathcal{L}^{d}$, and $\phi$ is injective. Then minimizers of \eqref{Li et al denoising problem}, if they exist, are unique if and only if \eqref{Li et al denoising problem} is feasible. 
\end{corollary}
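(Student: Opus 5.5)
The plan is to reduce Corollary \ref{cor:uniqueness} to Theorem \ref{thm:uniqueness} applied to the transport map noise model $\rho_x = \delta_{\phi(x)}$. For this kernel $\nu_\sigma = \phi \# \sigma$, by \eqref{Markovnoise}, and \eqref{Li et al denoising problem} is exactly \eqref{minimization problem}. We first check that $\rho$ is a Markov kernel. Since $\phi$ is measurable, for any Borel $B \subseteq \mathcal{Y}$ the map $x \mapsto \rho_x(B) = \mathbbm{1}_{\phi^{-1}(B)}(x)$ is Borel-measurable. The hypotheses $p>1$ and $\nu \ll \mathcal{L}^d$ are assumed directly. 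Theorem \ref{thm:uniqueness} then says that minimizers are unique if and only if \ref{INJas} holds and the problem is feasible. So it remains to show that \ref{INJas} holds automatically when $\phi$ is injective. Then uniqueness is equivalent to feasibility alone.

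The key step is injectivity of $\sigma \mapsto \phi\#\sigma$ on all of $\mathcal{P}(\mathcal{X})$, which in particular gives injectivity on $\mathscr{A}$. Suppose $\phi\#\sigma_1 = \phi\#\sigma_2$. We need $\sigma_1(A) = \sigma_2(A)$ for every Borel $A \subseteq \mathcal{X}$. We can write $\sigma_i(A) = \sigma_i(\phi^{-1}(\phi(A))) = \phi\#\sigma_i(\phi(A))$, where the first equality uses injectivity of $\phi$. This argument requires $\phi(A)$ to be Borel, or at least measurable for the pushforward measure.

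This measurability is the main obstacle. We would invoke the Lusin--Souslin theorem: an injective Borel map from a Polish space into a Polish space ($\mathcal{Y} = \mathbb{R}^d$) sends Borel sets to Borel sets. Hence $\phi(A)$ is Borel and the identity above holds, so $\sigma_1 = \sigma_2$.

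If we wanted to avoid descriptive set theory, we could fall back on universal measurability of analytic sets. The image $\phi(A)$ is analytic, and two measures agreeing on Borel sets also agree on their completions. This yields the same conclusion.

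Finally, we assemble the argument. Under the stated hypotheses, \ref{INJas} always holds. Theorem \ref{thm:uniqueness} therefore gives that minimizers are unique if and only if \eqref{Li et al denoising problem} is feasible. The clause ``if they exist'' in the corollary is harmless. When the problem is infeasible, every $\sigma$ has $W_p^p(\phi\#\sigma,\nu)=+\infty$, so all of $\mathcal{P}(\mathcal{X})$ consists of minimizers. These are non-unique whenever $\mathcal{X}$ has more than one point, and this case is already handled inside Theorem \ref{thm:uniqueness}.
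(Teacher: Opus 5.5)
Your proposal is correct and follows essentially the paper's route: the corollary is Theorem \ref{thm:uniqueness} applied to $\rho_x=\delta_{\phi(x)}$, with \ref{INJas} supplied by Proposition \ref{(INJ) sufficient conditions}(iii). There the paper pushes forward by a Borel left inverse of $\phi$ (Kechris, Cor.~15.2) rather than using your Lusin--Souslin image argument, which is the same fact in another guise. As an aside, feasibility is automatic here: $W_p^p(\delta_{\phi(x)},\nu)<\infty$ for any $x$ since $\nu\in\mathcal{P}_p(\mathcal{Y})$, so your discussion of the infeasible case is vacuous.
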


The above hypotheses on $\phi$ imply \ref{CTYas}, \ref{CRCas}, and \ref{INJas}; see Propositions \ref{(CTY1), (CTY2), and (CRC) sufficient conditions} and \ref{(INJ) sufficient conditions}. In Figure \ref{fig:well posedness summary for Li et al denoising problem} we summarize our well-posedness results for \eqref{Li et al denoising problem}, again omitting the trivial uniqueness conditions of the injectivity of $\phi$ and feasibility of \eqref{Li et al denoising problem}. We abbreviate ``$\phi$ is continuous'' as ``$\phi$ cont.'' and ``$|\phi|$ has totally bounded sublevel sets'' as ``$\phi$ crc.''.

\begin{figure}[h]
\[
{\setlength{\arraycolsep}{4pt}
\begin{array}{|c|c|c|}
\hline
\exists \eqref{Li et al denoising problem} & \text{$\phi$ } \mathrm{cont.} & \neg(\text{$\phi$ } \mathrm{cont.}) \\
\hline
\mathrm{\text{$\phi$ } \mathrm{crc.}} 
  & \begin{array}{c}
      \text{Yes} \\
      \text{Cor. \ref{cor:existence}}
    \end{array}
  & \begin{array}{c}
      \text{No} \\
      \text{Prop.\ \ref{necessity of existence hypotheses examples}}
    \end{array}
\\
\hline
\neg(\text{$\phi$ } \mathrm{crc.})
  & \begin{array}{c}
      \text{No} \\
      \text{Prop.\ \ref{necessity of existence hypotheses examples}}
    \end{array}
  & {}
\\
\hline
\end{array}
}
\ \ \ \ \ 
{\setlength{\arraycolsep}{4pt}
\begin{array}{|c|c|c|}
\hline
! \eqref{Li et al denoising problem} & p>1 & \neg(p>1) \\
\hline
\nu \ll \mathcal{L}^{d}
  & \begin{array}{c}
      \text{Yes} \\
      \text{Cor.\ \ref{cor:uniqueness}}
    \end{array}
  & \begin{array}{c}
      \text{Yes ($d=1$)} \\
      \text{Prop.\ \ref{d=1}}
    \end{array}
\\
\hline
\neg(\nu \ll \mathcal{L}^{d})
  & \begin{array}{c}
      \text{No} \\
      \text{Prop.\ \ref{examples showing necessity of uniqueness hypotheses for transport map noise}}
    \end{array}
  & {}
\\
\hline
\end{array}
}
\]
\caption{Summary of existence and uniqueness results for unfolding with a Wasserstein loss, in the special case of transport map noise. Left: sharp conditions for existence of minimizers of (\ref{Li et al denoising problem}). Right:   conditions for uniqueness of minimizers of (\ref{Li et al denoising problem}). }
\label{fig:well posedness summary for Li et al denoising problem}
\end{figure}

Note that, in the case of general noise models $\rho_x$ treated in Theorem \ref{thm:uniqueness}, uniqueness fails when $p=1$. However, in the present case of transport map noise $\rho_x = \delta_{\phi(x)}$ uniqueness holds additionally in the case $p = 1$, $d=1$; see Proposition \ref{d=1}. We leave the study of uniqueness for $p=1$, $d>1$ to future work. 

We conclude by developing a numerical method for computing approximate solutions to \eqref{minimization problem}. Due to the inherent instability of \eqref{minimization problem}, in the sense that, for many noise models, there exist $\sigma$ and $\sigma'$ ``far apart'' in $W_p$ for which $\nu_{\sigma} \approx \nu_{\sigma'}$ in $W_p$, entropic regularization is used to select an approximate minimizer. At the continuum level, this leads to the following optimization problem, for a fixed regularization parameter $0 < \varepsilon \ll 1$ and reference measures $\hat{m}, \hat{\sigma}$:
\begin{align}
\label{entropic minimization problem}
\sigma_\varepsilon^* &:= \argmin_{\sigma \in \mathcal{P}(\mathcal{X})} W_{p,\varepsilon}^p(\nu_\sigma, \nu) + \varepsilon {\rm KL}(\nu_\sigma | \hat{m}) + \varepsilon {\rm KL}(\sigma | \hat{\sigma}) 
\end{align}
where $W_{p, \varepsilon}:\mathcal{P}(\mathcal{Y}) \times \mathcal{P}(\mathcal{Y}) \rightarrow [0,\infty]$ denotes the entropically regularized $p$-Wasserstein distance, given by
\begin{align} \label{entropicWp}
W_{p,\varepsilon}^p(\mu_1, \mu_2) := \inf_{\Gamma \in \Pi(\mu_1, \mu_2)} \int_{\mathcal{Y} \times \mathcal{Y}} |y' - y|^p d\Gamma(y', y) + \varepsilon {\rm KL}(\Gamma | \mu_1 \otimes \mu_2).
\end{align}
This choice of regularization is motivated by the fact that minimizers can be approximated by a provably convergent numerical scheme. Note that (\ref{entropic minimization problem}) is one of many equivalent entropic regularizations that lead to the same numerical approach; see Remark \ref{choiceOfReferenceMeasure}.

In our numerical method, we focus on the case when both the measured data $\nu$ and the values of the noise model $\rho_{x}$ are finitely supported measures, as is the case in the motivating applications in particle physics. To solve (\ref{entropic minimization problem}) numerically, we suppose in addition that $\hat{m}, \hat{\sigma}$ are empirical measures. The locations on which $\hat{\sigma}$ is supported encode a prior on the distribution of an optimal $\sigma_\varepsilon^*$. 
In this setting, (\ref{entropic minimization problem}) reduces to a fully discrete convex optimization problem with linear constraints.

Leveraging this perspective, in section \ref{section:numerical method}, we describe an iterative method based on Bregman projections to approximate the minimizer, with existing results in the optimization literature providing quantitative rates of convergence to optimum; see Remark \ref{rate of convergence}. Furthermore, there are equivalent, but  lower dimensional, generalized Sinkhorn-type iterations that only require iterating two vectors whose lengths are the dimensions of this matrix. The scheme scales only with the number of points in the support of  the measurement $\nu$ and the noisy data $\nu_\sigma$, with no intrinsic dependence on the dimension of the underlying space. 

Finally, in section \ref{section numerical results}, we compare the performance of our optimal transport (OT) unfolding method to the classical Richardson-Lucy (RL) method. In the RL case, to enforce the required absolute continuity hypothesis (\ref{KLnecessities}), we discretize the measured data $\nu$ and the Markov kernel $\rho_x$ on a grid. We consider both one dimensional problems, based on discretizations of simple continuum models, and two dimensional problems, based on a physically motivated jet mass unfolding problem. Our numerical results demonstrate that OT unfolding has robust performance with respect to the Sinkhorn regularization parameter $\epsilon$, in contrast to RL, which depends strongly and non-monotonically on the number of bins in the grid discretization. While RL requires fewer iterations to converge, OT generally attains the best accuracy, where accuracy is measured in terms of the size of $W_2(\nu_\sigma,\nu)$. The improved accuracy of OT is greatest when the discretization of the Markov kernel or measured data is coarse, so that there is little overlap between the measures. In this case, the absolute continuity hypothesis (\ref{KLnecessities}) fails badly, and coarse binning is required to restore it, introducing discretization errors into the RL unfolding method. Finally, in the case of the two dimensional jet mass unfolding problem, we also compare various  observables of the exact solution $\sigma_*$ to the OT and RL approximations, showing that the improved accuracy we observed in terms of $W_2(\nu_\sigma,\nu)$ persists for   physically relevant observables.
  
\subsection{Future work}
There are several directions for future work. In terms of analysis of the unfolding problem (\ref{minimization problem}), it would be interesting to investigate the stability of $\sigma_*$ and $\nu_{\sigma_*}$ subject to perturbations of $\nu$ and $\rho_x$. In terms of the numerical method, it is an open question to determine conditions on the convergence of discrete to continuum measures so that, as the Sinkhorn regularization is removed, minimizers of the discrete unfolding problem converge to minimizers of the original continuum unfolding problem (\ref{minimization problem}). In terms of applications in particle physics, in future work, we aim to apply to the method to a range of recent unbinned measurements; see \cite{canelli2026practical} and the references therein. Hadronic final states, where multidimensional and moment measurements are critical, particularly stand to benefit from our approach.

\subsection{Outline}
The paper is organized as follows. In \S \ref{sec:well-posedness}, we prove the existence and uniqueness of solutions to the OT unfolding problem \eqref{minimization problem} under hypotheses that we demonstrate to be sharp. \S \ref{section:numerical method} describes our numerical method for solving this problem  in the discrete setting. In \S \ref{section numerical results}, we give several numerical examples, comparing the performance of our numerical method with the classical Richardson-Lucy approach.

\section{Well-posedness of OT Unfolding}
\label{sec:well-posedness}
\ 
\subsection{Existence of minimizers}
\label{subsection-existence} 
We now prove Theorem \ref{thm:existence} on the existence of minimizers of \eqref{minimization problem} via the direct method of the calculus of variations, 
first showing lower semicontinuity of the objective functional and then showing compactness of a minimizing sequence. 
In the examples that follow we demonstrate that the hypotheses \ref{CTYas} and \ref{CRCas} are, in general, necessary for existence. We close the section with examples of measures satisfying these hypotheses.

\begin{proposition}
\label{lsc of functional}
Suppose that \ref{CTYas} holds. Then $\mathcal{F}(\sigma) := W_p^p(\nu_\sigma, \nu)$ is lower semicontinuous in the narrow topology.
\end{proposition}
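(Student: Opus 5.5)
The proof will go in two steps: first show that $\sigma \mapsto \nu_\sigma$ is continuous from $\mathcal{P}(\mathcal{X})$ to $\mathcal{P}(\mathcal{Y})$ when both carry the narrow topology, and then use the classical fact that $W_p^p(\cdot,\nu)$ is lower semicontinuous with respect to narrow convergence. Composing a continuous map with a lower semicontinuous function gives lower semicontinuity of $\mathcal{F}$. Since $\mathcal{X}$ is Polish, the narrow topology on $\mathcal{P}(\mathcal{X})$ is metrizable, so it suffices to work with sequences $\sigma_n \to \sigma$ narrowly.

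\textbf{Step 1: continuity of $\sigma \mapsto \nu_\sigma$.} Fix $f \in C_b(\mathcal{Y})$ and set $g_f(x) := \int_{\mathcal{Y}} f \, d\rho_x$.
\begin{itemize}
\item $g_f$ is bounded, with $|g_f| \le \|f\|_\infty$.
\item By \ref{CTYas}, $x_k \to x$ implies $\rho_{x_k} \to \rho_x$ narrowly, hence $g_f(x_k) \to g_f(x)$. Since $\mathcal{X}$ is metrizable, this sequential continuity means $g_f \in C_b(\mathcal{X})$.
\end{itemize}
By \eqref{Markovnoise}, $\int f \, d\nu_{\sigma_n} = \int g_f \, d\sigma_n \to \int g_f \, d\sigma = \int f \, d\nu_\sigma$. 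This holds for every $f \in C_b(\mathcal{Y})$, so $\nu_{\sigma_n} \to \nu_\sigma$ narrowly. Measurability of $g_f$ is not an issue here: it is already continuous, and in general it follows from the Markov kernel property.

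\textbf{Step 2: lower semicontinuity of $W_p^p(\cdot,\nu)$.} Suppose $\mu_n \to \mu$ narrowly; we want $W_p^p(\mu,\nu) \le \liminf_n W_p^p(\mu_n,\nu)$, where the value $+\infty$ is allowed because $\nu_\sigma$ need not have a finite $p$th moment.
\begin{itemize}
\item Pass to a subsequence realizing the liminf, and assume that liminf is finite (otherwise there is nothing to prove).
\item Pick optimal plans $\gamma_n \in \Pi(\mu_n,\nu)$. Their marginals form narrowly convergent, hence tight, families, so $\{\gamma_n\}$ is tight (Prokhorov).
\item Extract a further subsequence with $\gamma_n \to \gamma$ narrowly. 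Then $\gamma \in \Pi(\mu,\nu)$, since the marginal constraints pass to the limit.
\item The cost $|y'-y|^p$ is continuous and bounded below, so by the Portmanteau theorem (lower semicontinuity of integrals of lsc functions bounded below),
\[
\int |y'-y|^p \, d\gamma \le \liminf_n \int |y'-y|^p \, d\gamma_n .
\]
\end{itemize}
Therefore $W_p^p(\mu,\nu) \le \liminf_n W_p^p(\mu_n,\nu)$. This is standard (e.g.\ Villani or Santambrogio), and I would cite it rather than reprove it.

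\textbf{Main obstacle.} The only potential subtlety is making sure the argument is valid in the extended-valued setting, where $\nu_\sigma$ may have infinite $p$th moment. The reason to use narrow convergence rather than $W_p$ convergence is exactly that the tightness argument of Step 2 never needs finite moments of $\mu_n$. Step 1 is the place where \ref{CTYas} is used, and it is used essentially.
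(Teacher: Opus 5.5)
Your proposal is correct and follows the paper's proof: \ref{CTYas} makes $x \mapsto \int \varphi \, d\rho_x$ bounded and continuous, so $\sigma \mapsto \nu_\sigma$ is narrowly continuous, and narrow lower semicontinuity of $W_p(\cdot,\nu)$ then gives the result. The only differences are that you sketch the tightness argument for that lower semicontinuity, which the paper simply cites (Villani, Remark 6.12), and that you spell out metrizability and the $+\infty$ case, which the paper leaves implicit.
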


\begin{proof}
For each $\varphi \in C_b(\mathcal{Y})$ the map $x \mapsto \int_{\mathcal{Y}} \varphi(y) d\rho_{x}(y)$ is continuous by \ref{CTYas} and trivially bounded. It follows for any narrowly convergent sequence $\sigma_n \rightarrow \sigma_0$ that
\begin{align*}
\int_{\mathcal{X}} \left( \int_{\mathcal{Y}} \varphi(y) \,d\rho_{x}(y) \right) \,d\sigma_{n}(x) \xrightarrow[]{n \rightarrow \infty} \int_{\mathcal{X}} \left( \int_{\mathcal{Y}} \varphi(y) \,d\rho_{x}(y) \right) \,d\sigma_0(x),
\end{align*}
i.e. $\nu_{\sigma_n} \rightarrow \nu_{\sigma_0}$ narrowly. Thus, $\sigma \mapsto \nu_{\sigma}$ is continuous in the narrow topology. 
Since $W_p(\cdot, \nu)$ is lower semi-continuous in the narrow topology  \cite[Remark 6.12]{Vi09}, so is $\mathcal{F}$. 
\end{proof}

\begin{proposition}
\label{compactness of constraint set}
Suppose that \ref{CTYas} and \ref{CRCas} hold 
and \eqref{minimization problem} is feasible. Then every minimizing sequence of \eqref{minimization problem} has a narrowly convergent subsequence.
\end{proposition}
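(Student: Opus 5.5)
We will apply Prokhorov's theorem on the Polish space $\mathcal{X}$: it suffices to show that any minimizing sequence $(\sigma_n)$ is tight. Since $W_p^p(\nu_{\sigma_n},\nu) \to \inf_\sigma W_p^p(\nu_\sigma,\nu) < +\infty$ by feasibility, we may assume $W_p(\nu_{\sigma_n},\nu) \le C$ for all $n$. By the triangle inequality for $W_p$ and $W_p(\mu,\delta_0)^p = M_p(\mu)$, we get $M_p(\nu_{\sigma_n})^{1/p} \le C + M_p(\nu)^{1/p} =: C'$. Testing \eqref{Markovnoise} against $f(y) = |y|^p$ (truncated as $\min(|y|^p,R)$ and then letting $R\to\infty$ by monotone convergence) gives
\begin{align*}
\int_{\mathcal{X}} M_p(\rho_x)\, d\sigma_n(x) = M_p(\nu_{\sigma_n}) \le (C')^p .
\end{align*}
Hence, by Markov's inequality, for each $\lambda>0$ the sublevel set $K_\lambda := \{x : M_p(\rho_x) \le \lambda\}$ satisfies $\sigma_n(\mathcal{X}\setminus K_\lambda) \le (C')^p/\lambda$ uniformly in $n$.

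The remaining issue is that \ref{CRCas} gives only total boundedness of $K_\lambda$, not compactness, and tightness requires compact sets. Here we use \ref{CTYas}. First, $x \mapsto M_p(\rho_x)$ is lower semicontinuous, being a supremum over $R$ of the continuous maps $x \mapsto \int \min(|y|^p,R)\,d\rho_x$. Consequently $K_\lambda$ is closed in $\mathcal{X}$.

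A closed, totally bounded subset of a complete metric space is compact. So we fix a complete metric on the Polish space $\mathcal{X}$; this is the step requiring care, since total boundedness in \ref{CRCas} must be understood with respect to a complete compatible metric, and I would state that interpretation explicitly. With it, $K_\lambda$ is compact.

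The family $(\sigma_n)$ is therefore tight, and Prokhorov's theorem yields a narrowly convergent subsequence. The main obstacle is precisely this closedness/compactness step: without \ref{CTYas}, $M_p(\rho_x)$ need not be lower semicontinuous, and the sublevel sets may fail to be closed. That explains why both hypotheses appear in the statement.
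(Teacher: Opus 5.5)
Your proof is correct and follows the paper's argument. The paper likewise bounds $\int M_p(\rho_x)\,d\sigma_n = M_p(\nu_{\sigma_n})$ by the triangle inequality for $W_p$ (in the form $2^{p-1}\mathcal{F}(\sigma_n)+2^{p-1}M_p(\nu)$), uses that the sublevel sets of $x\mapsto M_p(\rho_x)$ are closed by \ref{CTYas} and totally bounded by \ref{CRCas}, and then applies Prokhorov via the compact-sublevel integral criterion, which is equivalent to your Markov-inequality tightness step; your truncation argument and your remark that a complete metric is needed make precise two points the paper leaves implicit.
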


\begin{proof}
Let $\{\sigma_n\}_{n \in \mathbb{N}}$ be a minimizing sequence,  $
\lim_{n \rightarrow \infty} \mathcal{F}(\sigma_n) = \inf_{\sigma \in \mathcal{P}(\mathcal{X})} \mathcal{F}(\sigma).$ The feasibility of \eqref{minimization problem} ensures $\mathcal{F}(\sigma_n)$ is bounded. In view of Prokhorov's Theorem \cite[Theorem 5.1.3 and Remark 5.1.5]{Am08} it suffices to exhibit some $G: \mathcal{X} \rightarrow [0,\infty]$ whose sublevels $\{x \in \mathcal{X}: G(x) \leq c\}$ are compact and 
$$
\sup_{n \in \mathbb{N}} \int_{\mathcal{X}} G(x) d\sigma_n(x) < \infty.
$$
Let $G(x) = M_p(\rho_{x})$, the sublevels of which are indeed totally bounded by \ref{CRCas} and closed by \ref{CTYas} and the lower semicontinuity of $M_p$ in the narrow topology. By the triangle inequality for $W_p$ and the fact that $M_p(\cdot) = W_p^p(\cdot, \delta_0)$, one has
\begin{align*}
\int_{\mathcal{X}} G(x) d\sigma_n(x) =& \int_{\mathcal{X}} \int_{\mathcal{Y}} |y|^p d\rho_{x}(y) d\sigma_n(x) 
= M_p(\nu_{\sigma_n}) 
= W_p^p(\nu_{\sigma_n}, \delta_0) \\
\leq& 2^{p-1} W_p^p(\nu_{\sigma_n}, \nu) + 2^{p-1} W_p^p(\nu, \delta_0) 
= 2^{p-1} \mathcal{F}(\sigma_n) + 2^{p-1} M_p(\nu).
\end{align*}
This gives the result.
\end{proof}

 Theorem \ref{thm:existence} is now an immediate consequence of  the preceding propositions.

\begin{proof}[Proof of Theorem \ref{thm:existence}]
If \eqref{minimization problem} is not feasible, any $\sigma$ is a minimizer. If \eqref{minimization problem} is   feasible, the result follows   from Propositions \ref{lsc of functional} and \ref{compactness of constraint set}, which ensure that a minimizing sequence converges to a minimizer.
\end{proof}

The following proposition shows that, in general, none of the assumptions in Theorem \ref{thm:existence} may be removed.
\begin{proposition}
\label{necessity of existence hypotheses examples} Let $\mathcal{X} = \mathbb{R}^{d'}$.
\begin{enumerate}[label=(\roman*)]
\item There exist $\nu$, $\rho$ satisfying all hypotheses of Theorem \ref{thm:existence}, except \ref{CTYas}, and for which no minimizer of \eqref{minimization problem} exists.

\item There exist $\nu$, $\rho$ satisfying all hypotheses of Theorem \ref{thm:existence}, except \ref{CRCas}, and for which no minimizer of \eqref{minimization problem} exists.

\end{enumerate}
\end{proposition}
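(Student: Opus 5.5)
The plan is to build both counterexamples from transport map noise $\rho_x = \delta_{\phi(x)}$ with $\mathcal{X} = \mathbb{R}^{d'}$ and $\nu = \delta_0$. These choices serve two purposes. They also settle the corresponding entries of Figure \ref{fig:well posedness summary for Li et al denoising problem}. And they make the objective explicit: since the only coupling with $\delta_0$ is the product one,
\[
\mathcal{F}(\sigma) = W_p^p(\phi\#\sigma,\delta_0) = \int_{\mathcal{X}} |\phi(x)|^p \, d\sigma(x).
\]
In each case we choose $\phi$ with $|\phi|>0$ everywhere and $\inf_x |\phi(x)| = 0$. Then $\mathcal{F}(\sigma)>0$ for every $\sigma$, while the infimum is $0$, attained in the limit by $\sigma = \delta_{x_n}$ with $|\phi(x_n)|\to 0$. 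Hence no minimizer exists, and the problem is feasible because the infimum is $0<+\infty$.

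For (i) we need \ref{CRCas} to hold but \ref{CTYas} to fail. Take $\phi(x) = |x|e_1$ for $x\neq 0$ and $\phi(0) = e_1$, where $e_1$ is the first unit vector of $\mathbb{R}^d$.
\begin{itemize}
\item The infimum of $|\phi|$ is $0$ (approached as $x\to0$), but $|\phi|$ never vanishes.
\item The sublevel sets $\{x : M_p(\rho_x)\le c\} = \{x : |\phi(x)|^p\le c\}$ are bounded subsets of $\mathbb{R}^{d'}$, hence totally bounded, so \ref{CRCas} holds.
\item \ref{CTYas} fails at $x=0$, since $\delta_{\phi(x)} \to \delta_0 \neq \delta_{e_1}$ narrowly as $x\to 0$.
\end{itemize}
This is consistent with Proposition \ref{compactness of constraint set}: a minimizing sequence $\delta_{x_n}$ with $x_n \to 0$ is tight, and it converges narrowly to $\delta_0$. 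What breaks is Proposition \ref{lsc of functional}, because $\mathcal{F}(\delta_0) = 1$ exceeds $\liminf \mathcal{F}(\delta_{x_n}) = 0$.

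For (ii) we need \ref{CTYas} to hold but \ref{CRCas} to fail. Take $\phi(x) = e^{-|x|}e_1$.
\begin{itemize}
\item $\phi$ is continuous, so $x\mapsto\delta_{\phi(x)}$ is narrowly continuous and \ref{CTYas} holds.
\item $|\phi|>0$ everywhere and $|\phi(x)|\to0$ as $|x|\to\infty$, so the infimum is $0$ and is not attained.
\item For $c \geq 1$ the sublevel set $\{x : e^{-p|x|}\le c\}$ is all of $\mathbb{R}^{d'}$, which is not totally bounded, so \ref{CRCas} fails.
\end{itemize}
Here it is the compactness step that breaks: the minimizing sequence $\delta_{x_n}$ with $|x_n|\to\infty$ escapes to infinity.

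The only delicate point is making sure that no non-Dirac $\sigma$ attains the value $0$. This is immediate, since $\mathcal{F}(\sigma) = \int|\phi|^p\,d\sigma$ with a strictly positive integrand, so $\mathcal{F}(\sigma)>0$ for every probability measure $\sigma$. The remaining checks are that $\phi$ is Borel, so that $\rho$ is a Markov kernel, and that $\nu = \delta_0\in\mathcal{P}_p(\mathcal{Y})$; both are clear. I expect no real obstacle. The main care needed is in verifying the sublevel-set claims precisely as stated in \ref{CRCas}, namely total boundedness in $\mathcal{X}$ itself.
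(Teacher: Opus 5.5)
Your proposal is correct and follows essentially the same route as the paper. Both use transport map noise $\rho_x = \delta_{\phi(x)}$ with a Dirac target, so that the infimum $0$ is approached but never attained: through a point discontinuity of $\phi$ in (i) and through decay of $|\phi|$ at infinity in (ii). The differences are cosmetic. The paper takes $\nu = \delta_{x_0}$ with $\phi = \mathrm{id}$ modified at $x_0$, which tacitly requires $d = d'$, and takes $\phi(x) = x_0/|x|$ outside the unit ball. Your choice $\phi(x) = |x|e_1$, together with the identity $\mathcal{F}(\sigma) = \int |\phi|^p\,d\sigma > 0$, makes the argument valid for arbitrary $d, d'$.
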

\begin{proof} \
\begin{enumerate}[label=(\roman*)]
\item Let $x_0 \neq 0$, $\nu = \delta_{x_0}$, and $\rho_{x} = \delta_{\phi(x)}$ where $\phi(x) = x$ for $x \neq x_0$ and $\phi(x_0) = -x_0$. Nonexistence of minimizers is trivial by the translation property of the $W_p$ metric. Namely, for any sequence $x_n \rightarrow x_0$ one has $\mathcal{F}(\delta_{x_n}) = |x_n-x_0|^p \rightarrow 0$. However $\nu \neq \nu_{\sigma}$ for every $\sigma$, so $\mathcal{F}(\sigma)>0 \ \forall \sigma$.
\item  
Let $x_0 \neq 0$, $\nu = \delta_0$, and
$\rho_{x} = \delta_{\phi(x)}$ where $\phi(x) = x_0$ for $|x| \leq 1$ and $\phi(x) = x_0 /|x|$ for $|x| > 1$. For any sequence $|x_n| \rightarrow \infty$,  $\mathcal{F}(\delta_{x_n}) = ( |x_0|/|x_n|)^p \rightarrow 0$. However $\nu \neq \nu_{\sigma}$ for every $\sigma$.
\end{enumerate}
\end{proof}

We now provide examples of noise models $\rho$ that satisfy the assumptions \ref{CTYas} and \ref{CRCas}. Here and throughout, for $a \in \mathbb{R}^d$, we define $t_a:\mathbb{R}^d \rightarrow \mathbb{R}^d:y \mapsto y+a$.

\begin{proposition}
\label{(CTY1), (CTY2), and (CRC) sufficient conditions}
Suppose $\rho_{x} = t_{\phi(x)} \# h$ for   $\phi:\mathcal{X} \rightarrow \mathcal{Y}$ measurable, $h \in \mathcal{P}(\mathcal{Y})$. 
\begin{enumerate}[label=(\roman*)]
\item If $\phi$ is continuous, then \ref{CTYas} holds. 
\item If $|\phi|$ has totally bounded sublevel sets,
then \ref{CRCas} holds.
\end{enumerate}
\end{proposition}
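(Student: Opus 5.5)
For (i), I would verify narrow continuity sequentially; this suffices since $\mathcal{X}$ is metrizable. Let $x_n \to x$ in $\mathcal{X}$ and fix $f \in C_b(\mathcal{Y})$. By the definition of the pushforward,
\[
\int_{\mathcal{Y}} f \, d\rho_{x_n} = \int_{\mathcal{Y}} f(y + \phi(x_n)) \, dh(y).
\]
Continuity of $\phi$ gives $\phi(x_n) \to \phi(x)$. Continuity of $f$ then gives $f(y+\phi(x_n)) \to f(y+\phi(x))$ for every $y$. These integrands are bounded by $\|f\|_\infty$, which is integrable against the probability measure $h$. Dominated convergence therefore yields $\int f\, d\rho_{x_n} \to \int f \, d\rho_x$, which is exactly narrow convergence $\rho_{x_n} \to \rho_x$. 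Hence \ref{CTYas} holds.

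For (ii), the plan is to bound $|\phi(x)|$ in terms of $M_p(\rho_x)$, so that each sublevel set of $x \mapsto M_p(\rho_x)$ sits inside a sublevel set of $|\phi|$. We have $M_p(\rho_x) = \int |y + \phi(x)|^p \, dh(y)$.

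First I split into two cases according to $M_p(h)$. If $M_p(h) = +\infty$, then $M_p(\rho_x) = +\infty$ for every $x$, since $|y| \le |y+\phi(x)| + |\phi(x)|$ and $|\phi(x)|$ is a finite constant. Every sublevel set $\{M_p(\rho_x) \le c\}$ with $c<\infty$ is then empty, and hence trivially totally bounded.

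If $M_p(h) < \infty$, I start from the pointwise inequality
\[
|\phi(x)|^p \le \bigl(|y+\phi(x)| + |y|\bigr)^p \le 2^{p-1}\bigl(|y+\phi(x)|^p + |y|^p\bigr),
\]
which uses convexity of $t \mapsto t^p$ for $p \ge 1$. Integrating in $y$ against $h$ gives
\[
|\phi(x)|^p \le 2^{p-1}\bigl(M_p(\rho_x) + M_p(h)\bigr).
\]
Consequently,
\[
\{x : M_p(\rho_x) \le c\} \subseteq \bigl\{x : |\phi(x)| \le \bigl(2^{p-1}(c + M_p(h))\bigr)^{1/p}\bigr\}.
\]
The right-hand side is totally bounded by hypothesis. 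A subset of a totally bounded set is totally bounded, so \ref{CRCas} follows.

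Neither step is a real obstacle. The only point needing care is the case $M_p(h) = \infty$ in (ii), which the case split disposes of.
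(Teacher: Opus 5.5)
Your proof is correct. Part (i) is the same dominated-convergence argument the paper uses.

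For (ii) you take a different route from the paper. You integrate the pointwise bound $|\phi(x)|^p \le 2^{p-1}(|y+\phi(x)|^p + |y|^p)$ against $h$. This gives the explicit inclusion $\{x: M_p(\rho_x)\le c\}\subseteq\{x: |\phi(x)|\le (2^{p-1}(c+M_p(h)))^{1/p}\}$. It is the same convexity trick the paper uses in Proposition~\ref{compactness of constraint set}, but it needs $M_p(h)<\infty$, hence your case split.

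The paper instead picks $r>0$ with $h(B_r(0))>0$. If $|\phi(x)|>R\ge r$, then $|y+\phi(x)|>R-r$ on $B_r(0)$, so $M_p(\rho_x)\ge (R-r)^p h(B_r(0))$. Letting $R$ range over $[r,\infty)$ then reaches every level. This uses only that $h$ charges some ball, so no moment assumption on $h$ and no case distinction are needed.

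The trade-off is in the constants. The paper's version yields a less transparent inclusion that depends on the (possibly tiny) mass $h(B_r(0))$. Yours gives a clean quantitative bound, which Minkowski's inequality even sharpens to $|\phi(x)|\le M_p(\rho_x)^{1/p}+M_p(h)^{1/p}$.
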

\begin{proof} \ 
\begin{enumerate}[label=(\roman*)]
\item Let $x_n \rightarrow x_0$. For any $\varphi \in C_b(\mathcal{Y})$, by the Dominated Convergence Theorem,
\begin{align*}
\left| \int  \varphi \, d\rho_{x_n} - \int  \varphi d\rho_{x_0}  \right|
=& \left| \int  \varphi(\cdot + \phi(x_n)) \, dh  - \int \varphi(\cdot + \phi(x_0)) \, dh  \right| \\
\leq& \int  |\varphi(\cdot + \phi(x_n)) - \varphi(\cdot + \phi(x_0))| \, dh \to 0 .
\end{align*}

\item Let $r>0$ be such that $h(B_r(0)) > 0$ and fix $R \geq r$. For any $x \in \mathcal{X}$ satisfying $|\phi(x)| > R$, it holds
$$
M_p(\rho_{x}) = \int_\mathcal{Y} |y+\phi(x)|^p \,dh(y) \geq (R-r)^p h( B_r(0)).
$$
That is, the $(R-r)^p h(B_r(0))$-sublevel of $x \mapsto M_p(\rho_x)$ is contained in the $R$-sublevel of $|\phi|$. Since $R \geq r$ was arbitrary, this gives the result.
\end{enumerate}
\end{proof}

\subsection{Uniqueness of minimizers}
\label{subsection-uniqueness} \ \\
We begin by proving Theorem \ref{thm:uniqueness} on the uniqueness of minimizers of \eqref{minimization problem}, leveraging Brenier's theorem to prove strict convexity of the objective functional $\mathcal{F}$ along linear interpolations between minimizers. We then give examples that demonstrate the sharpness of our hypotheses and provide examples of noise models that satisfy \ref{INJas}.

We now prove Theorem \ref{thm:uniqueness}.

\begin{proof}[Proof of Theorem \ref{thm:uniqueness}]
The forward implication is trivial, so we show the converse. Assume, for the sake of contradiction that $\sigma_0, \sigma_1 \in \mathscr{A}$, $\sigma_0 \neq \sigma_1$; see equation (\ref{Adef}). By \ref{INJas}, we have $\nu_{\sigma_0} \neq \nu_{\sigma_1}$. By the strict convexity of $W_p^p$ in each argument along linear interpolations  (see e.g. \cite[Proposition 7.19]{santambrogio2015optimal}) and the linearity of $\sigma \mapsto \nu_\sigma$, defining $\sigma_\alpha = (1-\alpha) \sigma_0 + \alpha \sigma_1$, we have  for all $\alpha \in (0,1)$,
\[ W_p^p(\nu_{\sigma_\alpha}, \nu) < (1-\alpha) W_p^p(\nu_{\sigma_0}, \nu)+ \alpha W_p^p(\nu_{\sigma_1}, \nu) = \inf_{\sigma \in \mathcal{P}(\mathcal{X})} W_p^p(\nu_\sigma, \nu),\]
which is a contradiction.
\end{proof}

It is clear that in Theorem \ref{thm:uniqueness} the condition \ref{INJas} may not be removed, and the following proposition shows that, in general, the same is true of the conditions $\nu \ll \mathcal{L}^{d}$ and $p>1$; see Figure \ref{fig:well posedness summary for general denoising problem}. 
Here and throughout, let $\mathcal{N}(\alpha,  \Sigma)$ denote the Gaussian function with mean $\alpha \in \mathbb{R}^d$ and positive definite covariance matrix $\Sigma$,
\begin{align} \label{gaussianfunction}\mathcal{N}(\alpha, \Sigma)(y) = (2\pi)^{-d/2} (\text{det} \Sigma)^{-1/2} \exp(-(y-\alpha)^T \Sigma^{-1} (y-\alpha)/2) .
\end{align} 
For $\beta > 0$ we write $\mathcal{N}(\alpha, \beta)$ as shorthand for $\mathcal{N}(\alpha, \beta I)$.

\begin{proposition} Suppose $\mathcal{X} = \mathbb{R}^{d'}$.
\label{examples showing necessity of uniqueness hypotheses} \ 
\begin{enumerate}[label=(\roman*)]
\item Suppose $p=2$. There exist $\nu$ and $\rho$ satisfying all hypotheses of Theorems \ref{thm:existence} and   \ref{thm:uniqueness}, except $\nu \ll \mathcal{L}^{d}$, and for which minimizers of \eqref{minimization problem} are not unique.
\item Suppose $p=1$. There exist $\nu$ and $\rho$ satisfying all hypotheses of Theorems \ref{thm:existence} and   \ref{thm:uniqueness} and for which minimizers of \eqref{minimization problem} are not unique.
\end{enumerate}
\end{proposition}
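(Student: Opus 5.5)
We will prove both parts by explicit counterexamples. In each, the measured data $\nu$ is placed so that the objective $\mathcal{F}(\sigma)=W_p^p(\nu_\sigma,\nu)$ collapses to a linear functional $\int_{\mathcal{X}} g\,d\sigma$ for an explicit continuous $g$. The minimizers are then exactly the probability measures concentrated on $\operatorname{argmin} g$. We arrange for this set to contain two points with different kernels. Throughout we take $d'=1$, i.e. $\mathcal{X}=\mathbb{R}$.

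\emph{Part (i).} Take $p=2$, $d=2$, and $\nu=\delta_0$, which is not absolutely continuous. Use transport map noise $\rho_x=\delta_{\phi(x)}$ with $\phi(x)=(x,g(x))$ and
$$g(x)=\sqrt{x^4-3x^2+3}.$$
The radicand is positive because its discriminant in $x^2$ is $9-12<0$. Then $|\phi(x)|^2=x^2+g(x)^2=2+(x^2-1)^2$, whose minimum value $2$ is attained exactly at $x=\pm1$. The kernel satisfies all hypotheses except $\nu\ll\mathcal{L}^d$:
\begin{itemize}
\item $\phi$ is continuous, so \ref{CTYas} holds by Proposition \ref{(CTY1), (CTY2), and (CRC) sufficient conditions} with $h=\delta_0$.
\item $|\phi(x)|\ge|x|$, so the sublevel sets of $|\phi|$ are bounded subsets of $\mathbb{R}$, hence totally bounded, and \ref{CRCas} holds by the same proposition.
\item $\phi$ is injective because its first coordinate is $x$.
\end{itemize}
Since $W_2^2(\mu,\delta_0)=M_2(\mu)$, we get $\mathcal{F}(\sigma)=\int|\phi|^2\,d\sigma\ge 2$, with equality if and only if $\sigma$ is supported on $\{\pm1\}$. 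So the problem is feasible and $\mathscr{A}=\{a\delta_{-1}+(1-a)\delta_1 : a\in[0,1]\}$. On $\mathscr{A}$ we have $\nu_\sigma=a\delta_{\phi(-1)}+(1-a)\delta_{\phi(1)}$ with $\phi(-1)\neq\phi(1)$, so \ref{INJas} holds. Nevertheless the minimizers are not unique.

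\emph{Part (ii).} Take $p=1$, $d=1$, and $\nu=\mathcal{L}^1|_{[3,4]}$, which is absolutely continuous. Transport map noise cannot work here, because of the one-dimensional $p=1$ uniqueness result discussed after Figure \ref{fig:well posedness summary for Li et al denoising problem}. Instead we use uniform kernels
$$\rho_x=\text{uniform on }[m(x)-w(x),\,m(x)+w(x)],\qquad m(x)=-(x^2-1)^2,\quad w(x)=2+\tanh x\in(1,3).$$
Every $\rho_x$ is supported in $(-\infty,3]$, hence so is every $\nu_\sigma$, while $\nu$ lives on $[3,4]$. The key observation is that any coupling of $\nu_\sigma$ and $\nu$ has $y'\le y$ almost everywhere. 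Therefore
$$W_1(\nu_\sigma,\nu)=\int y\,d\nu-\int y'\,d\nu_\sigma=\tfrac72-\int m\,d\sigma\ \ge\ \tfrac72,$$
with equality if and only if $\sigma$ is concentrated on $\{m=0\}=\{\pm1\}$.

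The remaining hypotheses are checked as follows:
\begin{itemize}
\item \ref{CTYas} holds by continuity of $m$ and $w$, via dominated convergence as in Proposition \ref{(CTY1), (CTY2), and (CRC) sufficient conditions}.
\item \ref{CRCas} holds because $M_1(\rho_x)\ge|m(x)|-3$, so its sublevel sets are bounded in $\mathbb{R}$.
\item Feasibility is clear.
\item For \ref{INJas}, note $\mathscr{A}=\{a\delta_{-1}+(1-a)\delta_1\}$ and $\nu_\sigma=a\rho_{-1}+(1-a)\rho_1$. Here $\rho_{\pm1}$ are uniform measures centered at $0$ with distinct widths $2\pm\tanh 1$. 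They are distinct probability measures, so $a$ is determined by $\nu_\sigma$.
\end{itemize}
Hence the minimizer is again non-unique.

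The main obstacle is designing (ii) so that \ref{INJas} survives while the objective is degenerate. The points with minimal cost must carry genuinely different kernels with the same first moment. The width function $w$ achieves this, and the separation of supports makes the $W_1$ computation exact rather than merely an inequality.
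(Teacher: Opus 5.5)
Your proof is correct. It uses the same strategy as the paper: choose $\nu$ so that $\mathcal{F}$ becomes a linear functional of $\sigma$ whose minimizers are exactly the measures on a two-point set carrying distinct kernels. The concrete constructions, however, differ.

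In (i), the paper uses absolutely continuous Gaussian kernels $\mathcal{N}(x,v(x))$ with $d=d'$, tuning $v$ so that $M_2(\rho_x)$ is minimized exactly at $q_0,q_1$. Your injective map $\phi(x)=(x,g(x))$ is more explicit. It is in effect a concrete instance of the paper's Proposition \ref{examples showing necessity of uniqueness hypotheses for transport map noise}(ii), showing that even injective, continuous, coercive transport-map noise fails. The price is $d\ge 2$. With $\nu=\delta_0$, a continuous $\phi$ into $\mathbb{R}$ with $|\phi(q_0)|=|\phi(q_1)|>0$ and $\phi(q_0)\neq\phi(q_1)$ must vanish somewhere, so $q_0,q_1$ cannot minimize $|\phi|$.

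In (ii), the paper works for arbitrary $d,d'$. It interpolates between $\rho_{q_0}$ uniform on $[1,2]\times[0,1]^{d-1}$ and $\rho_{q_1}=\delta_{3/2}\otimes\mu$. It then shows every $\sigma_\alpha$ is optimal via a Kantorovich--Rubinstein lower bound with the potential $y\cdot e_1$ plus convexity of $W_1$, and simply states the identification of $\mathscr{A}$ as evident. Your one-dimensional support separation makes every coupling cost the same. This yields the exact identity $\mathcal{F}(\sigma)=\tfrac72-\int m\,d\sigma$ for all $\sigma$, which determines $\mathscr{A}$ completely and makes the check of \ref{INJas} airtight without duality. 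The underlying mechanism is the same in both: distinct kernels with equal mean in the transport direction, as you observe.

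One point of scope: you fix $d'=1$ and particular values of $d$, whereas the proposition is phrased for $\mathcal{X}=\mathbb{R}^{d'}$, and the paper's (ii) works for all $d,d'$. The repair is routine. Write $\tilde x=(x_2,\dots,x_{d'})$.
\begin{itemize}
\item In (i), take $\phi(x)=(x_1,\sqrt{g(x_1)^2+|\tilde x|^2})$, padded with zeros if $d>2$.
\item In (ii), take $m(x)=-(x_1^2-1)^2-|\tilde x|^2$ and $w(x)=2+\tanh x_1$.
\item In both cases, check \ref{INJas} directly on $\mathscr{A}$ rather than through global injectivity of $\phi$.
\item For $d\ge2$ in (ii), tensorize both $\rho_x$ and $\nu$ with the uniform measure on $[0,1]^{d-1}$. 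The duality bound with $y\mapsto y_1$ and the coupling fixing the last $d-1$ coordinates give the same formula for $\mathcal{F}$.
\item For $d=1$ in (i), replace $\delta_{(x,g(x))}$ by the Gaussian $\mathcal{N}(x,g(x)^2)$, which has the same second moment. This is precisely the paper's type of example.
\end{itemize}
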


\begin{proof}
\ 
\begin{enumerate}[label=(\roman*)]
\item Set $\nu = \delta_0$. Let $q_0,q_1 \in B_1(0)$, $|q_0| \neq |q_1|$ and let $d\rho_{x} = \mathcal{N}(x, v(x)) d\mathcal{L}^{d} $ with
$$
v(x) :=
\begin{cases}
|x-q_0| \cdot |x-q_1| +1 - |x|^2, & |x| \leq 1, \\
\left| \frac{x}{|x|} - q_0\right| \cdot \left| \frac{x}{|x|} - q_1\right|, & |x| > 1.
\end{cases}
$$
The formula for the second moment of a Gaussian asserts
$$
M_2(\rho_{x}) = v(x) + |x|^2 = 
\begin{cases} 
|x - q_0| \cdot |x - q_1| + 1, & |x| \leq 1 ,\\
\left| \frac{x}{|x|} - q_0 \right| \cdot \left| \frac{x}{|x|} - q_1 \right| + |x|^2, & |x| > 1.
\end{cases}
$$
\ref{CTYas} and \ref{CRCas} hold, so by Theorem \ref{thm:existence} a minimizer to \eqref{minimization problem} exists. Notice
$$
\mathcal{F}(\sigma) = M_2(\nu_{\sigma}) = \int_{\mathcal{X}} \int_\mathcal{Y} |y|^2 d\rho_{x}(y) d\sigma(x)
$$
so any $\sigma$ that minimizes the above expression will have support on the point(s) $x_0 \in \mathcal{X}$ that minimize(s)
$
\int_\mathcal{Y} |y|^2 d\rho_{x_0}(y) = M_2(\rho_{x_0}),
$
if any exist. Evidently
$$
\argmin_{x_0 \in \mathcal{X}} M_2(\rho_{x_0}) = \{q_0,q_1\}.
$$
Thus the minimizers of \eqref{minimization problem} are those probability measures with support on $\{q_0,q_1\}$, i.e. the measures of the form $\sigma = (1-\lambda) \delta_{q_0} + \lambda \delta_{q_1}$, $\lambda \in [0,1]$. Clearly \ref{INJas} holds, completing the proof.
\item Let 
$d\nu = \mathbf{1}_{[0,1]^{d}} d\mathcal{L}^{d}$. Let $q_0, q_1 \in \mathbb{R}^{d'}$, $q_0 \neq q_1$.  Set 
$d\rho_{q_0} = \mathbf{1}_{[1,2] \times [0,1]^{d-1}} \mathcal{L}^{d}$ and 
$\rho_{q_1} = \delta_{3/2} \otimes \mu$, $d\mu = \mathbf{1}_{[0,1]^{d-1}} d\mathcal{L}^{d-1}$. For $\alpha \in (0,1)$ let $q_\alpha = (1-\alpha) q_0 + \alpha q_1$ and $\rho_{q_\alpha} = t_{(1-\alpha)\alpha e_1} \# [(1-\alpha) \rho_{q_0} + \alpha \rho_{q_1}]$, where $e_1$ is the unit vector in the first coordinate direction. 
Set $Q := \{q_\alpha\}_{\alpha \in [0,1]}$ and define the projection $\Pi_Q(x) := \arg\min_{q \in Q} |x-q|$ and the distance function $d(x,Q) := |x-\Pi_Q(x)|$. 
For $x \not\in Q$, set $\rho_x = t_{d(x,Q) e_1} \# \rho_{\Pi_Q(x)}$.
One can see that \ref{CTYas} and \ref{CRCas} hold, so that by Theorem \ref{thm:existence} a minimizer to \eqref{minimization problem} exists.  It is evident that minimizers are among the measures supported on $\{q_0, q_1\}$. Furthermore, \ref{INJas} holds by construction.
Clearly, when $p=1$, we have $\mathcal{F}(\delta_{q_0}) = \mathcal{F}(\delta_{q_1}) = 1$. Let $\sigma_{\alpha} = (1-\alpha) \delta_{q_0} + \alpha \delta_{q_1}$ where $\alpha \in [0,1]$. By the suboptimality of $y \mapsto y \cdot e_1$ among $1$-Lipschitz potentials in the Kantorovich-Rubenstein duality for $W_1$ \cite[Proposition 3.1]{santambrogio2015optimal}, we have
$$
W_1(\nu_{\sigma_\alpha}, \nu) \geq \int_{\mathcal{Y}} y \cdot e_1 d\nu_{\sigma_\alpha}(y) - \int_{\mathcal{Y}} y \cdot e_1 d\nu(y) = 1.
$$
Together with the convexity of $W_1$ along linear interpolations and the linearity of $\sigma \mapsto \nu_{\sigma}$, this shows all $\sigma_{\alpha}$ are minimizers of \eqref{minimization problem}.
\end{enumerate}
\end{proof}

The following proposition gives examples demonstrating the general necessity of the hypothesis $\nu \ll \mathcal{L}^{d}$ from Theorem \ref{thm:uniqueness}, including when the noise model is of the special form of transport map noise, $\rho_x = \delta_{\phi(x)}$. 
While  minimizers exist for both examples given in Proposition \ref{examples showing necessity of uniqueness hypotheses for transport map noise}, it is only in the case $d \geq 2$ that we construct such an example which additionally satisfies assumptions \ref{CTYas} and \ref{CRCas}. 
\begin{proposition}
\label{examples showing necessity of uniqueness hypotheses for transport map noise}
Suppose $\mathcal{X} = \mathbb{R}^{d'}$.
\begin{enumerate}[label=(\roman*)]
\item There exist $\nu$ and transport map noise $\rho$ satisfying all hypotheses of Theorem \ref{thm:uniqueness}, except $\nu \ll \mathcal{L}^{d}$, and for which minimizers of \eqref{minimization problem} are not unique.
\item Suppose $d \geq 2$. Then there exist $\nu$ and transport map noise $\rho$ satisfying all hypotheses of Theorem \ref{thm:existence} and Theorem \ref{thm:uniqueness}, except $\nu \ll \mathcal{L}^{d}$, and for which minimizers of \eqref{minimization problem} are not unique.
\end{enumerate}
\end{proposition}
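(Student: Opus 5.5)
For both parts we take $\nu=\delta_0$, which is not absolutely continuous with respect to $\mathcal{L}^d$, and transport map noise $\rho_x=\delta_{\phi(x)}$. Then $\nu_\sigma=\phi\#\sigma$, and since the only coupling with a Dirac mass is the product coupling,
\[
\mathcal{F}(\sigma)=W_p^p(\phi\#\sigma,\delta_0)=\int_{\mathcal{X}}|\phi(x)|^p\,d\sigma(x)
\]
for any $p>1$. The plan is to choose $\phi$ with $|\phi|\ge 1$ everywhere and $|\phi|=1$ exactly on two distinct points $q_0,q_1\in\mathbb{R}^{d'}$, with $\phi(q_0)\neq\phi(q_1)$. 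Then $\inf\mathcal{F}=1<\infty$, so \eqref{minimization problem} is feasible. Moreover $\mathcal{F}(\sigma)=1$ holds if and only if $\sigma$ is concentrated on $\{q_0,q_1\}$, so
\[
\mathscr{A}=\{(1-\lambda)\delta_{q_0}+\lambda\delta_{q_1}:\lambda\in[0,1]\},
\]
which contains more than one element. On this set $\sigma\mapsto\phi\#\sigma=(1-\lambda)\delta_{\phi(q_0)}+\lambda\delta_{\phi(q_1)}$ is injective because $\phi(q_0)\neq\phi(q_1)$, so \ref{INJas} holds. Hence every hypothesis of Theorem \ref{thm:uniqueness} except $\nu\ll\mathcal{L}^d$ is satisfied, yet minimizers are not unique. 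Note that we only need injectivity on $\mathscr{A}$, not global injectivity of $\phi$; this matters because a continuous injection $\mathbb{R}^{d'}\to\mathbb{R}^d$ need not exist when $d'>d$.

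For (i), no regularity is required, so a discontinuous $\phi$ suffices. We set $\phi(q_0)=e_1$, $\phi(q_1)=-e_1$, and $\phi(x)=2e_1$ for all other $x$. This is measurable, and the verification above applies verbatim, in any dimension $d\ge1$.

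For (ii) we must additionally ensure \ref{CTYas} and \ref{CRCas}, which by Proposition \ref{(CTY1), (CTY2), and (CRC) sufficient conditions} (with $h=\delta_0$) follow from continuity of $\phi$ and total boundedness of the sublevel sets of $|\phi|$. Take $q_0=0$, $q_1=e_1\in\mathbb{R}^{d'}$ and set
\[
\phi(x)=r(x)\,u(x),\qquad r(x)=1+|x-q_0|\,|x-q_1|,\qquad u(x)=(\cos x_1,\sin x_1,0,\dots,0)\in\mathbb{R}^d .
\]
This construction uses $d\ge2$ essentially, so that $u$ can take distinct unit values. The required properties are checked as follows:
\begin{itemize}
\item $\phi$ is continuous, and $|\phi|=r\ge1$, with equality exactly at $q_0$ and $q_1$.
\item $r(x)\to\infty$ as $|x|\to\infty$, so the sublevel sets of $|\phi|$ are bounded, hence totally bounded.
\item $\phi(q_0)=e_1$ and $\phi(q_1)=(\cos1,\sin1,0,\dots,0)$, which are distinct.
\end{itemize}
Existence of minimizers is then also guaranteed by Theorem \ref{thm:existence}, though here it is already explicit from the description of $\mathscr{A}$.

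The only delicate point is (ii): we need continuity and coercivity of $\phi$ at the same time as two distinct points in the image on the unit sphere. This is exactly where $d\ge2$ enters. In $d=1$ the two image points would have to be $\pm1$, and a continuous $\phi$ from the connected space $\mathbb{R}^{d'}$ joining them would pass through $0$, violating $|\phi|\ge1$.
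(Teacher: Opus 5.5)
Your proposal is correct and follows essentially the same route as the paper. Both take $\nu=\delta_0$ and $\rho_x=\delta_{\phi(x)}$, with $|\phi|$ minimized exactly at two points $q_0\neq q_1$ whose images are distinct and lie on a common sphere, so that $\mathscr{A}$ consists of the measures on $\{q_0,q_1\}$ and \ref{INJas} holds there; in (ii), $d\ge 2$ is used to join the two images continuously while keeping $|\phi|$ coercive. The differences are cosmetic: your explicit $\phi(x)=(1+|x-q_0|\,|x-q_1|)(\cos x_1,\sin x_1,0,\dots,0)$ replaces the paper's construction via a constant-modulus curve $\gamma$ along the segment $Q$, extended by $(1+d(x,Q))\,\phi(\Pi_Q(x))$, and your choice $\phi(q_0)=e_1$, $\phi(q_1)=-e_1$ in (i) avoids the paper's implicit identification of $\mathcal{X}$ with $\mathcal{Y}$ when it sets $\phi(q_i)=q_i$.
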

\begin{proof} \ 
\begin{enumerate}[label=(\roman*)]
\item
Set $\nu = \delta_0$. Let $q_0, q_1 \in \mathcal{X}$ with $|q_0| = |q_1|$ and $q_0 \neq q_1$. Let $\rho_{x} = \delta_{\phi(x)}$ where $\phi(q_i) = q_i$ and $|\phi(x)| \geq 2|q_0|$ for $x \neq q_0, q_1$.
Clearly \ref{INJas} holds and the minimizers are the measures supported on $\{q_0, q_1\}$, i.e. those of the form $\sigma = (1-\alpha) \delta_{q_0} + \alpha \delta_{q_1}, \alpha \in [0,1]$.
\item Set $\nu = \delta_0$. Let $q_0, q_1 \in \mathcal{X}$, $q_0 \neq q_1$. Let $\rho_x = \delta_{\phi(x)}$ where $\phi:\mathcal{X} \rightarrow \mathcal{Y}$ is so that $|\phi(q_0)| = |\phi(q_1)|$ and $\phi(q_0) \neq \phi(q_1)$. Let $\gamma:[0,1] \rightarrow \mathcal{Y}$ be a continuous injective curve with $\gamma(0) = \phi(q_0)$, $\gamma(1) = \phi(q_1)$ and $|\gamma| \equiv \text{const}$. For $\alpha \in (0,1)$, set $q_\alpha = (1-\alpha) q_0 + \alpha q_1$. Define $\phi(q_\alpha) = (1+(1-\alpha)\alpha) \gamma(\alpha)$. For $x \notin \{q_\alpha\}_{\alpha \in [0,1]} =: Q$, set $\phi(x) = (1+d(x,Q)) \phi(\Pi_Q(x))$, where $\Pi_Q(x) := \arg\min_{q \in Q} |x - q|$.
Notice $\phi$ is continuous, $|\phi|$ has (totally) bounded sublevels, and $\arg\min|\phi| = \{q_0, q_1\}$. By Proposition \ref{(CTY1), (CTY2), and (CRC) sufficient conditions},  $\rho$ satisfies \ref{CTYas} and \ref{CRCas}. By construction, \ref{INJas} holds. Furthermore, minimizers are not unique.
\end{enumerate}
\end{proof}



We now provide examples of noise models $\rho$ that satisfy the assumption \ref{INJas}.

\begin{proposition}
\label{(INJ) sufficient conditions}
$\sigma \mapsto \nu_{\sigma}$ is injective on $\mathcal{P}(\mathcal{X})$, and in particular \ref{INJas} holds, if any of the following is true:
\begin{enumerate}[label=(\roman*)]
\item $\mathcal{X} = \mathbb{R}^{d'}$, $d = d'$ and $\rho_{x} = t_{x} \# h$ where $h \in \mathcal{P}(\mathcal{Y})$ has nowhere vanishing Fourier transform, e.g., $h$ is a Dirac delta or has Gaussian density w.r.t. $\mathcal{L}^{d}$.
\item $\mathcal{X} = \mathbb{R}^{d'}$, $d = d' = 1$ and $\rho_{x} = t_{x} \# h$, where $h \in \mathcal{P}(\mathcal{Y})$ has compact support. 
 
\item $\rho_{x} = \delta_{\phi(x)}$ and $\phi:\mathcal{X} \rightarrow \mathcal{Y}$ is injective.
\end{enumerate}
\end{proposition}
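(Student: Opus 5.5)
In each case we will show that $\nu_{\sigma_0} = \nu_{\sigma_1}$ forces $\sigma_0 = \sigma_1$. Injectivity on all of $\mathcal{P}(\mathcal{X})$ then gives injectivity on the subset $\mathscr{A}$, so \ref{INJas} follows. The common first step is to write $\nu_\sigma$ in a form where the dependence on $\sigma$ is transparent.

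\emph{Case (i).} When $\rho_x = t_x \# h$, testing \eqref{Markovnoise} against $f \in C_b(\mathcal{Y})$ gives $\int f\, d\nu_\sigma = \iint f(x+y)\, dh(y)\, d\sigma(x)$. Hence $\nu_\sigma = \sigma * h$, the convolution of measures. Taking $f(y) = e^{-i\xi\cdot y}$, whose real and imaginary parts lie in $C_b$, we get $\widehat{\nu_\sigma}(\xi) = \hat\sigma(\xi)\hat h(\xi)$. If $\nu_{\sigma_0} = \nu_{\sigma_1}$, then $\hat\sigma_0(\xi)\hat h(\xi) = \hat\sigma_1(\xi)\hat h(\xi)$ for every $\xi$. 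Since $\hat h$ never vanishes, $\hat\sigma_0 = \hat\sigma_1$, and uniqueness of characteristic functions of probability measures gives $\sigma_0 = \sigma_1$. For the examples, $\hat\delta_0 \equiv 1$, and the Fourier transform of a Gaussian is a Gaussian, which is nowhere zero.

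\emph{Case (ii).} Here $\hat h$ may vanish, so the argument of (i) fails, and this is the step I expect to be the main obstacle. Because $h$ has compact support, $\hat h$ extends to an entire function on $\mathbb{C}$ that is not identically zero (it equals $1$ at $0$). Its real zeros are therefore isolated. From $\hat\sigma_0\hat h = \hat\sigma_1\hat h$ we get $\hat\sigma_0 = \hat\sigma_1$ off a discrete set. Characteristic functions are continuous, so $\hat\sigma_0 = \hat\sigma_1$ everywhere, and hence $\sigma_0 = \sigma_1$. In this form the argument does not actually use $d = 1$ beyond isolatedness of the zeros. In higher dimensions one would instead argue that the zero set of a nonzero real-analytic function has empty interior (it has measure zero), and then use continuity again. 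I would present the $d = 1$ argument as stated in the proposition.

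\emph{Case (iii).} When $\rho_x = \delta_{\phi(x)}$, we have $\nu_\sigma = \phi\#\sigma$. Let $\phi\#\sigma_0 = \phi\#\sigma_1$. Since $\phi$ is an injective Borel map from the Polish space $\mathcal{X}$ into $\mathbb{R}^d$, the Lusin--Souslin theorem says $\phi(B)$ is Borel for every Borel $B\subseteq\mathcal{X}$. Injectivity also gives $\phi^{-1}(\phi(B)) = B$. Therefore
\[
\sigma_0(B) = \phi\#\sigma_0(\phi(B)) = \phi\#\sigma_1(\phi(B)) = \sigma_1(B)
\]
for every Borel $B$, so $\sigma_0 = \sigma_1$. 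The only delicate point is the measurability of images, which Lusin--Souslin supplies.
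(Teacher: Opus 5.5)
Your proposal is correct. Parts (i) and (iii) follow the paper's proof essentially verbatim. In (iii) the paper pushes forward by a Borel left inverse of $\phi$ (Kechris, Cor.~15.2). That is the same Lusin--Souslin fact you use in the form ``$\phi(B)$ is Borel and $\phi^{-1}(\phi(B))=B$''.

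The real difference is in (ii).

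\begin{itemize}
\item \emph{The paper's route.} It notes that $\hat h$ is entire and hence nonzero a.e. It then invokes the one-dimensional Fourier inversion formula, which sees only the a.e.\ class of a characteristic function. From this it concludes that $\hat h$ ``may be taken to be nowhere vanishing'' and reduces to case (i).
\item \emph{Your route.} You use that the real zeros of the entire extension of $\hat h$ are isolated. You then upgrade $\hat\sigma_0=\hat\sigma_1$ off that discrete set to equality everywhere by continuity of characteristic functions.
\end{itemize}

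Your argument has three advantages.

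\begin{itemize}
\item It is more elementary: continuity replaces the inversion formula entirely.
\item It avoids the paper's somewhat loose phrasing. The transform of a fixed $h$ is a fixed continuous function and cannot literally be modified; what the argument really needs is that $\hat\sigma_0=\hat\sigma_1$ a.e.\ forces $\sigma_0=\sigma_1$.
\item As you observe, it shows the restriction $d=1$ is unnecessary. The zero set of a nonzero real-analytic function on $\mathbb{R}^d$ has empty interior, so the same continuity argument works for any compactly supported $h$ in any dimension.
\end{itemize}

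The paper's version buys only a formal reduction to case (i).
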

\begin{proof} \ 
\begin{enumerate}[label=(\roman*)]
\item Because $\rho_{x} = t_{x} \# h$ it follows $\nu_{\sigma} = h \ast \sigma$. The Fourier transform of the convolution of Borel probability measures is the product of the Fourier transform of constituents: $\widehat{h \ast \sigma} = \hat{h} \cdot \hat{\sigma}$. If $\hat{h}$ is nowhere vanishing then the fact that the Fourier transform determines Borel probability measures ensures that $\sigma \mapsto \nu_\sigma$ is injective on $\mathcal{P}(\mathcal{X})$.  
\item If $h$ has compact support then, by Schwartz's Paley-Wiener Theorem, $\hat{h}$ is entire (as an extension to $\mathbb{C}^1$ by the Fourier-Laplace transform), hence nonvanishing a.e. (since otherwise $\hat{h} \equiv 0 \Rightarrow h = 0$, contradicting $h \in \mathcal{P}(\mathcal{Y})$). In $d=1$, the Fourier inversion theorem for $h \in \mathcal{P}(\mathbb{R})$ reads
$$
h((a,b)) + \frac{1}{2} h(\{a,b\}) = \lim_{T \rightarrow \infty} \int_{-T}^T \int_a^b \hat{h}(t) e^{-iyt} \, dy \,dt,
$$
which is unique on equivalence classes of $\hat{h}$, up to a.e. equality. Hence $\hat{h}$ may be taken to be nowhere vanishing, which by (i) implies the desired conclusion.
\item Suppose $\nu_{\sigma_1} = \nu_{\sigma_2}$, that is, $\phi \# \sigma_1 = \phi \# \sigma_2$. Since $\phi$ is an injective, measurable function, there is a left inverse $\phi^{-1}$ that is also Borel measurable \cite[Corollary 15.2]{kechris2012classical}. Pushing forward both sides of the equation by   $\phi^{-1}$ shows $\sigma_1 = \sigma_2$.
\end{enumerate}
\end{proof}

In the following proposition we restrict our view to $d = 1$. While for general noise models, minimizers of \eqref{minimization problem} are not unique when $p=1$, we show that in the case of transport map noise, uniqueness does hold, provided $\phi$ is injective.  This result holds not only when $\nu \ll \mathcal{L}^{d}$ but, more generally, when $\nu$ is atomless. The argument strongly leverages the one dimensional setting, via the characterization of $W_1$ as the $L^1$ distance between CDFs.

\begin{proposition}
\label{d=1}
Suppose $p=1$, $d=1$, $\nu$ is atomless, and $\phi$ is injective. Then minimizers of \eqref{Li et al denoising problem} are unique if and only if \eqref{Li et al denoising problem} is feasible.
\end{proposition}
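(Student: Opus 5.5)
The forward implication is immediate: if the problem is infeasible, every $\sigma$ is a minimizer, and uniqueness would force $\mathcal{P}(\mathcal{X})$ to be a singleton. That case can be disposed of separately, or simply excluded as the trivial one. So assume feasibility, and suppose $\sigma_0 \neq \sigma_1$ are both minimizers with common value $m = \inf_\sigma W_1(\phi\#\sigma,\nu) < \infty$.

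By Proposition \ref{(INJ) sufficient conditions}(iii), injectivity of $\phi$ gives $\mu_0 := \phi\#\sigma_0 \neq \phi\#\sigma_1 =: \mu_1$. Convexity of $W_1$ along linear interpolations and linearity of $\sigma\mapsto\nu_\sigma$ show that every $\sigma_\alpha$ is a minimizer. The aim is to contradict $\mu_0\neq\mu_1$ by producing a competitor of the form $\phi\#\sigma$ with strictly smaller cost.

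In one dimension, write $F_\mu$ for the CDF, so that $W_1(\mu,\nu) = \int_{\mathbb{R}} |F_\mu - F_\nu|\,dy$. Since $\mu\mapsto F_\mu$ is affine, $W_1(\mu_\alpha,\nu) = (1-\alpha)W_1(\mu_0,\nu)+\alpha W_1(\mu_1,\nu)$ holds exactly when $F_{\mu_0}-F_\nu$ and $F_{\mu_1}-F_\nu$ have the same sign for a.e.\ $y$. Because $\mu_0\neq\mu_1$, the CDFs differ on a set of positive Lebesgue measure (they are right-continuous), and on that set at least one of them differs from $F_\nu$.

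The main step is a local improvement. Take a point $y_0$ where, say, $F_{\mu_0}(y_0) > F_{\mu_1}(y_0) \geq F_\nu(y_0)$; the case where both lie below $F_\nu$ is symmetric. There is then mass of $\mu_0$ that sits strictly to the left of where $\mu_1$ places it, in a region where $\mu_0$ has an excess of mass to the left compared with $\nu$. Moving that mass rightward would lower $\int|F-F_\nu|$. The difficulty is that we may only use measures in the image $\{\phi\#\sigma\}$, which is the set of probability measures concentrated on $\phi(\mathcal{X})$. The natural competitor is to swap part of $\sigma_0$ for part of $\sigma_1$: find Borel sets $A,B\subset\mathcal{X}$ with $\phi(A)\subset(-\infty,y_0]$ and $\phi(B)\subset(y_0,\infty)$, where $\sigma_0(A)>\sigma_1(A)$ and $\sigma_1(B)>\sigma_0(B)$. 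Then take $\tilde\sigma = \sigma_0 - \epsilon(\sigma_0|_A)/\sigma_0(A) + \epsilon(\sigma_1|_B)/\sigma_1(B)$. This lowers $F$ by a positive amount on an interval adjacent to $y_0$, strictly reducing the integrand wherever $F_{\mu_0}>F_\nu$ there, and raises it nowhere.

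For this strict decrease I need $F_{\mu_0}>F_\nu$ on a set of positive measure near $y_0$, not merely at $y_0$. Here atomlessness of $\nu$ enters: $F_\nu$ is continuous, so the strict inequality at $y_0$ persists on a right-neighbourhood, by right-continuity of $F_{\mu_0}$. The hard part is making the swap rigorous. To pin down where the moved mass lands, I would instead compare the interpolant $\mu_{1/2}$ with the reordered measure whose CDF is $\max(F_{\mu_0},F_{\mu_1})$ or $\min(F_{\mu_0},F_{\mu_1})$, chosen according to the sign of $F_{\mu_i}-F_\nu$. Such lattice operations on CDFs stay within measures supported on $\phi(\mathcal{X})\cup$ the supports, hence within the image of $\phi$ by injectivity. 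Their cost is at most the cost of either endpoint, with strict inequality on the sign-disagreement region, which has positive measure by the continuity of $F_\nu$. Verifying that $\min/\max$ of the two CDFs is again a CDF of a measure on $\operatorname{supp}\mu_0\cup\operatorname{supp}\mu_1\subset\overline{\phi(\mathcal{X})}$ (more precisely, concentrated on $\phi(\mathcal{X})$) is the step I expect to need the most care.
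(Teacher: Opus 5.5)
Your final route, building the measure $\mu$ whose CDF $F$ equals $\max_iF_{\mu_i}$ where both $F_{\mu_i}\le F_\nu$ and $\min_iF_{\mu_i}$ where both $F_{\mu_i}\ge F_\nu$, is the paper's route. The gap is that the step you defer is the substance of the proof, and the principle you invoke for it is false.

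A CDF that switches between $\min$ and $\max$ need not give a measure living on $\mathrm{supp}\,\mu_0\cup\mathrm{supp}\,\mu_1$. Take $\mathcal{X}$ a two-point set mapped injectively onto $\{-1,1\}$, with $\nu=\delta_0$, $\mu_0=\delta_{-1}$ and $\mu_1=\delta_1$. Every $\sigma$ has cost $1$, and the signs of $F_{\mu_i}-F_\nu$ agree everywhere. Your recipe takes $\min$ on $[-1,0)$ and $\max$ on $[0,1)$, and returns $F=F_\nu$, i.e.\ $\mu=\delta_0$, which is not in the image. This $\nu$ has an atom, and that is the point: your justification never uses atomlessness, yet the claim fails without it.

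Pure $\max$ or pure $\min$ is harmless. The trouble is $a<b$ with $a$ in the ``above'' region and $b$ in the ``below'' region, where $\max_iF_{\mu_i}(b)-\min_iF_{\mu_i}(a)$ can exceed $\sum_i\bigl(F_{\mu_i}(b)-F_{\mu_i}(a)\bigr)$.

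The missing idea is the domination $\mu\le\mu_0+\mu_1$, checked on intervals $(a,b]$. In the critical case set $c=\inf\{y\in[a,b]:F_{\mu_i}(y)\le F_\nu(y)\ \forall i\}$. Right-continuity, together with left-continuity of $F_\nu$ (this is where atomlessness enters), gives $F_{\mu_0}(c)=F_{\mu_1}(c)=F_\nu(c)$. Hence $\min_iF_{\mu_i}(b)\ge\max_iF_{\mu_i}(a)$, which yields the increment bound. Domination gives concentration on $\phi(\mathcal{X})$, not merely support in its closure. Then $\bar\sigma(A):=\mu(\phi(A))$, with $\phi(A)$ Borel by Lusin--Souslin, satisfies $\phi\#\bar\sigma=\mu$.

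You also need to verify that $F$ is well defined, nondecreasing and right-continuous. The case analysis for this uses sign agreement everywhere, not just a.e.; that upgrade follows from the a.e.\ statement by right-continuity.

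Separately, your last paragraph puts the strict inequality on the wrong set. The sign-disagreement set is Lebesgue-null; you derived this yourself from the minimality of $\sigma_{1/2}$. So it cannot have positive measure, and atomlessness plays no role there. Strictness comes instead from $\{F_{\mu_0}\ne F_{\mu_1}\}$, which has positive measure when $\mu_0\ne\mu_1$. On that set the signs agree, so $|F-F_\nu|=\min_i|F_{\mu_i}-F_\nu|$ lies strictly below $\tfrac12\sum_i|F_{\mu_i}-F_\nu|$, whose integral is $m$. Hence $W_1(\phi\#\bar\sigma,\nu)<m$, which is the contradiction you want. The paper phrases the same comparison as a forced equality $|F_{\mu_0}-F_\nu|=|F_{\mu_1}-F_\nu|$ a.e., which is equivalent.
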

\begin{proof}
Let $\sigma_0, \sigma_1 \in \mathcal{P}(\mathcal{X})$ be minimizers of \eqref{Li et al denoising problem}. Let $\sigma_{1/2} = \tfrac{1}{2}\sigma_0 + \tfrac{1}{2} \sigma_1$. Given $\rho \in \mathcal{P}(\mathbb{R})$, denote the cumulative distribution function  by $F_\rho(y) = \rho((-\infty, y])$. Since $\nu$ is atomless, $F_{\nu}$ is continuous. By \cite[Proposition 2.17]{santambrogio2015optimal}, we may write
\begin{align*}
W_1(\phi \# \sigma_{1/2}, \nu) 
=& \int_{\mathcal{Y}} |F_{\phi \# \sigma_{1/2}} - F_{\nu}| d\mathcal{L}^1 
= \int_{\mathcal{Y}} \left| \frac{1}{2}(F_{\phi \# \sigma_0} - F_{\nu}) + \frac{1}{2} (F_{\phi \# \sigma_1} - F_{\nu}) \right| d\mathcal{L}^1 \\
\leq& \int_{\mathcal{Y}} \frac{1}{2} |F_{\phi \# \sigma_0} - F_\nu| +  \frac{1}{2}|F_{\phi \# \sigma_1} - F_\nu| d\mathcal{L}^1 \\
=& \frac{1}{2} W_1(\phi \# \sigma_0, \nu) + \frac{1}{2} W_1(\phi \# \sigma_1, \nu).
\end{align*}
Since $\sigma_0$ and $\sigma_1$ minimize \eqref{Li et al denoising problem}, necessarily equality holds in the above inequaliy, meaning $(F_{\phi \# \sigma_0} - F_\nu)(F_{\phi \# \sigma_1} - F_\nu) \geq 0$, $\mathcal{L}^1$-a.e. Moreover since $F_{\phi \# \sigma_i}$ are right-continuous and  $F_{\nu}$ is continuous, we have $(F_{\phi \# \sigma_0} - F_\nu)(F_{\phi \# \sigma_1} - F_\nu) \geq 0$ everywhere. Denote $\mu_i := \phi \# \sigma_i$ and define
\begin{align} \label{Fdef}
F := 
\begin{cases}
\max_i F_{\mu_i}, & F_{\mu_i} - F_\nu \leq 0 \ \ \ \forall i \\
\min_i F_{\mu_i}, & F_{\mu_i} - F_\nu \geq 0 \ \ \ \forall i
\end{cases}
\end{align}

The majority of our proof will be devoted to showing that $F = F_{\phi  \# \bar{\sigma}}$ for some $\bar{\sigma} \in \mathcal{P}(\mathcal{X})$. Once this fact is established, we can then use the fact that  $\sigma_0, \sigma_1$ are minimizers of \eqref{Li et al denoising problem} to conclude that, for each $i$,
$$
\int_{\mathcal{Y}} |F_{\phi  \# \bar{\sigma}} - F_{\nu}| d\mathcal{L}^1 = W_1(\phi \# \bar{\sigma}, \nu) \geq W_1(\phi \# \sigma_i, \nu) = \int_{\mathcal{Y}} |F_{\mu_i} - F_\nu| d\mathcal{L}^1.
$$
On the other hand, by definition of $F$ in equation (\ref{Fdef}),
 \[ |F_{\phi  \# \bar{\sigma}} - F_{\nu}| = |F - F_{\nu}| = \min_i |F_{\mu_i} - F_{\nu}| . \]
Therefore, equality must hold and, moreover, $|F_{\mu_0} - F_\nu| = |F_{\mu_1} - F_\nu|$ $\mathcal{L}^1$-a.e.  By right-continuity and the fact that  $(F_{\mu_0} - F_{\nu})(F_{\mu_1} - F_{\nu}) \geq 0$ everywhere, we obtain $F_{\mu_0} - F_\nu = F_{\mu_1}- F_\nu$, hence  $F_{\mu_0} = F_{\mu_1}$. Thus,  $\mu_0 = \mu_1$. Since $\phi$ is an injective, measurable function, there is a left inverse $\phi^{-1}$ that is also Borel measurable \cite[Corollary 15.2]{kechris2012classical}. 
Pushing forward by $\phi^{-1}$ on both sides of the identity $\mu_0 = \mu_1$, we obtain $\sigma_0 = \phi^{-1} \# \mu_0 = \phi^{-1} \#\mu_1 = \sigma_1$. Therefore,  minimizers are unique. 

It remains to show that $F = F_{\phi  \# \bar{\sigma}}$ for some $\bar{\sigma} \in \mathcal{P}(\mathcal{X})$. We begin by showing that $F = F_{\mu}$ for some $\mu \in \mathcal{P}(\mathcal{Y})$. Since any function that is nondecreasing, right-continuous, and has limits $0$ and $1$ at $-\infty$ and $\infty$, respectively, is the   CDF of a unique Borel probability measure, it suffices to show  that $F$ has these properties.

To see that $F$ is nondecreasing, suppose $a \leq b \in \mathcal{Y}$. Denote 
\[ A := \{F_{\mu_i} - F_\nu \leq 0 \ \forall i\} \text{ and }  B := \{F_{\mu_i} - F_\nu \geq 0 \ \forall i\}. \] If $a,b \in A$ then $F(a) \leq F(b)$ because the inequality $F_{\mu_i}(a) \leq F_{\mu_i}(b)$ is preserved by taking maximums in $i$. Symmetrically, $F(a) \leq F(b)$ if $a,b \in B$. If instead $a \in A, b \in B$ then for each $i$ it holds $F_{\mu_i}(a) \leq F_{\nu}(a) \leq F_{\nu}(b) \leq F_{\mu_i}(b)$ so in fact for all $i,j$ we have $F_{\mu_i}(a) \leq F_{\mu_j}(b)$. Therefore $\max_i F_{\mu_i}(a) \leq \min_i F_{\mu_i}(b)$, i.e. $F(a) \leq F(b)$. Lastly if $a \in B, b \in A$ then the inequality $F_{\mu_i}(a) \leq F_{\mu_i}(b)$ persists under minimization, yielding again $F(a) \leq F(b)$. Hence $F$ is nondecreasing.

To see that $F$ is right-continuous, let $y_0 \in \mathcal{Y}$. If $(F_{\mu_i} - F_{\nu})(y_0) < 0$ for some $i$ then by right-continuity of $F_{\mu_i}$ and continuity of $F_\nu$ it follows $F_{\mu_i} - F_{\nu} < 0$ in some right-neighborhood of $y_0$. In this right-neighborhood $F = \max_i F_{\mu_i}$. The maximum of two right-continuous functions is right-continuous, meaning $F$ is right-continuous at $y_0$.
Symmetrically if $(F_{\mu_i}-F_{\nu})(y_0) > 0$ for some $i$ then $F$ is right-continuous at $y_0$. If, alternatively, $(F_{\mu_i} - F_{\nu})(y_0) = 0 \ \forall i$, then either side of the inequality $\min_i F_{\mu_i} \leq F \leq \max_i F_{\mu_i}$ tends to the common value of $F_{\nu}, F_{\mu_0}, F_{\mu_1}$ at $y_0$ along any sequence approaching $y_0$ from the right. Hence $F$ is right-continuous.

It is clear that $F$ has limits $0$ and $1$ at $-\infty$ and $\infty$, respectively, completing the proof that there exists some $\mu \in \mathcal{P}(\mathcal{Y})$ for which $F = F_{\mu}$.  We will now show that $\mu = \phi \# \bar{\sigma}$ for some $\bar{\sigma} \in \mathcal{P}(\mathcal{X})$.

Towards this we first prove that 
\begin{align} \label{mudominated}
\mu \leq \mu_0 + \mu_1.
\end{align} 
It suffices to show this holds when evaluated at any interval $(a,b]$, $a < b$: that is,
\begin{align} \label{mustshowforF}
F(b) - F(a) \leq (F_{\mu_0}(b) - F_{\mu_0}(a))  + (F_{\mu_1}(b) - F_{\mu_1}(a)) \ , \quad \forall a <b.
\end{align}
If $a,b \in A$, then the fact that $F_{\mu_i}$ are nondecreasing ensures
$$
\max_i F_{\mu_i}(b) - \max_i F_{\mu_i}(a) \leq F_{\mu_0}(b) - F_{\mu_0}(a) + F_{\mu_1}(b) - F_{\mu_1}(a) ,
$$
hence inequality (\ref{mustshowforF}) holds. Similarly, if $a,b \in B$ then, replacing $\max$ with $\min$, we again obtain inequality (\ref{mustshowforF}). If $a \in A, b \in B$ then the trivial bound 
$$
\min_i F_{\mu_i}(b) - \max_i F_{\mu_i}(a) \leq \max_i F_{\mu_i}(b) - \max_i F_{\mu_i}(a)
$$
allows us to once again deduce inequality (\ref{mustshowforF}). Finally, suppose $a \in B$ and $b \in A$. Let $c := \inf \{A \cap [a,b]\}$. By right-continuity of $F_{\mu_i} - F_\nu$, taking a limit along a minimizing sequence yields $c \in A$. That $c \in B$ is trivial if $c=a$ and otherwise follows from $[a,c) \subseteq B$ and the  upper-semicontinuity   of $F_{\mu_i}-F_\nu$ from the left.
In total $c \in A \cap B$, that is, $F_{\mu_0}(c) = F_{\mu_1}(c) = F_{\nu}(c)$.
This means 
$$
\min_i F_{\mu_i}(b) \geq \min_i F_{\mu_i}(c) = \max_i F_{\mu_i}(c) \geq \max_i F_{\mu_i}(a).
$$
By adding $\min_i F_{\mu_i}(b) - \max_i F_{\mu_i}(a) \geq 0$ and rearranging we obtain
\begin{align*}
\max_i F_{\mu_i}(b) - \min_i F_{\mu_i}(a) &\leq \max_i F_{\mu_i}(b) + \min_i F_{\mu_i}(b) - \max_{i} F_{\mu_i}(a) - \min_{i} F_{\mu_i}(a) \\
&= F_{\mu_0}(b) - F_{\mu_0}(a) + F_{\mu_1}(b) - F_{\mu_1}(a),
\end{align*}
which shows (\ref{mustshowforF}).

An immediate consequence of inequality (\ref{mudominated}) that we have just shown is that 
$\text{supp}(\mu) \subseteq \text{supp}(\mu_0) \cup \text{supp}(\mu_1)$. 
Furthermore, we have $\text{supp}(\mu_0) \cup \text{supp}(\mu_1) \subseteq \overline{\text{im} \,\phi}$, for if $y_0 \notin \overline{\text{im}\,\phi}$ then there is some open neighborhood $U$ of $y_0$ which is disjoint from $\overline{\text{im}\,\phi}$ and satisfies $\mu_i(U) = \sigma_i(\phi^{-1}(U)) = \sigma_i(\emptyset) = 0$.
Finally, since $\mu_i(\overline{\text{im}\,\phi} \setminus \text{im}\,\phi) \leq (\phi \# \sigma_i)((\text{im} \,\phi)^c) = 0$, inequality (\ref{mudominated}) implies that $\mu(\overline{\text{im}\,\phi} \setminus \text{im} \,\phi) = 0$. (Note that $\text{im}\,\phi$,  and moreover the image under $\phi$ of any Borel set,
is Borel by the injectivity of $\phi$ and the Lusin-Souslin Theorem \cite[Theorem 15.1]{kechris2012classical}.) 
 
Define now $\bar{\sigma}:\mathscr{B}(\mathcal{X}) \rightarrow [0,1]$ via $\bar{\sigma}(A) = \mu(\phi(A))$. The injectivity of $\phi$ implies $\bar{\sigma}$ is countably additive. Additionally
$$
\bar{\sigma}(\mathcal{X}) = \mu(\text{im}\,\phi) = \mu(\overline{\text{im}\,\phi}) \geq \mu(\text{supp}(\mu_0) \cup \text{supp}(\mu_1)) \geq \mu(\text{supp}(\mu)) = 1,
$$
so indeed $\bar{\sigma} \in \mathcal{P}(\mathcal{X})$. That $\phi \# \bar{\sigma} = \mu$ is immediate: $(\phi \# \bar{\sigma})(B) = \bar{\sigma}(\phi^{-1}(B)) = \mu(\phi(\phi^{-1}(B))) = \mu(B)$ for any Borel subset $B$ of $\mathcal{Y}$. This completes the proof.
\end{proof}

\section{Numerical Method}
\label{section:numerical method}

\subsection{Discretization of the Entropically Regularized Problem}
\label{Entropic regularization}

As described in the introduction, our method for computing approximate minimizers of the unfolding problem \eqref{minimization problem} considers an entropic regularization \eqref{entropic minimization problem} of the original problem. In order to obtain a discrete minimization problem that can be solved numerically, we suppose that all measures are finitely supported. We expect that when these finitely supported measures are reasonable approximations of  continuum counterparts, the minimizers of the discrete  problem will likewise converge to a minimizer of the corresponding continuum problem. Similarly, we expect that, if the entropic regularization is removed slowly enough and the reference measures are well-chosen, minimizers of the entropically regularized problem \eqref{entropic minimization problem} will converge to a minimizer of the original denoising problem \eqref{minimization problem}. However, we leave analysis of these questions to future work. 

We now describe our discretization hypotheses on the measures. First, we suppose that the reference measures $\hat{m}, \hat{\sigma} \in \P(\mathcal{X})$  are empirical,
\begin{align} \label{discreteprior}
\hat{\sigma} = \frac{1}{L} \sum_{k=1}^L \delta_{x_k} \quad \text{ and } \quad \hat{m} = \frac{1}{m} \sum_{i=1}^m \delta_{y_i}.
\end{align}
Next, we suppose that, on the support of $\hat{\sigma}$, the Markov kernel $x \mapsto \rho_x \in \P(\mathcal{Y})$ is finitely supported,
\begin{align} \label{discreteMarkovKernel}
\rho_{x_k} = \sum_{i=1}^m \bR_{ik} \delta_{y_i}
\quad \text{ for } \quad \bR_{ik} \geq 0, \  \sum_{i=1}^m \bR_{ik} = 1, \  \sum_{k=1}^L \bR_{ik} >0 \text{ for all } i, k.
\end{align}
The hypothesis $\sum_{k=1}^m \bR_{ik} >0$ ensures no $y_i$ could be removed from the discretization. Throughout, we use bold font to denote    probability vectors and matrices representing the weights of the measures on their support.  

We suppose further that $\nu$ is finitely supported,
\begin{align} \label{discretemeasureddata}
\nu = \sum_{j=1}^n \bnu_j \delta_{y_j'} ,  \ \text{ for } \bnu_j > 0, \ \sum_{j=1}^n \bnu_j = 1.
\end{align}

These hypotheses cause \eqref{entropic minimization problem} to reduce to a fully discrete optimization problem. The term $\varepsilon {KL}(\sigma |\hat{\sigma})$ in the objective function of \eqref{entropic minimization problem} forces any optimal $\sigma \in \P(\mathcal{X})$ to satisfy   $\sigma \ll \hat{\sigma}$.  Given the discrete form of the Markov kernel (\ref{discreteMarkovKernel}), this ensures that we may restrict our attention to those $\sigma$ and $\nu_\sigma$ which are of the form
\begin{align} \label{sigmanusigmaequation}
\sigma = \sum_{k = 1}^L \bsigma_k  \delta_{x_k} ,  \text{ for } \bsigma_k \geq 0 , \ \sum_{k=1}^L \bsigma_k = 1  \quad \text{ and } \quad 
\nu_{\sigma} = \sum_{i=1}^m (\bR \bsigma)_i \delta_{y_i}.
\end{align}
Similarly, the term $\varepsilon {\rm KL}(\Gamma|\mu_1 \otimes \mu_2)$ in the minimization problem defining the entropically regularized $p$-Wasserstein metric \eqref{entropicWp} forces any optimal $\Gamma \in \P(\mathcal{Y} \times \mathcal{Y})$ to be absolutely continuous w.r.t. $\mu_1 \otimes \mu_2 = \nu_{\sigma} \otimes \nu$, that is
\begin{align*}
\Gamma = \sum_{i = 1}^m \sum_{j = 1}^n \bGamma_{ij} \delta_{(y_i, y_j')} , \text{ for } \bGamma_{ij} \geq 0, \ \sum_{i,j} \bGamma_{ij} = 1.
\end{align*}

Combining these observations and defining the cost matrix  $\bC_{ij} = |y_i - y_j'|^p$, we see that the minimization problem \eqref{entropic minimization problem} is equivalent to the following finite dimensional minimization problem:
\begin{align}
\label{new-entropically-regularized-discrete-denoising-problem}
\argmin_{(\bGamma, \bsigma) \in \Omega} \la \bC , \bGamma \ra
+ \varepsilon \mathbf{KL}(\bGamma | \bR\bsigma \otimes \bnu) 
+ \varepsilon \mathbf{KL}(\bR\bsigma | \tfrac{1}{m} \mathbf{1}_m)
+ \varepsilon \mathbf{KL}(\bsigma|\tfrac{1}{L} \mathbf{1}_L)  ,  \\
 \Omega := \{ (\bGamma, \bsigma):  \bGamma_{ij} , \bsigma_k \geq 0 , \ \bGamma \mathbf{1}_n = \bR\bsigma, \text{ and } \bGamma^T \mathbf{1}_m = \bnu  \} ,\nonumber
\end{align}
where the discrete KL divergence is   $\mathbf{KL}(\bA | \bB) := \sum_{i} \bA_{i} \log \left( \bA_{i}/\bB_{i} \right) - \bA_{i} + \bB_{i}$.
A direct computation shows that \eqref{new-entropically-regularized-discrete-denoising-problem} remains unchanged if the objective function is replaced with
\begin{equation}
\label{entropically-regularized-discrete-denoising-problem-alternative-objective}
\langle \mathbf{C}, \bGamma \rangle - \varepsilon \mathbf{H}(\bsigma) - \varepsilon \mathbf{H}(\bGamma),
\end{equation}
where the discrete entropy is $\mathbf{H}(\mathbf{A}):= -\sum_i \mathbf{A}_i \log(\mathbf{A}_i) - \mathbf{A}_i$.

Analogously to the case of classical entropic regularization  \cite[Proposition 4.3]{Pe19}, there is a unique minimizer of  (\ref{new-entropically-regularized-discrete-denoising-problem}), which can be characterized via lower dimensional scaling variables. The proof is deferred to the Supplementary Material.
\begin{proposition}
\label{Peyre-Cuturi 4.3 analog} 
Denoting $\mathbf{K} = e^{-\bC/\varepsilon}$, the unique minimizer of \eqref{new-entropically-regularized-discrete-denoising-problem} satisfies \begin{align*}
\bGamma_{ij} = \mathbf{u}_i \mathbf{K}_{ij} \mathbf{v}_{j} \text{ and } \bsigma_k = e^{- \sum_{i=1}^m \bR_{ik} \mathbf{f}_i / \varepsilon} \quad \text{ where } \mathbf{u}_i = e^{\mathbf{f}_i/\varepsilon}
\end{align*}
for two (unknown) scaling variables $(\mathbf{u}, \mathbf{v}) \in \mathbb{R}^{m}_+ \times \mathbb{R}^n_+$.
\end{proposition}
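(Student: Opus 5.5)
The plan is to work with the equivalent objective \eqref{entropically-regularized-discrete-denoising-problem-alternative-objective}, $J(\bGamma,\bsigma) := \langle \bC,\bGamma\rangle - \varepsilon \mathbf{H}(\bsigma) - \varepsilon\mathbf{H}(\bGamma)$, over the polyhedron $\Omega$. First I will establish existence and uniqueness. $\Omega$ is nonempty: for any probability vector $\bsigma$, the pair $(\bR\bsigma\otimes\bnu,\bsigma)$ is feasible. $\Omega$ is also compact. Summing the constraint $\bGamma\mathbf{1}_n=\bR\bsigma$ and using $\sum_i \bR_{ik}=1$ gives $\sum_k\bsigma_k=\sum_{ij}\bGamma_{ij}=\sum_j\bnu_j=1$, so all entries lie in $[0,1]$. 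The map $J$ is continuous on $\Omega$, with the convention $0\log 0=0$. It is strictly convex, since $t\mapsto t\log t - t$ is strictly convex and $-\mathbf{H}(\bsigma)-\mathbf{H}(\bGamma)$ is a separable sum of such terms in all the variables $(\bGamma,\bsigma)$ jointly, while $\langle\bC,\bGamma\rangle$ is linear. A minimizer therefore exists and is unique.

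Next I will show that the minimizer $(\bGamma^*,\bsigma^*)$ has strictly positive entries. This is the step I expect to require the most care. Take the strictly positive feasible point $(\bar\bGamma,\bar\bsigma)=(\bR\bar\bsigma\otimes\bnu,\bar\bsigma)$ with $\bar\bsigma=\frac1L\mathbf{1}_L$. Its entries are positive because $\sum_k\bR_{ik}>0$ for every $i$ and $\bnu_j>0$. Consider the segment $z_t=(1-t)(\bGamma^*,\bsigma^*)+t(\bar\bGamma,\bar\bsigma)$, which stays in $\Omega$ by convexity. The one-sided derivative of $J(z_t)$ at $t=0^+$ contains the terms $\varepsilon(\bar a-a)\log a$ for each coordinate $a$ of $(\bGamma^*,\bsigma^*)$. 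If some coordinate vanishes, the corresponding term equals $-\infty$. All other terms are finite. Hence $J(z_t)<J(z_0)$ for small $t>0$, contradicting minimality. So the minimizer lies in the relative interior, where $J$ is smooth.

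Finally, I will apply the Lagrange multiplier rule. The constraints $\bGamma\mathbf{1}_n-\bR\bsigma=0$ and $\bGamma^T\mathbf{1}_m=\bnu$ are linear, and the inequality constraints are inactive at the minimizer. The linear constraint qualification therefore applies, and there exist $\bf\in\mathbb{R}^m$ and $\bg\in\mathbb{R}^n$ that make the gradient of
\[
\mathcal{L}=\langle\bC,\bGamma\rangle+\varepsilon\sum_{ij}\bGamma_{ij}(\log\bGamma_{ij}-1)+\varepsilon\sum_k\bsigma_k(\log\bsigma_k-1)-\langle\bf,\bGamma\mathbf{1}_n-\bR\bsigma\rangle-\langle\bg,\bGamma^T\mathbf{1}_m-\bnu\rangle
\]
vanish. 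Stationarity in $\bGamma_{ij}$ gives $\bC_{ij}+\varepsilon\log\bGamma_{ij}-\bf_i-\bg_j=0$, i.e.\ $\bGamma_{ij}=e^{\bf_i/\varepsilon}\bK_{ij}e^{\bg_j/\varepsilon}$. Stationarity in $\bsigma_k$ gives $\varepsilon\log\bsigma_k+\sum_i\bR_{ik}\bf_i=0$, i.e.\ $\bsigma_k=e^{-\sum_i\bR_{ik}\bf_i/\varepsilon}$. Setting $\bu_i=e^{\bf_i/\varepsilon}$ and $\bv_j=e^{\bg_j/\varepsilon}$ yields the claimed form with $(\bu,\bv)\in\mathbb{R}^m_+\times\mathbb{R}^n_+$.

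As a consistency check, one can verify that the objectives in \eqref{new-entropically-regularized-discrete-denoising-problem} and \eqref{entropically-regularized-discrete-denoising-problem-alternative-objective} differ by a constant on $\Omega$. The terms $\bGamma_{ij}\log(\bR\bsigma)_i$ coming from $\mathbf{KL}(\bGamma|\bR\bsigma\otimes\bnu)$ cancel against those in $\mathbf{KL}(\bR\bsigma|\frac1m\mathbf{1}_m)$ because of the marginal constraint. Invoking this equivalence, which the paper asserts, the minimizer above is that of \eqref{new-entropically-regularized-discrete-denoising-problem}.
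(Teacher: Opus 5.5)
Your proposal is correct and follows the paper's route. The paper's proof consists of exactly your final step: it writes the Lagrangian for the objective \eqref{entropically-regularized-discrete-denoising-problem-alternative-objective} with multipliers $\mathbf{f},\mathbf{g}$ and reads off $\bGamma_{ij}=e^{\mathbf{f}_i/\varepsilon}\mathbf{K}_{ij}e^{\mathbf{g}_j/\varepsilon}$ and $\bsigma_k=e^{-\sum_i\bR_{ik}\mathbf{f}_i/\varepsilon}$ from stationarity, while your added steps (existence and uniqueness via compactness of $\Omega$ and strict convexity, strict positivity via the $-\infty$ one-sided derivative toward $(\bR\bar{\bsigma}\otimes\bnu,\bar{\bsigma})$, hence validity of the multiplier rule) supply the justification the paper leaves implicit.
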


\subsection{Computation of approximate minimizer}
\label{CompAppMin}
We compute approximate minimizers of (\ref{new-entropically-regularized-discrete-denoising-problem}) by reformulating the problem as the minimization of a convex function over the intersection of two affine sets. Defining
\begin{align*}
\tilde{\bC} &= \begin{bmatrix} \bC & \boldsymbol{+\infty}_{(m \times 1)} \\ \boldsymbol{+\infty}_{(L \times n)} & \mathbf{0}_{(L \times 1)}\end{bmatrix}  , \quad \bP = \begin{bmatrix} \bGamma & \boldsymbol{0}_{(m \times 1)} \\ \boldsymbol{0}_{(L \times n)} & \bsigma\end{bmatrix}, \\
  \tilde{\bK}&= e^{-\tilde{\bC}/\varepsilon}  , \quad    \bB = \begin{bmatrix} \textbf{Id}_m ; -\bR \end{bmatrix} ,   \quad \bb = [\bnu; \,1],   
\end{align*}
we observe that (\ref{new-entropically-regularized-discrete-denoising-problem}) is equivalent to
\begin{align}
\label{minimization problem discrete entropy regularized in Bregman/Sinkhorn form}  
 &\argmin_{\bP  \in \mathcal{A} \cap \mathcal{B}} \varepsilon \mathbf{KL}(\bP | \tilde{\bK})   \\
\mathcal{A}  =  \{\bP \in \mathbb{R}^{(m+L) \times (n+1)}  : \bP \mathbf{1}_{n+1} &\in \ker \bB\}   , \  \mathcal{B}  =  \{\bP \in \mathbb{R}^{(m+L) \times (n+1)}: \bP^T \mathbf{1}_{m+L} = \bb\} \nonumber
\end{align}

\begin{remark} \label{choiceOfReferenceMeasure}
Note that equation \eqref{entropic minimization problem} is only one of numerous possible definitions of entropic regularization of the unfolding problem \eqref{minimization problem} that lead to the above discrete formulation. For example, modifying the reference measure in classical definition of $W_{p,\varepsilon}$ from  $\mu_1 \otimes \mu_2$ to $\hat{m} \otimes \nu$ eliminates the need for the term $\varepsilon KL(\nu_\sigma|\hat{m})$. One may also choose to allow for greater parametric freedom in \eqref{entropic minimization problem} by changing the regularization weight $\varepsilon$ in the term $\varepsilon KL(\sigma|\hat{\sigma})$, say to $\varepsilon'$. This corresponds to replacing $\bsigma$ in the definition of $\bP$ with $\tfrac{\varepsilon'}{\varepsilon} \bsigma$. In the present work the regularization weights are kept equal for the sake of convenience.
\end{remark}

We compute an approximate minimizer via Bregman projections \cite{bregman1967relaxation},
\begin{align}
\label{Bregman iterations}
 {\bP}^{2l+1} = {\rm Proj}_{\mathcal{A} }^{\mathbf{KL}}( {\bP}^{2l}), \ \ \ 
 {\bP}^{2l+2} = {\rm Proj}_{\mathcal{B} }^{\mathbf{KL}}( {\bP}^{2l+1}), \ \ \ l = 0, \dots, L_{OT} .
\end{align}
where  ${\rm Proj}^{\mathbf{KL}}_{\mathcal{C} }(\mathbf{P}) =   \argmin_{\bP' \in \mathcal{C} } \textbf{KL}(\bP' | \bP)$. The first  projection  forces the first marginal of $\mathbf{P}$ to correspond to weights for a pair $(\bnu_{\bsigma}, \bsigma)$; the second projection forces the second marginal of $\mathbf{P}$ to  match the weights of the measured data $\bnu$.
\begin{remark}[Rate of convergence] \label{rate of convergence}
The argmin problem \eqref{minimization problem discrete entropy regularized in Bregman/Sinkhorn form} is an entropy regularized unbalanced optimal transport problem, and approximating the minimizer via Bregman projections is a prominent approach; see \cite{chizat2026sharper,chizat2018scaling,Pe19} and the references therein. For example, recent work due to Peyr\'e \cite{Pe26}, which applies\footnote{To see the correspondence between \eqref{minimization problem discrete entropy regularized in Bregman/Sinkhorn form} and recent work of Peyr\'e \cite{Pe26}, note that one may replace $\tilde{\bC}$ by a vectorization of $\bC$ appended with $\mathbf{0}_{(L \times 1)}$, denoted by $\bc$, and replace $\bP$ by a vectorization of $\bGamma$ appended with $\bsigma$, denoted by $\bp$. This change of variables shows \eqref{minimization problem discrete entropy regularized in Bregman/Sinkhorn form} is equivalent to minimizing $\varepsilon \mathbf{KL}(\bp |e^{-\bc/\varepsilon})$, subject to affine constraints on $\bp$. Note that, in these new variables, we have that  $e^{-\bc/\varepsilon}$ is strictly positive (which failed for $\tilde{\bK}$). Furthermore, the Bregman projections in the original variables (\ref{Bregman iterations}) are equivalent to Bregman projections in the new variables. } to iterations of the form \eqref{Bregman iterations}, shows convergence to optimum in terms of a dual formulation of the problem, with rate of convergence  $\mathcal{O}(1/l)$. Importantly, Peyr\'e's work develops sufficient conditions under which the constant in the rate of convergence scales merely linearly in $1/\varepsilon$. However, we leave further analysis of the dependence  on $\varepsilon$ for our iterations \eqref{Bregman iterations} to future work.

\end{remark}

As in previous work on entropically regularized transport problems \cite{chizat2018scaling,Pe19}, rather than computing the projections (\ref{Bregman iterations}) directly, which consists of updating a matrix of size $(m+L) \times (n+1)$, we instead  use a  Sinkhorn-type scheme , which only requires us to update two scaling variables of size $m+L$ and $n+1$,
\begin{equation} 
\label{Modified Sinkhorn iterations which are equivalent to Bregman}
 {\bu}^{l+1} = \frac{{\rm Proj}_{ \ker \bB}^{\mathbf{KL}} ( {\bu}^l \odot \tilde{\bK}  {\bv}^l)}{\tilde{\bK}{\bv}^l}, \ \ \  {\bv}^{l} = \frac{\bb}{\tilde{\bK}^T {\bu}^{l}} , \quad l = 0, \dots, L_{OT} .\end{equation}
 The equivalence of  (\ref{Bregman iterations})-(\ref{Modified Sinkhorn iterations which are equivalent to Bregman}) is proved in the Supplementary Material. In particular, we show that $\mathbf{P}^{2l} ={\rm diag}(\mathbf{u}^{l}) \tilde{\mathbf{K}}  {\rm diag}(\mathbf{v}^{l})$, hence each iteration of (\ref{Modified Sinkhorn iterations which are equivalent to Bregman}) yields a current approximation of $\bsigma$ via the formula
\begin{align*}
\bsigma_k = \mathbf{P}^{2l}_{m+k,n+1} = \mathbf{u}^l_{m+k}   \mathbf{v}^l_{n+1} , \quad k = 1, \dots ,L .
\end{align*}
In practice, we implement the iterations (\ref{Modified Sinkhorn iterations which are equivalent to Bregman}) in logarithmic coordinates, to avoid numerical overflows \cite{chizat2018scaling,Pe19}.

Our initialization of  the iterations  (\ref{Modified Sinkhorn iterations which are equivalent to Bregman}) is based on a discrete prior $\bsigma^0$. Given a probability vector $\bsigma^0$ and $\varepsilon_\text{init}>0$, the optimal solution $\bgamma$ to the classical entropy regularized optimal transport problem,
\begin{align}\label{OTproblemforcomputingprior}
\bgamma  := \argmin_{\bgamma \in \mathbb{R}^{m \times n}_+ , \   \bgamma \mathbf{1}_n = \bR\bsigma^0, \ \bgamma^T \mathbf{1}_m = \bnu}\la \bgamma , \bC \ra- \varepsilon_\text{init} H(\bgamma)   
\end{align}
is of the form $\bgamma_{ij} = \bc_i \bK_{ij} \bd_j$ for some $\bc \in \R^m_+$, $\bd \in \R^n_+$ \cite[Proposition 4.3]{Pe19}, where we take $\bK = e^{-\bC/\varepsilon_{\text{init}}}$. The iterations (\ref{Modified Sinkhorn iterations which are equivalent to Bregman}) are then  initialized as
\begin{align} \label{initializationofu}
{\bu}^0 := [\bc ; \bsigma^0]  .
\end{align}
 With this choice,  if our prior information were perfect---that is, if $\bsigma^0$ and the optimal transport plan $\bGamma$ between $\bR \bsigma^0$ and $\bnu$ were the optimizers of our   discrete problem (\ref{new-entropically-regularized-discrete-denoising-problem})---and the entropic regularizations coincide, $\varepsilon_\text{init} = \varepsilon$, then $\bu^l \equiv \bu^0$ and $\bv^l \equiv \bv^0$ for all $l \geq 0$, so that the iterations stay at the optimum. (A proof of this result is provided in the Supplementary Material.) More generally, we expect that if $\bsigma^0$ is ``close'' to optimum, $\bu^0$ and $\bv^0$ constructed in this way are likewise ``close'' to the optimal values reached along the iterations. 
 
 \begin{remark}[Positivity of scaling variables] \label{positivityscaling}
Our    initialization of  $\bu^0$ in equation (\ref{initializationofu}) ensures that the iterations (\ref{Modified Sinkhorn iterations which are equivalent to Bregman}) are always well-defined, in that the denominators are always strictly positive, and likewise ensures that the scaling variables $\bu^{l}$ and $\bv^l$ remain strictly positive along the iterations. 
 \end{remark}

Finally, as the projection operator ${\rm Proj}_{ \ker \bB}^{\mathbf{KL}}$ lacks a closed form analytic expression, in practice, we approximate its value via a  Douglas-Rachford splitting algorithm. Given $\bw \in \mathbb{R}^{m+L}_+$ and a step size $\tau >0$, we initialize $\bx^0 = \bw$ and solve the iterations
\begin{align} \label{DRsplitting}
\begin{cases}
\by^n = (\mathbf{\rm Id} - \bB^T (\bB \bB^T)^{-1} \bB) \bx^n, \\
\bz^n = \tau W\left( \frac{\bw}{\tau} \odot e^{(2 \by^n - \bx^n)/\tau} \right), \\
\bx^{n+1} = \bx^n + \bz^n - \by^n,
\end{cases}
\end{align}
where the matrix operator first step is precomputed using the QR decomposition of $\bB$ and the function $W$ in the second step is the Lambert W-function. It is a classical result that, as $n \to +\infty$, we have $\bx^n \to {\rm Proj}^{\textbf{KL}}_{{\rm ker} \bB}(\bw)$ \cite[Corollary 27.7]{bauschke2011convex}.\footnote{Indeed, setting $f = \mathbf{KL}(\cdot|\bw)$  and $g = \iota_{\ker \mathbf{B}}$, the Douglas-Rachford splitting algorithm converges to a minimizer of $f+g$, since both $f$ and $g$ are proper and lower semicontinuous, the sublevel sets of $f+g$ are compact, and $[\mathbf{R} \mathbf{1}_L  \ \mathbf{1}_L]^T \in \mathbb{R}^{m+L}_{>0} \cap \ker \mathbf{B} =  (\text{ri dom } f) \cap (\text{ri dom } g) $, by equation  (\ref{discreteMarkovKernel}). }
In all our simulations, we obtain good results using 25 iterations of (\ref{DRsplitting}) to approximate ${\rm Proj}_{ \ker \bB}^{\mathbf{KL}}$. For this reason, in our analysis of numerical performance below, we primarily focus on the role of the iterations (\ref{Modified Sinkhorn iterations which are equivalent to Bregman}) in OT unfolding.

   \subsection{Choice of  parameters} In the numerical experiments that follow, we make the following choices of parameters:
$\epsilon = 3\times10^{-5}$, 
$\epsilon_\text{init} = 0.01$, and $\tau  = 0.001$. We use 100 iterations of the classical Sinkhorn algorithm to compute our initialization (\ref{initializationofu}), and we use 25 iterations of the DR method (\ref{DRsplitting}) to approximate ${\rm Proj}_{ \ker \bB}^{\mathbf{KL}}$. We choose $\bsigma^0 = \frac{1}{L} \mathbf{1}_{L}$, so that the prior coincides with the reference measure $\hat{\sigma}$; see equation (\ref{discreteprior}).

This choice of parameters reflects the fact that, in practice, we observe good results and faster convergence choosing $\epsilon_\text{init}$ much larger than $\epsilon$. Since the prior $\bsigma^0$ does not contain perfect information, perfectly accurate  initialization of the scaling variables $\bu^0, \bv^0$ is not necessary, and a larger choices of $\epsilon_\text{init}$ allows us to use fewer iterations of the classical Sinkhorn to compute the prior. Interestingly, we also observe that we are more likely to encounter numerical overflows when $\epsilon_\text{init}$ is too small than when $\epsilon$ is too small, likely due to the fact that both marginals in the optimal transport problem (\ref{OTproblemforcomputingprior}) are fixed, while our unfolding problem (\ref{new-entropically-regularized-discrete-denoising-problem}) only fixes one marginal, the measured data.
   
\section{Numerical Results: Comparison of OT and RL}
\label{section numerical results}
We now compare the performance of our optimal transport (OT) based unfolding method to the classical Expectation-Maximization approach, commonly known as Richardson-Lucy (RL). The code for all numerical experiments is available at \url{https://github.com/katycraig/unfolding-Wasserstein-loss}.

\subsection{Example unfolding problems} \label{unfoldingproblemsetupsection}
We consider the performance of OT and RL unfolding  on two examples of unfolding problems: one dimensional problems, based on discretizations of simple   continuum models, and two dimensional problems, based on a physically motivated  jet mass unfolding problem. In both cases, the problems are constructed so that an exact solution is known, providing a baseline for evaluating the relative performance of OT and RL unfolding.
\subsubsection{Setup of one dimensional unfolding problem}
Given a transport map $t: \R \to \R$ and   $\beta >0$, we consider continuum noise models induced by the Markov kernel
\begin{align} \tilde{\rho}_x = \mathcal{N}(t(x),\beta) d \mathcal{L} ,\label{MarkovKernelSimulations}
\end{align}
where the Gaussian function $\mathcal{N}$ is as defined in   equation (\ref{gaussianfunction}).
Our continuum measured  data $\tilde{\nu}$ is obtained by applying this noise model to   $\tilde{\sigma}_{\text{true}} \in \mathcal{P}(\mathbb{R})$, which is a Gaussian mixture of  the form  
\begin{align} \label{sigmatrue}
\tilde{\sigma}_{\text{true}}= \sum_{i=1}^3 w_i \mathcal{N}(c_i, v_i)(x) d \mathcal{L}(x) \text{ for } c_i \in \R, v_i > 0 . 
\end{align}
In other words, $\tilde{\nu} = (\mathcal{N}(0,\beta) d \mathcal{L})* (t \# \sigma_{\text{true}})$. In what follows, all components of these measures that are Gaussian are  sampled via  \texttt{scipy.stats.norm.rvs}.

Based on this underlying continuum problem, we   construct a fully discrete unfolding problem, as in section \ref{Entropic regularization} as follows. We fix a reference measure $\hat{\sigma} = \frac{1}{L} \sum_{k=1}^L \delta_{x_k}$ and sample $\tilde{\sigma}_\text{true}$ to obtain a discrete approximation $\sigma_{\text{true}} := \frac{1}{L'} \sum_{k=1}^{L'} \delta_{x_k'}$. Next, we sample the Markov kernel $\tilde{\rho}_x$ for all $x$ in the support of $\hat{\sigma}$ and $\sigma_{\text{true}}$,   denoting the resulting samples by $\{z_j(x_k)\}_{j=1}^{M}$  and $\{z_j'(x_k')\}_{j=1}^{M'}$. 

We choose the following parameters in our numerical examples:
\begin{table}[h!]
    \centering
    \begin{tabular}{| c | c| c |}
        \hline
       reference $\hat{\sigma}$ (\ref{discreteprior}) & $  \frac{1}{L}\sum_{k=1}^L \delta_{x_k}$&  $x_k$ uniform   from -1 to 1   \\ \hline
       true signal $\tilde{\sigma}_{\text{true}}$ (\ref{sigmatrue}) &   $\sum_{i=1}^3 w_i \mathcal{N}(c_i, v_i)  d \mathcal{L} $ & $c_1 = \frac34, c_2 = -\frac34, c_3 = 0, $\\
       && $ v_i \equiv \frac{1}{20} , w_1 = w_3 = \frac14, w_2 = \frac12$ \\ \hline
Markov kernel  $\tilde{\rho}_x$  (\ref{MarkovKernelSimulations}) & $\mathcal{N}(t ,\beta) d \mathcal{L}$ & $\beta =\frac{1}{100} , t  = {\rm id} + \frac12 {\rm sgn} $ 
\\ \hline
    \end{tabular}
    \caption{Default parameter choices for numerical examples}
\end{table}

\subsubsection{Setup of two dimensional unfolding problem} \label{2dsetup}
Synthetic data for the physically motivated jet mass unfolding problem are generated using Pythia~\cite{Sjostrand:2007gs} to simulate proton-proton collisions and the resulting collision debris.  Detector effects are emulated with Delphes~\cite{deFavereau:2013fsa}.  To generate data for the noise model, Pythia events are written to HEPMC~\cite{Dobbs:2001ck}, so that the same event can be passed through Delphes many times.  For simplicity, we consider two well-studied observables that also have significant detector distortions: the jet mass and groomed jet mass~\cite{Larkoski:2014wba} of the highest transverse momentum jets in each event. The ``true'' collision debris and noisy measurements are then written to separate files for each of the two observables. The joint measurement of these observables is useful to study short and long distance properties of the strong force.

Based on this data, we construct the two dimensional unfolding problem following the same procedure as in the one dimensional case. We begin by  fixing a reference measure $\hat{\sigma} = \frac{1}{L} \sum_{k=1}^L \delta_{x_k}$, where the first column of each file gives the locations of the prior $ {x}_k = (x^1_k, x^2_k) \in \mathbb{R}^2$ for rows $k=1,\dots, L$. In order to construct our measured data $\nu$, we fix a $\sigma_\text{true} =   \sum_{k=1}^{L'} \bsigma_{k}' \delta_{ {x}_k'}$, again using the first column of each file for the locations of $ {x}_k'$, which are chosen from the subsequent $L'$ rows following those used to construct $\hat{\sigma}$. In order to ensure that $\hat{\sigma}$ is not so close to $\sigma_{\text{true}}$ that the unfolding problem is trivial, rather than choosing the weights $\bsigma_k'$ to be identically $\tfrac{1}{L'}$, they are instead chosen  to be proportional to $\mathcal{N}((30,-5), \text{diag}(10,2))(x_k')$ and normalized so that $\sum_{k=1}^{L'} \bsigma_k' = 1$.    
Next, we use the remaining columns of each file to obtain samples of the Markov kernel, with $\{ {z}_j(x_k)\}_{j=1}^M = \{(z^1_j(x_k),z^2_j(x_k))\}_{j=1}^M$ denoting the samples for each $ {x}_k$ in $\hat{\sigma}$ and $\{z_j'(x_k')\}_{j=1}^{M'}$ denoting the samples for each $ {x}_k'$ in $\sigma_{\text{true}}$. We modify the files by replacing values of $ {z}_j(x_k)$ or $ {z}'_j(x_k)$ that are more than five standard deviations away from mean with the average value among all $j$ and $k$ for each of the first and second components.

\subsubsection{Definition of discrete unfolding problem}
With  $\hat{\sigma} = \frac{1}{L} \sum_{k=1}^L \delta_{ {x}_k}$, ground  truth $\sigma_\text{true} = \sum_{k=1}^{L'} \bsigma_{k}' \delta_{ {x}_k'}$, and samples $\{ {z}_j(x_k)\}_{j=1}^M$ and $\{ z'_j(x_k)\}_{j=1}^{M'}$ of the Markov kernel $\rho_x$ for $x$ in the support of $\hat{\sigma}$ and $\sigma_\text{true}$, as described in the previous sections, we are now able to complete our definition of the discrete unfolding problem, as in section \ref{Entropic regularization}.
We   construct the discrete Markov kernel $\rho_{x_k}$ as in equation (\ref{discreteMarkovKernel}), by  taking $m = LM$ and defining the locations and weights by
 \[   \{y_i\}_{i=(k-1)M+1}^{kM} = \{z_j(x_k)\}_{j=1}^{M} , \quad {\mathbf R}_{ik} = \begin{cases} 1/M &\text{ if }i \in [(k-1)M+1, kM], \\ {\mathbf R}_{ik} = 0 &\text{ otherwise, }\end{cases} \]
 for all $k=1,\dots,L$.  
 Finally, we construct the discrete measured data $\nu$, as in equation (\ref{discretemeasureddata}), by first taking $n= L' M'$,  and  constructing the locations  $\{y_i'\}_{i=1}^{n}$ and weights $\mathbf{R}_{ik}'$ as above. We then define $ \nu := \nu_{\sigma_\text{true}} =  \sum_{i,k} \mathbf{R}_{ik}' \bsigma'_k \delta_{y_i'}.$ 

In order to compare the performance of unfolding with a Wasserstein loss to the classical Richardson-Lucy approach, we also consider binned approximations of the above quantities. We first describe our binning of the Markov kernel. Given $n_\text{bin} \in \mathbb{N}$, we approximate the locations $\{y_i\}_{i=1}^m$ by equally spaced locations $\{\bar{y}_j\}_{j=1}^{n_\text{bin}}$ on a grid between the minimum and maximum in each coordinate. Letting $Q(\bar{y}_j)$ denote the cell of the grid partition centered at $\bar{y}_j$, the weights are defined by $\overline{\textbf{R}}_{jk}  = \sum_{ i:   y_i \in Q(\bar{y}_j) } \textbf{R}_{ik}$. Finally, to ensure that this binning approximation satisfies the absolute continuity hypotheses (\ref{KLnecessities}) with reference measure $m = \frac{1}{n_\text{bin}} \sum_{j=1}^{n_\text{bin}} \delta_{\bar{y}_j}$, we perturb the weights by  $0<\epsilon_\text{bin} \ll 1$. This leads to the binned Markov kernel
$ \bar{\rho}_{x_k} := \sum_{j=1}^{n_\text{bin}}((1-\epsilon_\text{bin}) \overline{\textbf{R}}_{jk} + \epsilon_\text{bin} \mathbf{1}/n_\text{bin})\delta_{\bar{y}_j}$.
We bin the measured data in a similar way, approximating $\nu$ by $\sum_{j=1}^{n_\text{bin}} \bar{\boldsymbol{\nu}}_j \delta_{\bar{y}_j}$, where   $\bar{\boldsymbol{\nu}}_j= \sum_{i: y_i \in Q(\bar{y_j})} \bnu_i$. As with the Markov kernel,  to enforce hypotheses (\ref{KLnecessities}), we perturb the weights by $0<\epsilon_\text{bin} \ll 1$. This leads to the binned measured data $\bar{\nu} := \sum_{j=1}^{n_{\text{bin}}}  ((1-\epsilon_\text{bin}) \bar{\bnu} + \epsilon_\text{bin} \mathbf{1}/n_\text{bin}) \delta_{\bar{y}_j}$. In all of our simulations, we choose $\epsilon_{\text{bin}} = 10^{-40}$.

In the simulations that follow, we evaluate the relative performance of OT and RL unfolding by computing the value of $W_2^2(\nu_\sigma,\nu)$ along iterations. At each iteration of both algorithms, we take the current approximations of the true data, $\sigma_{OT}$ and $\sigma_{RL}$, and apply the \emph{unbinned} discrete noise model to each as in equation (\ref{sigmanusigmaequation}), in order to obtain their images under the noise model, $\nu_{\sigma_{OT}}$ and $\nu_{\sigma_{RL}}$. We then measure the accuracy of the OT and RL algorithms by comparing these images to the true measured data $\nu$, in terms of the square Wasserstein metric. 

Given the fundamental role of the KL divergence in the RL algorithm, the KL divergence between $\nu$ and $\nu_{\sigma_{OT}}$, $\nu_{\sigma_{RL}}$ might also be an interesting point of comparison. However, since we typically do not have   $\nu \ll \nu_{\sigma_{OT}}$, this leads to ${\rm KL}(\nu|\nu_{\sigma_{OT}}) = +\infty$. Analysis of  OT and RL performance under other metrics or statistical divergences is an interesting question that we leave to future work.

\begin{figure}[htbp]
\begin{center}
\includegraphics[scale=0.55, trim={.4cm .55cm .45cm .45cm},clip]{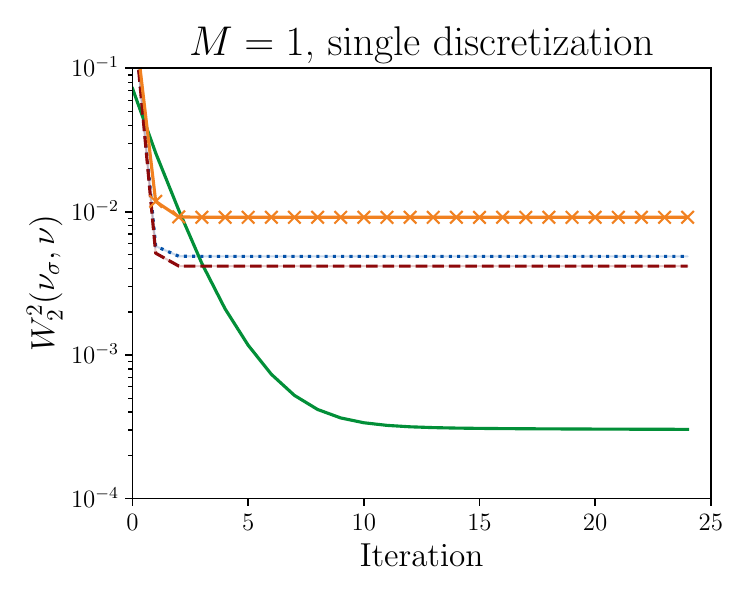}
\includegraphics[scale=0.55, trim={2cm .55cm .45cm .45cm},clip]{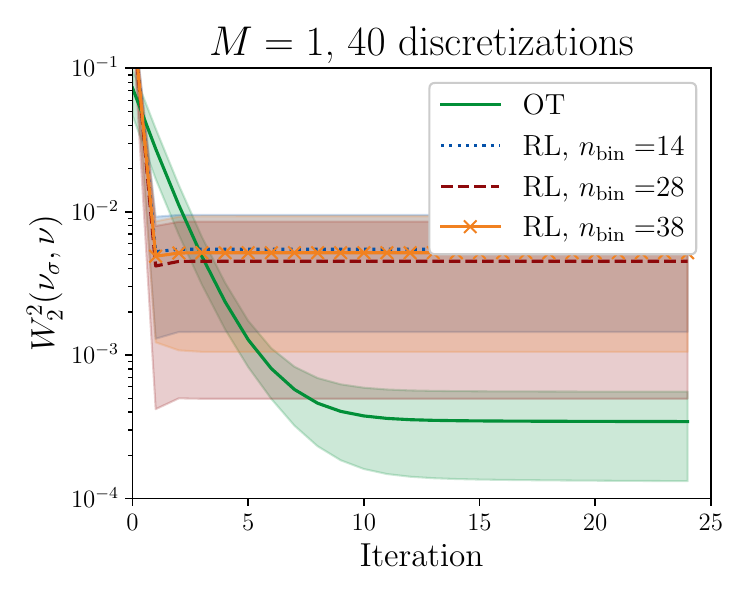}  
\caption{
Behavior of $W_2(\nu_\sigma, \nu)$ along iterations for OT and RL unfolding methods. Left: a single discretization of the continuum one dimensional  unfolding problem. Right: mean and standard deviation of behavior across 40 independent discretizations of the same continuum  unfolding problem. While OT requires more iterations to converge, it achieves higher accuracy.}
\end{center} \label{M1manyseeds}
\end{figure}

\subsection{One dimension}
 We begin by analyzing the role of the Markov kernel $\rho_x$ and  binning parameter $n_\text{bin}$ on the one dimensional unfolding problem described in section \ref{unfoldingproblemsetupsection} with $L= L'=150$ and $M'=3$. As  the performance of the OT method is fairly stable for different choices of Sinkhorn regularization $\epsilon$,  we choose $\epsilon = 3 \times 10^{-5}$ in all simulations. At the end of this section, we demonstrate the effect of different choices of $\epsilon$.

 Figure \ref{M1manyseeds} compares the behavior of OT to RL in terms of the value of $W_2^2(\nu_\sigma,\nu)$ along iterations of the method for $M=1$. 
 The left panel shows the results of a single  discretization obtained by sampling the  continuum problem. In this case, we observe that RL  converges more quickly for all choices of $n_\text{bin}$, while OT achieves the best accuracy in terms of $W_2^2(\nu_\sigma,\nu)$. The performance of RL is non-monotonic in the number of bins, with $n_\text{bin} =28$ achieving the  best accuracy. The right panel illustrates the average behavior of RL and OT across 40 independent discretizations of the continuum problem, with mean performance shown by the solid line and variance shown by the shaded region. While the large width of some shaded regions demonstrates that performance of unfolding methods does change for different discretizations of the same underlying continuum problem, we still observe the same qualitative trends as in the case of a single discretization.
 
\begin{figure}[htbp]
\begin{center}
Measured data $\nu$ vs. unfolding approximations $\nu_{OT}$ and $\nu_{RL}$
\includegraphics[scale = 0.53,trim={.4cm 6.8cm .4cm 7.4cm},clip]{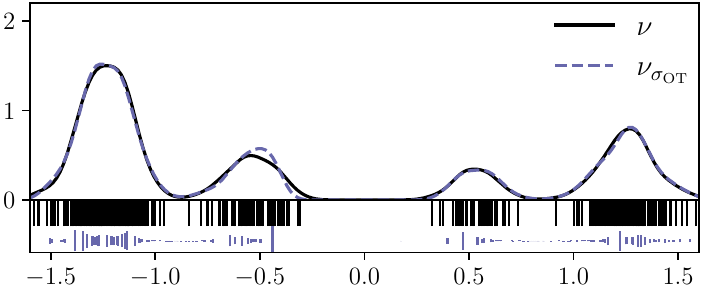}
\includegraphics[scale=0.53,trim={.8cm 6.8cm 0cm 7.4cm},clip]{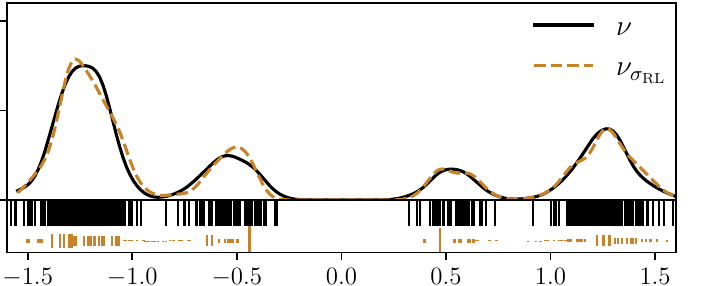}
\caption{Comparison of  measured data $\nu$   to $\nu_\sigma$, where $\sigma$ is from the final iteration of OT unfolding (left panel) vs. the final iteration of RL unfolding with 28 bins (right panel). In agreement with  Figure \ref{M1manyseeds}, we observe that,   OT unfolding achieves better accuracy of the approximation $\nu \approx \nu_\sigma$ compared to RL unfolding on this task.}
\end{center} \label{1dmeasureddata}
\end{figure}

For the same unfolding problem,  Figure \ref{1dmeasureddata} provides a visual comparison of between the measured data $\nu$ and $\nu_\sigma$, where $\sigma$ is from the final iteration of OT unfolding (left panel) vs. the final iteration of RL unfolding with 28 bins (right panel). As all quantities in the numerical scheme are discrete (see section \ref{Entropic regularization}), the black tick marks along the bottom of the figure represent the locations of the true measured data $\{ y_j '\}_{j=1}^n$ and their weights $\{\bnu_j\}_{j=1}^n$. The purple and orange tick marks represent the locations of $\{y_i\}_{i=1}^m$ and their weights $\{\bR\bsigma)_i\}_{i=1}^m$. For ease of visual comparison, the top portion of each panel plots Gaussian kernel density estimations of each empirical measure with bandwidth $0.05$. In agreement with  Figure \ref{M1manyseeds}, we observe that, in this example, OT unfolding achieves better accuracy of the approximation $\nu \approx \nu_\sigma$ compared to RL unfolding.

\begin{figure}[htbp]
\begin{center}
\includegraphics[scale=0.41,trim={.6cm .7cm .8cm 0cm},clip]{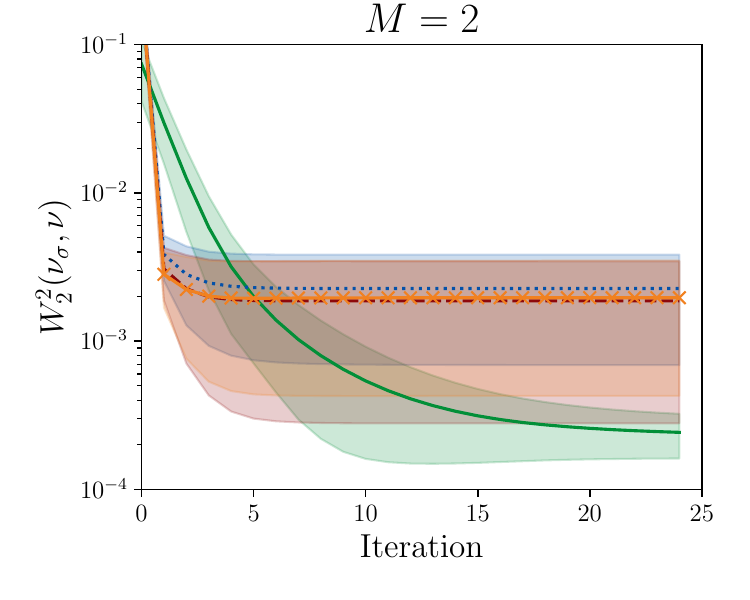}
\includegraphics[scale=0.41,trim={2.2cm .7cm .8cm 0cm},clip]{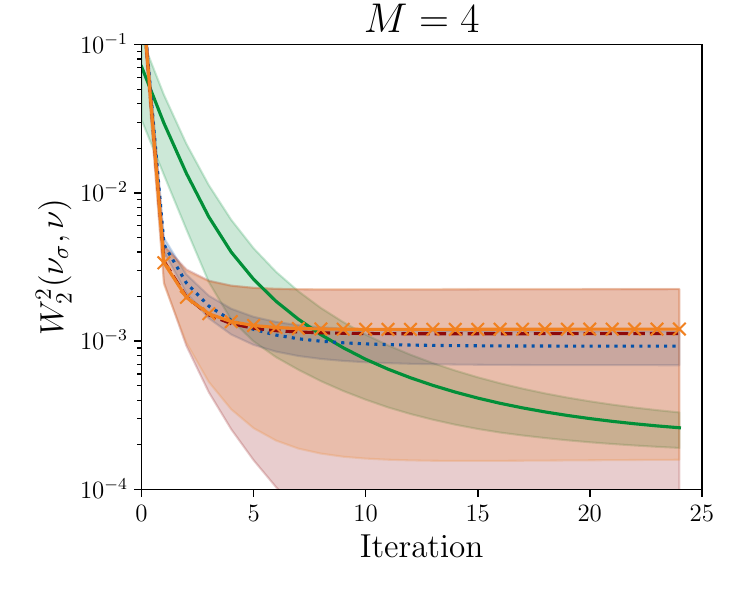}
\includegraphics[scale=0.41,trim={2.2cm .7cm .8cm 0cm},clip]{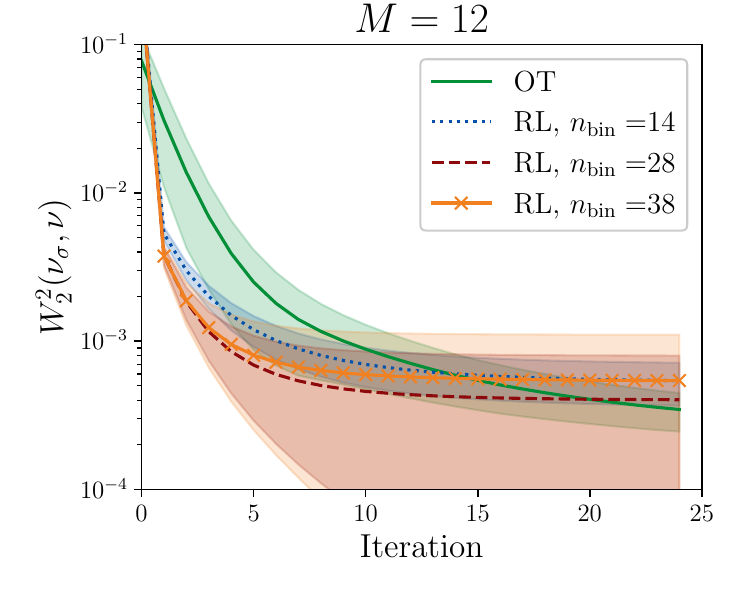}
 \caption{Behavior of $W_2(\nu_\sigma, \nu)$ along iterations for OT and RL unfolding methods, averaged across 40 independent discretizations of the same one dimensional continuum  unfolding problem. While OT requires more iterations to converge, it achieves higher accuracy. As $M$ increases, the accuracy of RL approaches that of OT, and all methods require more iterations to converge.}
\label{1dDifferentMs}
\end{center}
\end{figure}
  
  Figure \ref{1dDifferentMs} considers the decay of $W_2(\nu_\sigma, \nu)$ along iterations of OT and RL unfolding for finer discretizations of the noise model $M = 2$, 4, and 12. As in  Figure \ref{M1manyseeds}, the value of $W_2(\nu_\sigma,\nu)$ is averaged over 40 independent discretizations of the continuum  problem, which mean behavior show in solid lines and standard deviation shown in the shaded regions. As in Figure \ref{M1manyseeds},  OT requires more iterations to converge, but ultimately achieves the best accuracy. However, the benefit of OT in terms of accuracy is largest when  $M$  is small, so that the discretization of the Markov kernel is coarse. As $M$ increases, the accuracy of RL approaches that of OT, while the accuracy of OT remains roughly the same. As $M$ increases, all methods require more iterations to converge. 
  
 In practice, we also observed similar phenomena when $M$ is fixed and we considered different values of $M'$: OT outperforms for small values and is more similar to RL for large values. We believe these trends are due to the fact that smaller values of $M$ and $M'$ lead to sparser discretizations of $\nu$ and $\nu_\sigma$, which have less meaningful overlap, thus requiring coarser binning for RL to compare their relative values. Such coarse binning naturally introduces discretization errors. On the other hand, the OT method does not require absolute continuity/overlap of the measures, and its performance appears more stable to different choices of $M$ and $M'$.

\begin{figure}[htbp]
\begin{center}
\includegraphics[scale=0.40,trim={.4cm .5cm .6cm .4cm},clip]{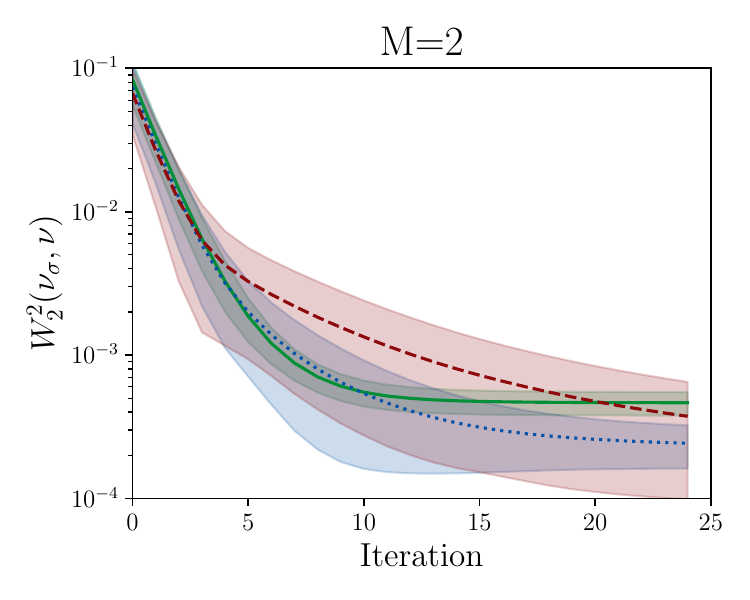}
\includegraphics[scale=0.40,trim={2.2cm .5cm .6cm .4cm},clip]{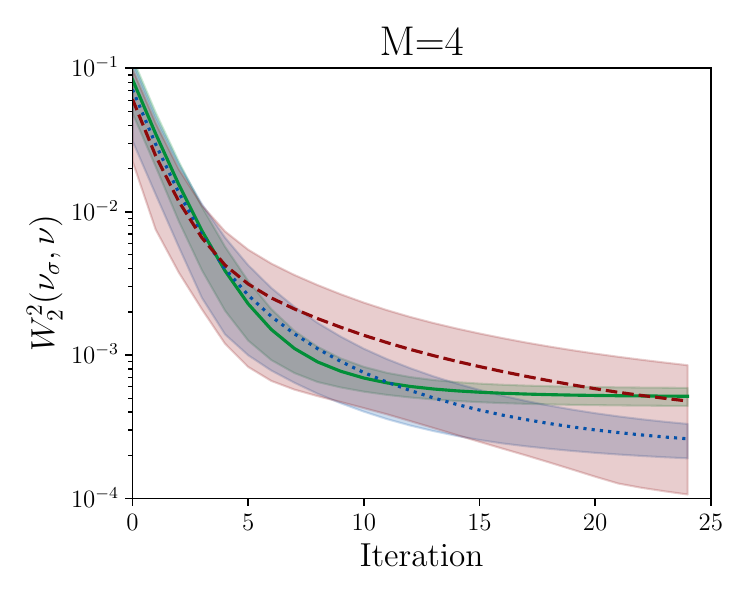}
\includegraphics[scale=0.40,trim={2.2cm .5cm .6cm .4cm},clip]{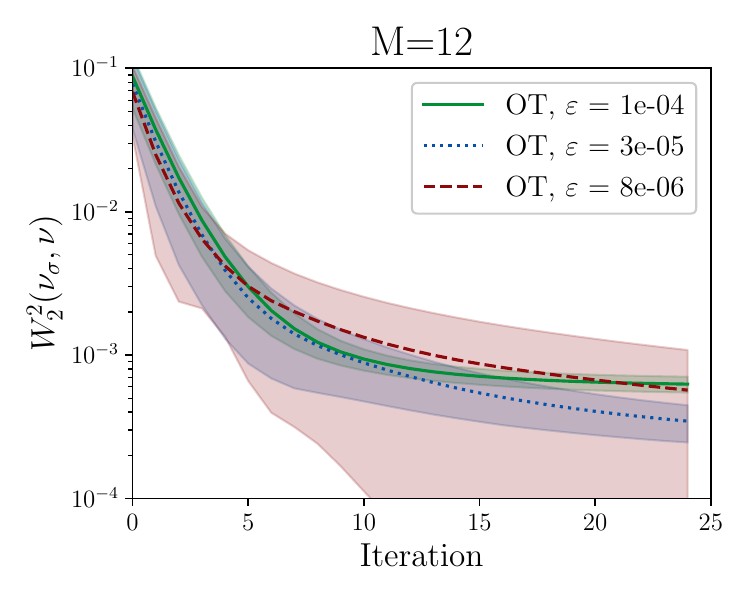}
 \caption{Behavior of $W_2(\nu_\sigma, \nu)$ along iterations of the OT  unfolding method for different choices of the Sinkhorn regularization $\epsilon$, averaged across 40 independent discretizations of the same one dimensional continuum  unfolding problem. We observe best accuracy for $\epsilon = 3 \times 10^{-5}$.}
\label{1dDifferenteps}
\end{center}
\end{figure}

  We conclude by illustrating the effect of different choices of the Sinkhorn regularization parameter $\epsilon$. In Figure \ref{1dDifferenteps} we compare the behavior of $W_2(\nu_\sigma, \nu)$ along iterations for the OT unfolding method for three choices of $\epsilon$, averaging across 40 independent discretizations of the same continuum unfolding problem. In all three cases, we observe that $\epsilon = 3 \times 10^{-5}$ achieves the best accuracy, motivating our choice of this value of $\epsilon$ throughout this manuscript.

  \subsection{Two dimensions} We now consider the behavior of OT and RL   on the two dimensional jet mass unfolding problem described in section \ref{2dsetup}, with $L=L' = 100$ and $M' = 3$.

\begin{figure}[htbp]
\begin{center}
\includegraphics[scale=0.51,trim={.4cm .5cm .55cm .4cm},clip]{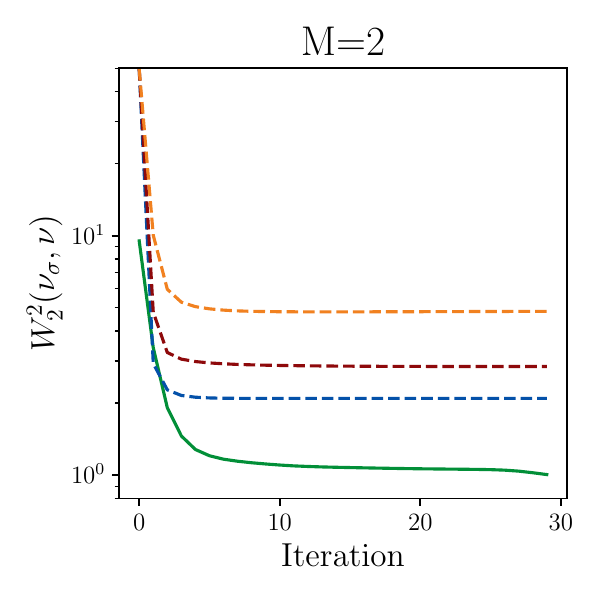}
\includegraphics[scale=0.51,trim={1.8cm .5cm .55cm .4cm},clip]{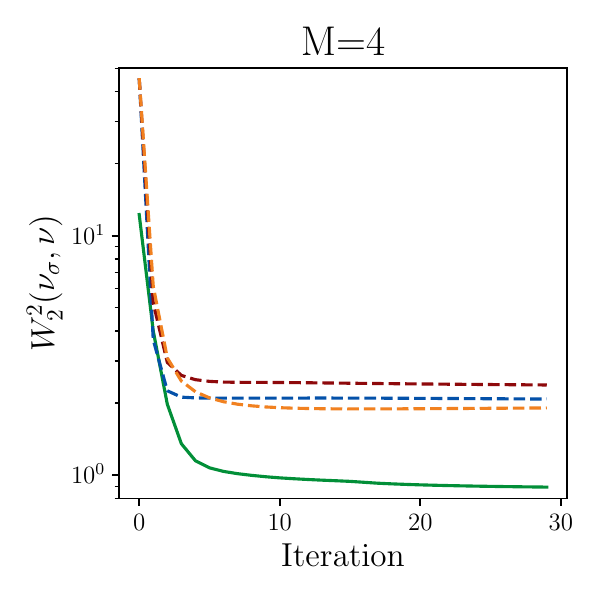}
\includegraphics[scale=0.51,trim={1.8cm .5cm .55cm .4cm},clip]{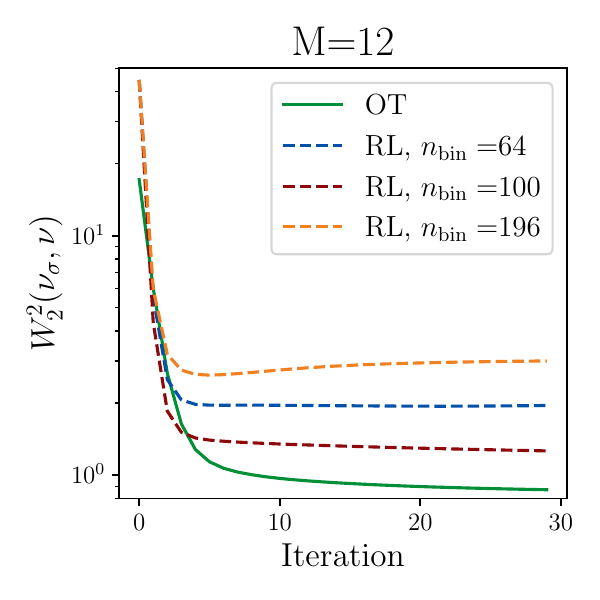}
 \caption{Behavior of $W_2(\nu_\sigma, \nu)$ along iterations for OT and RL unfolding methods on a two dimensional jet mass unfolding problem. Similar trends emerge as in the one dimensional case: OT requires more iterations to converge, but achieves higher accuracy, particularly when $M$ is small.}
\label{2dDifferentMs}
\end{center}
\end{figure}

 Figure \ref{2dDifferentMs} compares the behavior of OT and RL in terms of the value of $W_2^2(\nu_\sigma,\nu)$ along iterations, for varying discretizations of the noise model: $M=2, 4$, and 12. As in the one dimensional case, 
 we observe that RL  converges more quickly for all choices of $n_\text{bin}$, while OT achieves comparable or better accuracy in terms of $W_2^2(\nu_\sigma,\nu)$. The benefit of OT in terms of accuracy is largest when $M$ is small, so that the discretization of the Markov kernel is coarse. As $M$ increases, the accuracy of RL approaches that of OT, while the accuracy of OT remains roughly the same.  The performance of RL is non-monotonic in the number of bins, with $n_\text{bin} =100$ achieving the  best  overall accuracy across all three simulations.

\begin{figure}[htbp]
\includegraphics[valign=t,scale=0.61,trim={.5cm 5.1cm .45cm .4cm},clip]{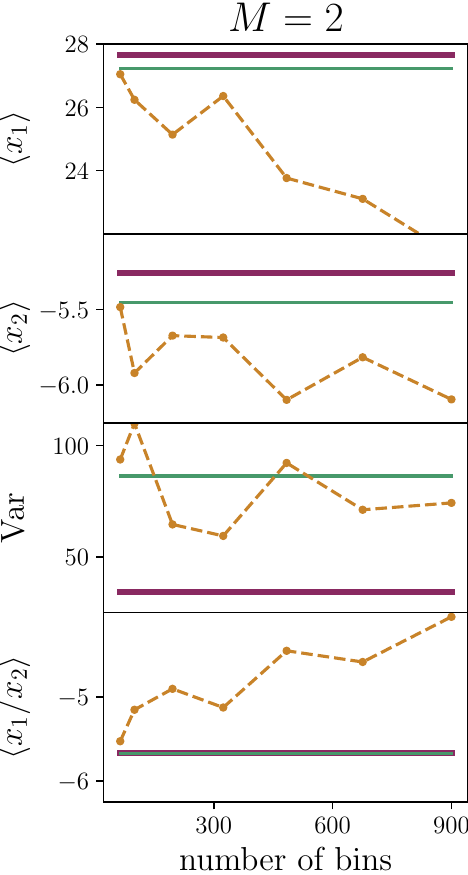}
\includegraphics[valign=t,scale=0.61,trim={2.25cm 5.1cm .45cm .4cm},clip]{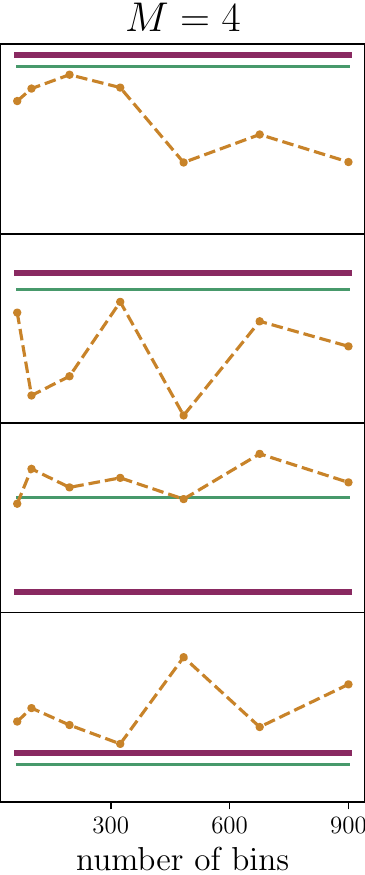}
\includegraphics[valign=t,scale=0.61,trim={2.25cm 5.1cm .45cm .4cm},clip]{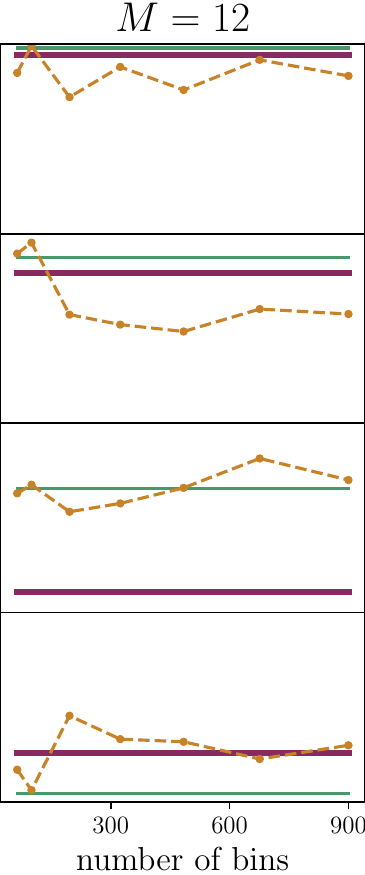} \\
\includegraphics[valign=t,scale=0.15,trim={-6.2cm 0cm 0cm -1cm},clip]{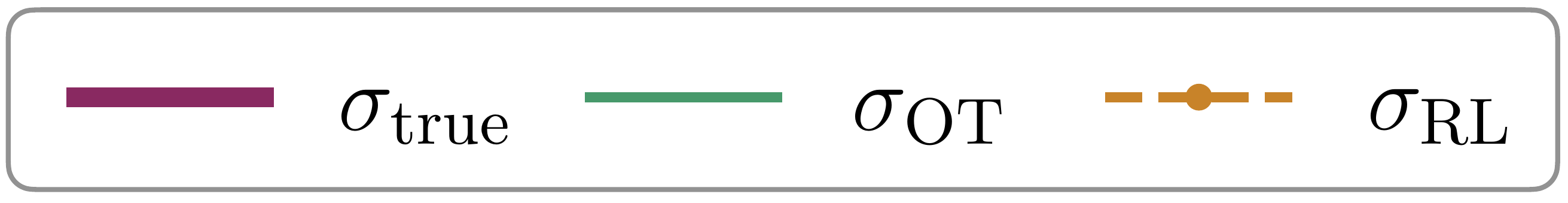} 
 \caption{Behavior of different observables along iterations for OT and RL unfolding methods on a two dimensional jet mass unfolding  problem. As in the one dimensional case, the accuracy of RL is strongly dependent on the choice of bins. In general, the OT method achieves higher overall accuracy, particularly when $M$ is small.}
\label{2dDifferentMs_observables}
\end{figure}

Figure \ref{2dDifferentMs_observables} compares the behavior of OT and RL in terms of four different observables, where for any probability measure $\mu$ and function $f(x_1, x_2)$  we denote $\langle f(x_1, x_2)  \rangle_{\mu} := \int f(x_1, x_2) d \mu(x_1,x_2)$ and    let ${\rm Var} := \langle \|(x_1,x_2)\|^2 \rangle_\mu - \| \langle (x_1,x_2) \rangle_\mu \|^2$. In the motivating jet mass problem, the moments of the true distribution $\sigma_{\text{true}}$ are more precisely known than the full distribution, and the ratio $\la x_1/x_2 \ra_{\sigma_{\text{true}}}$  is particularly sensitive to the details of hadronization --- the process by which quarks and gluons form bound states like the proton and neutron.

The solid red line shows the true value of observables for the measure ${\sigma_{\text{true}}}$.  The  solid green line shows the values for ${\sigma_{\text{OT}}}$, which are independent of choice of binning. The  dotted yellow line shows the values for ${\sigma_{\text{RL}}}$, which depends strongly on the number of bins. In agreement with the earlier simulations, which measured error in terms of $W_2(\nu_\sigma,\nu)$, we observe that RL remains sensitive to the choice of binning, and OT achieves higher overall accuracy. As before, the accuracy benefits of OT are highest for smaller values of $M$. 

\section*{Acknowledgements} The work of K. Craig and B. Faktor has  been supported in part by NSF DMS grant 2145900 and a UCSB STEM Seed Grant.  B. Nachman is supported by the DOE under contract DE-AC02-76SF00515. The authors gratefully acknowledge the support from the Simons Center for Theory of Computing, at which part of this work was completed.

The authors would like to thank L\'ena\"ic Chizat, Nathaniel Craig, Nicol\'as Garc\'ia Trillos, and Yunan Yang for helpful discussions that improved this work.
  
\bibliographystyle{siamplain}
\bibliography{references}

\appendix

\section{Supplementary Material} 
\subsection{From Bregman Projections to Sinkhorn Iterations} 
We begin by showing the equivalence of the Bregman projections \eqref{Bregman iterations} and Sinkhorn iterations \eqref{Modified Sinkhorn iterations which are equivalent to Bregman}. Namely, we will prove the following theorem.

\begin{theorem}
\label{Bregman-Sinkhorn equivalence}
Fix ${\bu}^0 \in \mathbb{R}^{m+L}_+$, ${\bv}^0 \in \mathbb{R}^{n+1}_+$ and set ${\bP}^0 = {\rm diag}({\bu}^0) \tilde{\bK} {\rm diag}({\bv}^0)$. Then, for all $l \geq 0$,  the iterations (\ref{Bregman iterations})-(\ref{Modified Sinkhorn iterations which are equivalent to Bregman}) satisfy 
\begin{align*}
\bP^{2l+1} =  {\rm diag}(\bu^{l+1}) \tilde{\bK} {\rm diag}(\bv^l), \quad \bP^{2l+2} =  {\rm diag}(\bu^{l+1}) \tilde{\bK}  {\rm diag}(\bv^{l+1}).
\end{align*}
\end{theorem}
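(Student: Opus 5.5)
The argument is an induction on $l$. It rests on two closed-form descriptions of KL projections onto affine sets that constrain only one marginal. Throughout, $\tilde{\bK}$ has entries in $[0,\infty)$ (zeros where $\tilde{\bC}=+\infty$), and all matrices are supported where $\tilde{\bK}>0$. Positivity of the denominators is guaranteed by Remark \ref{positivityscaling}; a general positive initialization needs the same observation.

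First consider projection onto a set of the form $\{\bP: \bP\mathbf{1} \in S\}$, where $S$ is a closed convex set (here $S=\ker\bB$, intersected with the nonnegative orthant as the domain of $\mathbf{KL}$ forces). Fix $\bQ$ with row sums $\bq=\bQ\mathbf{1}_{n+1}$. Any $\bP$ can be split into its row sums $\bp=\bP\mathbf{1}$ and its conditional rows $\bP_{ij}/\bp_i$. The chain rule for the (unnormalized) KL divergence then gives
\[
\mathbf{KL}(\bP|\bQ) = \mathbf{KL}(\bp|\bq) + \sum_i \bp_i\, \mathrm{KL}\bigl(\bP_{i\cdot}/\bp_i \,\big|\, \bQ_{i\cdot}/\bq_i\bigr).
\]
The second term is nonnegative and vanishes exactly when each row of $\bP$ is proportional to the corresponding row of $\bQ$. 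Hence
\[
{\rm Proj}^{\mathbf{KL}}_{\mathcal{A}}(\bQ)={\rm diag}\bigl(\bp^*/\bq\bigr)\,\bQ, \qquad \bp^*={\rm Proj}^{\mathbf{KL}}_{\ker\bB}(\bq).
\]
Rows with $\bq_i=0$ must have $\bp^*_i=0$, since otherwise the KL is infinite, so they are harmless. One can check the identity directly by verifying that the row rescaling is feasible and that the divergence splits as above.

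Applying this with $\bQ=\bP^{2l}={\rm diag}(\bu^l)\tilde{\bK}{\rm diag}(\bv^l)$ gives $\bq=\bu^l\odot\tilde{\bK}\bv^l$. Therefore
\[
\bP^{2l+1}={\rm diag}\Bigl(\tfrac{{\rm Proj}^{\mathbf{KL}}_{\ker\bB}(\bu^l\odot\tilde{\bK}\bv^l)}{\bu^l\odot\tilde{\bK}\bv^l}\Bigr){\rm diag}(\bu^l)\tilde{\bK}{\rm diag}(\bv^l)={\rm diag}(\bu^{l+1})\tilde{\bK}{\rm diag}(\bv^l),
\]
which is the first claimed identity, with $\bu^{l+1}$ as in \eqref{Modified Sinkhorn iterations which are equivalent to Bregman}.

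For the $\mathcal{B}$ step, the column marginal is fixed to $\bb$. The same chain-rule argument applied to columns shows that the projection rescales each column $j$ by $\bb_j/(\bQ^T\mathbf{1})_j$. With $\bQ=\bP^{2l+1}$ we have $\bQ^T\mathbf{1}=\bv^l\odot\tilde{\bK}^T\bu^{l+1}$, so
\[
\bP^{2l+2}={\rm diag}(\bu^{l+1})\tilde{\bK}{\rm diag}(\bv^{l+1}), \qquad \bv^{l+1}=\bb/(\tilde{\bK}^T\bu^{l+1}),
\]
which is the second identity.

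The base case is the hypothesis $\bP^0={\rm diag}(\bu^0)\tilde{\bK}{\rm diag}(\bv^0)$, and the two steps close the induction. The indexing needs one check: the theorem's $\bv^0$ is arbitrary, whereas the scheme defines $\bv^l$ from $\bu^l$. Only $\bv^{l+1}=\bb/\tilde{\bK}^T\bu^{l+1}$ is used for $l\ge0$, so this is consistent.

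The main obstacle is the $\mathcal{A}$-projection, because its constraint set is not a fixed marginal. The key point is to justify that the optimal row sums are exactly ${\rm Proj}^{\mathbf{KL}}_{\ker\bB}(\bq)$ and that the minimizer exists and is unique; both follow from the decomposition above together with strict convexity and the fact that $\ker\bB$ meets the positive orthant, as in the footnote on Douglas--Rachford. Care is also needed with the zero entries of $\tilde{\bK}$, which are handled by restricting to the support and using the convention $0\log 0=0$.
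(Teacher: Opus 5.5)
Your proof is correct, and its skeleton matches the paper's. Both argue by induction on $l$, reduce the $\mathcal{A}$-step to the identity $\mathrm{Proj}^{\mathbf{KL}}_{\mathcal{A}}(\mathbf{Q})=\mathrm{diag}\bigl(\mathrm{Proj}^{\mathbf{KL}}_{\ker\bB}(\mathbf{Q}\mathbf{1}_{n+1})/\mathbf{Q}\mathbf{1}_{n+1}\bigr)\mathbf{Q}$, and reduce the $\mathcal{B}$-step to column rescaling. The difference is in how you justify these two projection formulas.

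\emph{The paper's route.} Proposition \ref{Prop Form of projection onto matrices with first marginal in ker B part 2} introduces a multiplier $\mathbf{f}$ and reads off from stationarity that the optimizer has the form $\mathrm{diag}(e^{\bB^T\mathbf{f}_*})\mathbf{Q}$. It then minimizes over this ansatz family to identify the row sums. The $\mathcal{B}$-projection is simply quoted from \cite[Proposition 1]{Be15}.

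\emph{Your route.} You use the chain rule $\mathbf{KL}(\bP|\mathbf{Q})=\mathbf{KL}(\bP\mathbf{1}|\mathbf{Q}\mathbf{1})+\sum_i\bp_i\,\mathrm{KL}(\cdot|\cdot)$. This does hold for the unnormalized divergence, because the linear terms $-\bP_{ij}+\mathbf{Q}_{ij}$ aggregate exactly into those of the row sums. It gives the global bound $\mathbf{KL}(\bP|\mathbf{Q})\ge\mathbf{KL}(\bp^*|\mathbf{q})$ on all of $\mathcal{A}$, with equality exactly at the row rescaling. The same identity applied to columns handles $\mathcal{B}$.

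\emph{What each buys.} Your route gives optimality and uniqueness in one stroke. It does not need the minimizer to be a KKT point in the relative interior of the domain, which the paper states without detail and which needs some care because $\tilde{\bK}$ has zero entries. Nor does it need the observation that restricting to the ansatz loses nothing. It is also self-contained, with no citation for the $\mathcal{B}$-step. The Lagrangian route is the standard template in entropic transport and exposes the dual potential $\mathbf{f}$. It parallels the proof of Proposition \ref{Peyre-Cuturi 4.3 analog}, and it extends to linear constraints that do not act only through a marginal, where the chain rule no longer decouples the problem.

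\emph{A point left open on both sides.} Neither you nor the paper proves that $\mathrm{Proj}^{\mathbf{KL}}_{\ker\bB}(\mathbf{q})$ is strictly positive, which is what keeps every denominator nonzero for an arbitrary positive initialization. It follows because $[\bR\mathbf{1}_L;\mathbf{1}_L]\in\ker\bB$ is strictly positive and $t\mapsto t\log t$ has slope $-\infty$ at $0$. Hence moving from a minimizer with a zero coordinate toward this vector would strictly decrease the divergence.
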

This theorem has a simple interpretation: updating $\bu$   to ensure that  $ {\rm diag}(\bu) \tilde{\bK} {\rm diag}(\bv)$ has the correct first marginal is equivalent to updating   ${\rm diag}(\bu) \tilde{\bK} {\rm diag}(\bv)$  to ensure that it itself has the correct first marginal (when both updates are done in the least KL-costly manner). 
A parallel interpretation holds regarding second marginals. 

Before we prove Theorem \ref{Bregman-Sinkhorn equivalence}, we begin with the following proposition on the form of \textbf{KL}-projections onto the subspace having first marginal in ${\rm \ker} \, \bB$.

\begin{proposition}
\label{Prop Form of projection onto matrices with first marginal in ker B part 2}
For every $\bP \in \mathbb{R}_+^{(m+L) \times (n+1)}$, we have
\begin{align} \label{projectionproposition}
{\rm Proj}_{\mathcal{A}}^{\mathbf{KL}}(\bP) = {\rm diag} \left(  \frac{ { \rm Proj}_{{\ker \bB}}^{\mathbf{KL}}(\bP \mathbf{1}_{n+1})}{\bP \mathbf{1}_{n+1}} \right) \bP .
\end{align}
\end{proposition}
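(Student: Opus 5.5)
The plan is to split the KL projection onto $\mathcal{A}$ into two nested minimizations: first fix the row sums, then optimize within each row. Write $\mathbf{r} := \bP \mathbf{1}_{n+1}$ for the row sums of $\bP$. Every $\bP' \in \mathcal{A}$ has row sums $\mathbf{s} := \bP'\mathbf{1}_{n+1} \in \ker \bB$. Since $\mathbf{KL}$ is a sum over entries, we decompose
\begin{align*}
\min_{\bP' \in \mathcal{A}} \mathbf{KL}(\bP'|\bP) = \min_{\mathbf{s} \in \ker\bB,\ \mathbf{s}\geq 0} \ \sum_{i=1}^{m+L} \ \min_{\bP'_{i\cdot} \geq 0,\ \sum_j \bP'_{ij} = \mathbf{s}_i} \mathbf{KL}(\bP'_{i\cdot} | \bP_{i\cdot}).
\end{align*}
Entries with $\bP_{ij}=0$ must have $\bP'_{ij}=0$ for finite KL, so they can be dropped. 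The same argument shows that if $\mathbf{r}_i = 0$, then $\mathbf{s}_i$ must be $0$. This is consistent with the convention $0/0$ in \eqref{projectionproposition}; I will interpret that ratio as $0$, or note that the iterations keep all $\mathbf{r}_i>0$ by Remark \ref{positivityscaling}.

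\emph{Step 1 (inner problem).} Fix a row $i$ with $\mathbf{r}_i>0$ and a target sum $\mathbf{s}_i$. I will minimize $\sum_j \bP'_{ij}\log(\bP'_{ij}/\bP_{ij}) - \bP'_{ij} + \bP_{ij}$ subject to $\sum_j \bP'_{ij} = \mathbf{s}_i$. This is strictly convex. A Lagrange multiplier (or the log-sum inequality) gives $\log(\bP'_{ij}/\bP_{ij}) = \lambda_i$, so $\bP'_{ij} = (\mathbf{s}_i/\mathbf{r}_i)\bP_{ij}$. The optimal value is then
\[
\mathbf{s}_i\log(\mathbf{s}_i/\mathbf{r}_i) - \mathbf{s}_i + \mathbf{r}_i .
\]
Alternatively, the log-sum inequality directly gives
\[
\sum_j \bP'_{ij}\log\frac{\bP'_{ij}}{\bP_{ij}} \geq \mathbf{s}_i \log\frac{\mathbf{s}_i}{\mathbf{r}_i},
\]
with equality iff $\bP'_{i\cdot}$ is proportional to $\bP_{i\cdot}$. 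The linear terms sum to $-\mathbf{s}_i + \mathbf{r}_i$ in either case.

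\emph{Step 2 (outer problem).} Summing the row values from Step 1, the outer objective is exactly $\mathbf{KL}(\mathbf{s}|\mathbf{r})$. Minimizing it over $\mathbf{s}\in\ker\bB$ gives $\mathbf{s}^* = {\rm Proj}^{\mathbf{KL}}_{\ker\bB}(\mathbf{r})$. Existence and uniqueness follow from strict convexity and coercivity of $\mathbf{KL}(\cdot|\mathbf{r})$ on the closed convex set $\ker\bB \cap \mathbb{R}_+^{m+L}$. Nonemptiness of this set with finite value comes from the positive element $[\bR\mathbf{1}_L;\mathbf{1}_L]$ noted in the footnote to \eqref{DRsplitting}, assuming $\mathbf{r}>0$. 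Combining the two steps, the minimizer is $\bP'_{ij} = (\mathbf{s}^*_i/\mathbf{r}_i)\bP_{ij}$, i.e.\ ${\rm diag}(\mathbf{s}^*/\mathbf{r})\bP$, which is \eqref{projectionproposition}.

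\emph{Main obstacle.} The decomposition into nested minimizations is the only subtle point. I need every $\bP'\in\mathcal{A}$ to be captured by some admissible $\mathbf{s}$ (immediate), and the joint minimizer to be unique, so that the formula identifies the projection rather than merely a minimizer. Uniqueness follows from the strict convexity of $\mathbf{KL}(\cdot|\bP)$ on its domain. The remaining bookkeeping is to handle zero entries of $\bP$ or $\mathbf{r}$ (for instance from the $+\infty$ blocks of $\tilde{\bC}$, which make $\tilde{\bK}$ vanish there) with the conventions $0\log 0=0$ and $0/0=0$.
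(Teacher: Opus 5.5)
Your proof is correct. It reaches the same reduced problem as the paper, namely minimizing $\mathbf{KL}(\mathbf{s}|\mathbf{r})$ over row-sum vectors $\mathbf{s}\in\ker\bB$, but by a different mechanism.

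The paper attaches one multiplier $\mathbf{f}$ to the constraint $\bB\bP'\mathbf{1}_{n+1}=0$. From stationarity of the Lagrangian it reads off that the optimizer has the form ${\rm diag}(\mathbf{g})\bP$ with $\mathbf{g}=e^{\bB^T\mathbf{f}}$. It then restricts the minimization to this family and substitutes $\mathbf{g}=\bw/\bP\mathbf{1}_{n+1}$. That is a necessary-condition argument. It takes for granted that a minimizer exists and is a genuine stationary point, strictly positive wherever $\bP$ is, so that the logarithm can be differentiated. It also does not address uniqueness or the zero entries that $\tilde{\bK}$ actually produces.

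Your nested minimization with the log-sum inequality is a direct sufficiency argument instead. For every feasible $\bP'$ it gives $\mathbf{KL}(\bP'|\bP)\ge\mathbf{KL}(\bP'\mathbf{1}_{n+1}|\mathbf{r})\ge\mathbf{KL}(\mathbf{s}^*|\mathbf{r})$. The equality cases then single out ${\rm diag}(\mathbf{s}^*/\mathbf{r})\bP$. So existence, uniqueness and the degenerate entries (zero entries of $\bP$, zero rows) are handled at once, without knowing in advance that the projection is strictly positive.

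What the paper's route buys in exchange is brevity and the explicit exponential form of the scaling factor. That form parallels the multiplier computation in the proof of Proposition~\ref{Peyre-Cuturi 4.3 analog}.

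One simplification for you: finiteness of the outer problem needs neither $\mathbf{r}>0$ nor the positive element $[\bR\mathbf{1}_L;\mathbf{1}_L]$. The point $\mathbf{s}=0\in\ker\bB$ already gives $\mathbf{KL}(0|\mathbf{r})=\sum_i\mathbf{r}_i<\infty$.
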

\begin{proof}
 
We associate to the  minimzation
$${\rm Proj}_{\mathcal{A} }^{\mathbf{KL}}(\bP)  =  \argmin_{\substack{\bP' \\ \bP' \mathbf{1}_{n+1} \in \ker(\bB) }}  \textbf{KL}(\bP' | \bP)$$
the Lagrangian variable $\mathbf{f}$ and Lagrangian
$$
\mathcal{L}(\bP', \bf) = \textbf{KL}(\bP'|\bP) - \bf^T\bB\bP'\mathbf{1}_{n+1}.
$$
Analysis of the stationary points gives, at optimum $\bP'_*$, that $\bP'_* = {\rm diag} (\bg_*) \bP$ where $\bg_* = e^{\bB^T \bf_*}$. It remains to show that 
$\bg_*=    { \rm Proj}_{{\ker \bB}}^{\mathbf{KL}}(\bP \mathbf{1}_{n+1}) / \bP \mathbf{1}_{n+1}$, or equivalently, that $\bw_*:= \bg_* \odot {\bP}  \mathbf{1}_{n+1} = {\rm Proj}_{{\ker B}}^{\mathbf{KL}}(\bP \mathbf{1}_{n+1})$.
 
Restricting the minimization over the $\bP'$ to be over those of the form $\bP' = {\rm diag}(\bg)\bP$, we find
\begin{align*}
\bg_* =& \argmin_{\substack{ \bg  \\
 {\rm diag}(\bg) \bP \mathbf{1}_{n+1} \in \ker \bB }}   \sum_{r,s} \bg_r \bP_{rs} \log(\bg_r) - \bg_r \bP_{rs} .\end{align*}
Restricting to those $\bg$ of the form $\bg = \bw/\bP \mathbf{1}_{n+1}$ we obtain
 \begin{align*}
\bw_* =& \argmin_{\substack{\bw  \\ \bw \in \ker \bB}}    \sum_r \bw_r \log \left( \frac{\bw_r}{\sum_s \bP_{rs}}\right) -   \bw_r   =  {\rm Proj}_{\ker \bB}^{\mathbf{KL}}(\bP \mathbf{1}_{n+1})
\end{align*}
 \end{proof}

Now, we turn to the proof of Theorem \ref{Bregman-Sinkhorn equivalence}.

\begin{proof}[Proof of Theorem \ref{Bregman-Sinkhorn equivalence}]
Assume $\bP^{2l} =  {\rm diag}(\bu^l) \tilde{\bK} {\rm diag}(\bv^l)$ for some $l \geq 0$, which holds by assumption for $l=0$. 
Then $\bP^{2l} \mathbf{1}_{n+1} = \bu^l \odot \tilde{\bK} \bv^l$, so Proposition \ref{Prop Form of projection onto matrices with first marginal in ker B part 2} gives
\begin{align*}
\bP^{2l+1} = {\rm diag}\left( \frac{ {\rm Proj}_{{\rm ker} \bB}^{\mathbf{KL}}(\bu^l \odot \tilde{\bK} \bv^l)}{\bu^l \odot \tilde{\bK} \bv^l}\right) {\rm diag}(\bu^l) \tilde{\bK} {\rm diag}(\bv^l) = {\rm diag}(\bu^{l+1}) \tilde{\bK} {\rm diag}(\bv^l).
\end{align*}
By using the explicit expression for the \textbf{KL} projection onto the subspace with prescribed second marginal \cite[Proposition 1]{Be15}, we likewise have 
\begin{align*}
\bP^{2l+2} =& \bP^{2l+1} {\rm diag}\left( \frac{\bb}{(\bP^{2l+1})^T \mathbf{1}_{m+L}}\right) = {\rm diag}(\bu^{l+1}) \tilde{\bK} {\rm diag}(\bv^l) {\rm diag} \left( \frac{\bb}{\bv^l \odot \tilde{\bK}^T \bu^{l+1}} \right) \\
=& {\rm diag}(\bu^{l+1}) \tilde{\bK} {\rm diag}(\bv^{l+1}).
\end{align*}
\end{proof}

Next, we  justify our choice of initialization for our iterative method by showing that, if our prior information coincides with the optimum value, the iterations remain at the optimizer.

\begin{proposition}
Let $\bP_*$ denote the minimizer of equation \eqref{minimization problem discrete entropy regularized in Bregman/Sinkhorn form}, and let  $\bsigma^0 \in \mathbb{R}^L_+$ be the associated true data, $ \bsigma^0_k := (\mathbf{P}_{*})_{m+k,n+1}$ for $ k = 1, \dots ,L $. 
Let $\bgamma_*$ denote the optimizer of the classical entropy regularized optimal transport problem  between $\bR \bsigma^0$ and $\bnu$, as in equation \eqref{OTproblemforcomputingprior}, which is of the form $\boldsymbol{\gamma}_0^* = {\rm diag}(\bc) \bK {\rm diag}(\bd)$ for some $\bc \in \mathbb{R}^m_+$ and $\bd \in \mathbb{R}^n_+$.

Define $\bu^0 := [\bc ; \bsigma^0]$. Then computing $\bu^l$ and $\bv^l$ according to the generalized Sinkhorn iterations (\ref{Modified Sinkhorn iterations which are equivalent to Bregman}), we have  $\mathbf{P}^{2l} ={\rm diag}(\mathbf{u}^{l}) \tilde{\mathbf{K}}  {\rm diag}(\mathbf{v}^{l})$ satisfies $\bP^l \equiv \bP^0 = \bP_*$ for all $l \geq 0$.
\end{proposition}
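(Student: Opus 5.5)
Since $\bP_*$ is a fixed point of both Bregman projections, the plan is to show that the initialization reproduces $\bP_*$ exactly, i.e. $\bP^0 := {\rm diag}(\bu^0)\tilde{\bK}{\rm diag}(\bv^0) = \bP_*$. Stationarity then follows from Theorem \ref{Bregman-Sinkhorn equivalence}.

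First, identify the structure of $\bP_*$. By Proposition \ref{Peyre-Cuturi 4.3 analog}, the $\bGamma$-block of $\bP_*$ has the form $\bGamma_* = {\rm diag}(\bu_*)\bK{\rm diag}(\bv_*)$. Its first marginal is $\bR\bsigma^0$ and its second marginal is $\bnu$, since $\bP_* \in \mathcal{A}\cap\mathcal{B}$. In addition, $\bsigma$ occupies the $(m+k,n+1)$ entries, and $\tilde{\bK}$ equals $1$ there and $0$ on the off-diagonal blocks. The classical entropic OT problem \eqref{OTproblemforcomputingprior} between $\bR\bsigma^0$ and $\bnu$ (with $\varepsilon_{\rm init}=\varepsilon$, as the statement implicitly requires) has a unique solution characterized by the scaling form with these marginals. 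Hence $\bgamma_* = \bGamma_*$, and $\bc,\bd$ agree with $\bu_*,\bv_*$ up to the usual scalar gauge $\bc = \lambda\bu_*$, $\bd=\bv_*/\lambda$. I would fix that gauge explicitly.

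Second, compute $\bv^0 = \bb/(\tilde{\bK}^T\bu^0)$ as in \eqref{Modified Sinkhorn iterations which are equivalent to Bregman}. For the first $n$ entries this gives $\bnu/(\bK^T\bc)$. Since ${\rm diag}(\bc)\bK{\rm diag}(\bd)$ has second marginal $\bnu$, this equals $\bd$. The last entry is $1/\sum_k\bsigma^0_k = 1$. Therefore $\bP^0 = {\rm diag}(\bu^0)\tilde{\bK}{\rm diag}(\bv^0)$ has top block $\bgamma_* = \bGamma_*$ and corner column $\bsigma^0$, so $\bP^0 = \bP_*$.

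Third, propagate. Since $\bP_*\in\mathcal{A}$ and KL projections fix points already in the set, ${\rm Proj}^{\mathbf{KL}}_{\mathcal{A}}(\bP_*) = \bP_*$. Via Proposition \ref{Prop Form of projection onto matrices with first marginal in ker B part 2}, this means ${\rm Proj}^{\mathbf{KL}}_{\ker\bB}(\bu^0\odot\tilde{\bK}\bv^0) = \bu^0\odot\tilde{\bK}\bv^0$, hence $\bu^1=\bu^0$. Likewise $\bP_*\in\mathcal{B}$ gives $\bv^1=\bv^0$. Induction, using Theorem \ref{Bregman-Sinkhorn equivalence}, yields $\bu^l\equiv\bu^0$, $\bv^l\equiv\bv^0$ and $\bP^{2l}\equiv\bP_*$.

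The main obstacle is the first step. It requires carefully matching the block of $\bP_*$ with the classical Sinkhorn scaling form and handling the scaling gauge, so that $\bv^0$ computed from $\bu^0=[\bc;\bsigma^0]$ returns exactly $\bd$ and $1$ rather than rescaled versions. The argument also uses strict positivity of the entries so that the divisions are well defined, as in Remark \ref{positivityscaling}.
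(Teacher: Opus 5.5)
Your proof is correct. Its mechanics match the paper's: the computation of $\bv^0$, the block form of $\bP^0$, and stationarity from $\bP^0\in\mathcal{A}\cap\mathcal{B}$ (both KL projections fix it, so $\bu^l\equiv\bu^0$ and $\bv^l\equiv\bv^0$).

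The real difference is how you get $\bP^0=\bP_*$. You argue directly. The $\bGamma$-block of $\bP_*$ is the unique entropic plan between its own marginals $\bR\bsigma^0$ and $\bnu$, so it equals $\bgamma_*$ when $\varepsilon_{\rm init}=\varepsilon$. The paper makes that assumption tacitly too, since it identifies the top block of $\tilde{\bK}$ with $\bK$. The paper instead shows only that $\bP^0$ is a fixed point, then invokes convergence of the generalized Sinkhorn iterations to the optimum (Remark \ref{rate of convergence}).

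Your route is the more self-contained one. Being a fixed point in $\mathcal{A}\cap\mathcal{B}$ holds for $\bP^0$ built from \emph{any} probability vector $\bsigma^0$, so fixedness alone says nothing about optimality. Moreover, Bregman projections onto affine sets started at $\bP^0$ converge to the KL projection of $\bP^0$ onto $\mathcal{A}\cap\mathcal{B}$. That projection equals $\bP_*$ only when $\log\bu^0$ lies in the range of $\bB^T$ modulo constants on the two row blocks. Checking this condition for $\bu^0=[\bc;\bsigma^0]$ requires exactly what your first step supplies: the optimality of $\bsigma^0$ and the identification $\bgamma_*=\bGamma_*$, via Proposition \ref{Peyre-Cuturi 4.3 analog}. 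So the paper's appeal to convergence implicitly rests on the argument you give explicitly.

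One simplification: the gauge issue you call the main obstacle does not arise. For any pair $(\bc,\bd)$ representing $\bgamma_*$, the marginal constraint gives $\bd=\bnu/(\bK^T\bc)$. Also, ${\rm diag}(\bc)\bK{\rm diag}(\bd)$ is invariant under $(\bc,\bd)\mapsto(\lambda\bc,\bd/\lambda)$. So your second step goes through as written, with no normalization of $\bc$ and no comparison of $\bc$ with $\bu_*$.
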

\begin{proof}
Observe
$$
\bP^0 := {\rm diag}(\bu^0) \tilde{\bK} {\rm diag}(\bv^0)
= \begin{bmatrix} {\rm diag}(\bc) & \\ & {\rm diag}(\bsigma^0) \end{bmatrix}
\begin{bmatrix} \bK & \\ & \mathbf{1}_L \end{bmatrix}
{\rm diag} \left( \frac{\begin{bmatrix} \bnu \\ 1 \end{bmatrix}}{\begin{bmatrix} \bK^T & \\ & \mathbf{1}_L^T \end{bmatrix} \begin{bmatrix}\bc \\ \bsigma^0 \end{bmatrix} } \right)
$$
may be simplified as
$$
\begin{bmatrix} {\rm diag}(\bc) \bK {\rm diag}\left(\frac{\bnu}{\bK^T \bc}\right) & \\ & \bsigma^0\end{bmatrix},
$$
and ${\rm diag}(\bc) \bK {\rm diag}(\bnu / \bK^T \bc)$ is precisely $\bgamma_*$ by the required marginal constraint.
Hence 
$\bP^0 \in \mathcal{A} \cap \mathcal{B}$. This shows that $\bP^l \equiv \bP^0 $ for all $l \geq 0$. Finally, the fact that the generalized Sinkhorn iterations converge to optimum (see Remark \ref{rate of convergence}) gives the result. 
\end{proof}

We conclude by proving Proposition \ref{Peyre-Cuturi 4.3 analog}, which shows that the minimizer of the discrete, entropically regularized problem (\ref{new-entropically-regularized-discrete-denoising-problem}) can be expressed in terms of lower dimensional scaling variables.
\begin{proof}[Proof of Proposition \ref{Peyre-Cuturi 4.3 analog}]
The Lagrangian is 
\begin{align*}
\mathcal{L}((\mathbf\Gamma, \bsigma), \mathbf{f}, \mathbf{g}) =& \langle \mathbf\Gamma, \mathbf{C} \rangle 
- \varepsilon \mathbf{H}(\bsigma) - \varepsilon \mathbf{H}(\mathbf\Gamma) \\
&- \left\langle \mathbf{f}, \left[\sum_{j=1}^n \mathbf\Gamma_{ij} - \sum_{k=1}^L \mathbf{R}_{ik} \bsigma_k\right]_i \right\rangle - \left\langle \mathbf{g}, \left[ \sum_{i=1}^m \mathbf\Gamma_{ij} -\bnu_j\right]_j \right\rangle
\end{align*}
where $\mathbf{f} \in \mathbb{R}^m$, $\mathbf{g} \in \mathbb{R}^n$ are the Lagrange multipliers. At stationary points 
$$
0 = \frac{\partial \mathcal{L}}{\partial \mathbf\Gamma_{ij}} = \mathbf{C}_{ij} + \varepsilon \log(\mathbf\Gamma_{ij}) - \mathbf{f}_i - \mathbf{g}_j \text{ and }
0 = \frac{\partial \mathcal{L}}{\partial \bsigma_k} = 
\varepsilon \log(\bsigma_k) + \sum_{i=1}^m \mathbf{R}_{ik} \mathbf{f}_i.
$$
Thus,
$
\mathbf\Gamma_{ij} = e^{\mathbf{f}_i / \varepsilon} e^{-\mathbf{C}_{ij} / \varepsilon} e^{\mathbf{g}_j / \varepsilon}.
$
Setting $\mathbf{u}_i = e^{\mathbf{f}_i/\varepsilon}$, and $\mathbf{v}_j = e^{\mathbf{g}_j/\varepsilon}$ completes the proof.
\end{proof}

\end{document}